\documentclass[aap]{imsart}

\RequirePackage{amsthm,amsmath,amsfonts,amssymb}
\RequirePackage[numbers]{natbib}
\RequirePackage[colorlinks,citecolor=blue,urlcolor=blue]{hyperref}
\RequirePackage{graphicx, color} 
\RequirePackage{comment}

\startlocaldefs
\RequirePackage{dsfont}
\RequirePackage{thmtools}
\RequirePackage{tikz}
\RequirePackage{forest}
\usetikzlibrary{arrows.meta, positioning, calc}
\RequirePackage{float}

\RequirePackage{mathtools}
\RequirePackage{hyperref}
\RequirePackage{cleveref}
\RequirePackage{mathrsfs}
\RequirePackage[english]{babel}
\RequirePackage{csquotes}
\RequirePackage{bm}
\RequirePackage{tikz-cd}
\RequirePackage[dvipsnames]{xcolor} 

\RequirePackage{ulem}
\RequirePackage{enumitem}

\usepackage{tikz}
\usepackage{pgfplots}
\pgfplotsset{compat=1.18}

\theoremstyle{plain}
\newtheorem{thm}{Theorem}
\newtheorem{lm}[thm]{Lemma}
\newtheorem{cor}[thm]{Corollary}
\newtheorem{prop}[thm]{Proposition}

\theoremstyle{definition}
\newtheorem{defn}[thm]{Definition}

\newtheorem{remark}[thm]{Remark}

\numberwithin{equation}{section}

\RequirePackage{xcolor}

\newcommand{\N}{\mathbb{N}}
\newcommand{\Z}{\mathbb{Z}}

\newcommand{\R}{\mathbb{R}}

\newcommand{\e}{\mathrm{e}}
\newcommand{\order}{\mathcal{O}}
\newcommand{\Cf}{\mathcal{C}}

\endlocaldefs

\begin{document}

\begin{frontmatter}
\title{A Model for Fast Mining Blockchains}
\runtitle{A Model for Fast Mining Blockchains}

\begin{aug}
\author[A]{\fnms{Rhys}~\snm{Bowden}\ead[label=e1]{Rhys.Bowden@monash.edu}}

\author[B]{\fnms{Cengiz}~\snm{Gazi}\ead[label=e2]{c.gazi1@student.unimelb.edu.au}}

\author[B]{\fnms{Jan}~\snm{de Gier}\ead[label=e3]{jdgier@unimelb.edu.au}}

\author[B]{\fnms{Peter}~\snm{Taylor}\ead[label=e4]{taylorpg@unimelb.edu.au}}

\address[A]{School of Public Health and Preventive Medicine, Monash University, Australia\printead[presep={,\ }]{e1}}

\address[B]{School of Mathematics and Statistics, University of Melbourne, Victoria 3010, Australia\printead[presep={,\ }]{e2,e3,e4}}

\end{aug}

\begin{abstract}
We study a Markovian model for a blockchain mining process. In the limit where there is an infinite number of miners, the model satisfies a functional law of large numbers due to Barbour and Luczac (2008), leading to an infinite set of coupled ODEs characterising the blockchain heights. We show that the sequence of solutions to this system converges uniformly, to a limiting profile in an appropriate comoving frame. Utilising asymptotic arguments inspired by Brunet and Derrida (1997), we derive an explicit expression for the position of the travelling front, which exhibits a leading order logarithmic Bramson correction in time. For a finite number of miners, we further derive a correction to the travelling speed which is logarithmic in the number of miners. Finally, by application of a computation due to Ablowitz and Zeppetella (1979), we identify that, for a specific choice of the travelling speed, the \textit{Painlev\'e} integrable, travelling wave solution of the Fisher-KPP equation with drift accurately matches our limiting profile. Our results are supported by numerical simulations.
\end{abstract}

\begin{keyword}[class=MSC]
\kwd[Primary ]{60G}
\kwd{Stochastic processes}
\kwd[; secondary ]{60J27}
\kwd{Continuous-time Markov processes on discrete state spaces}
\kwd[; secondary ]{39A60}
\kwd{Applications of difference equations}
\end{keyword}

\begin{keyword}
\kwd{Continuous time Markov process}
\kwd{Differential-difference equation}
\kwd{Fisher-KPP equation}
\end{keyword}

\end{frontmatter}


\section{Introduction}

A blockchain is constructed by miners each of whom works to discover new blocks according to a process that is well-approximated by a Poisson process \cite{peter_blockchains_2018}, with a rate that is proportional to the miner's computational power. Each block that is mined must reference a parent block and that block in turn must reference its parent block all the way back to the original block, the genesis block. The height of a block is the number of ancestor blocks that lie between it and the genesis block, and the length of a branch is determined by the block of maximum height. We shall refer to the longest branch that an individual miner knows about as its {\it local blockchain}. If a miner knows of two or more different branches of the same length then its local blockchain is given by the one that it knew about first. At any given time a miner is working to append a new block to its local blockchain. We say that it is mining \textit{on top of its local blockchain}.

A miner's local blockchain can progress in two ways.
\begin{enumerate}
\item The miner can discover a new block. In this case, it appends the block to its local blockchain and copies the block to the other miners via a peer-to-peer network of nodes. 
\item The miner can receive a block copied from another miner. In this case, the miner will validate the proof-of-work and the transactions in the block. If these are in order and if the height of the newly-received block is greater than the height of the block at the head of its local blockchain, it will acquire previous blocks if necessary and institute the new branch as its local blockchain, That is, it will start mining {\it on top of the new block}. It will also forward the new block to other adjacent nodes.
\end{enumerate}

If there are two or more different branches of the same length present in the network, then at least two different miners must have different local blockchains. In a slow mining blockchain, this situation is resolved when a miner discovers a new block on top of one of the versions and propagates this new block to the rest of the miners. This has a high probability of occurring before any other miner discovers a new block. Our interest in this paper is to consider what will happen in a {\it fast mining} blockchain where the time that it takes to mine a new block is of the same order as the time that it takes to communicate the block to a second miner and for that miner to validate it - we call this the {\it propagation delay}. In this case, it is likely that other blocks will be mined while newly discovered blocks are being propagated and it is not clear that the local blockchains at the different miners will ever be the same.

We build on previous models of the blockchain forking process \cite{peter_blockchains_2015, peter_blockchains_2018} to consider fast mining blockchains. We develop a simple continuous-time Markov chain (CTMC) model for this case to examine whether the community of miners will ever reach consensus. \Cref{prop: consensus positive reccurent} shows that no matter how far the blockchains at the individual miners diverge, the local blockchains will almost surely return to consensus in finite expected time. Furthermore, even as the mining delay becomes so short that consensus is only rarely reached, we demonstrate using a functional law of large numbers that miners typically agree on all blocks except those that lie within a bounded distance of the head of the longest local blockchain.

Specifically, in the limit of infinitely many miners, our fast mining blockchain model satisfies a functional law of large numbers, \cite[Theorem 3.1]{BarbourLuczak_2008}, and hence the limiting miner density profile satisfies a deterministic system of ODEs. The limiting distribution function of miner heights then satisfies the nonlinear differential-difference equation (\Cref{prop:BC})
\begin{equation}\label{eq:BC0}
    \left\{
    \begin{aligned}
    \dot{\varphi}_n(t) &= -\lambda(\varphi_n - \varphi_{n-1}) - \omega \varphi_n(1-\varphi_n), \quad &&n \geq 0,\\
    \varphi_n(0) &= 1, \quad &&n \geq 0,
    \end{aligned}
    \right.
\end{equation}
with boundary condition $\varphi_{-1} \equiv 0$. This system of equations forms the basis of study for the rest of the paper. We note here that our system only includes a first order difference and that a related system with second order difference was studied in \cite{discreteFKPP_BramsonCorrection_2023}.

Convergence of the limiting distribution function profile in the large height and large time limit is given by \Cref{thm: u_n convergence} in \Cref{sec: Convergence to limiting profile}. This result is proved using a comoving frame and technical uniform estimates. This limiting profile is a traveling wave profile with shape defined implicitly by \Cref{thm: u_n convergence}, and admits the universal leading order logarithmic Bramson correction to the wave speed. We demonstrate this via an asymptotic argument originally due to \cite{BrunetDerrida_1997}.  By comparison with a continuum approximation and application of Painlev\'e integrability for a special choice of rates \cite{AblowitzZeppetella_1979} we provide an explicit approximate analytic formula for the shape of the limiting profile. We confirm a good fit with the corresponding solution of \eqref{eq:BC0} by numerical comparison.
 
The convergence of the travelling wave speed of the finite $N$-miner process to the travelling wave speed of the limiting $N=\infty$-miner process is shown to exhibit Brunet-Derrida behaviour, that is a logarithmic correction in $N$ first observed in \cite{BrunetDerrida_1997} for a cut-off modified Fisher-KPP system.

\subsection{Outline of paper} The rest of the paper is organised as follows: In \Cref{sec: A blockchain mining model} we formally define our blockchain mining model and prove consensus is positive recurrent for any finite number of miners. In \Cref{sec: height dynamics}, we show that that the height dynamics of our blockchain model, under a normalised occupation representation, satisfies a functional law of large numbers. We then derive and state the set of equations \eqref{eq:BC} which form the basis for the analysis of the rest of the paper. In \Cref{sec: Convergence to limiting profile}, we show that the sequence of solutions to \eqref{eq:BC} converge uniformly to a limiting profile, in an appropriate comoving frame - this result is stated formally in \Cref{thm: u_n convergence}. In \Cref{sec: speed and shape of limiting profile}, we derive an explicit expression for the position of the travelling front and its leading order logarithmic Bramson correction using an asymptotic analysis \cite{BrunetDerrida_1997}. We then provide an approximate analytic expression for the limiting profile. We obtain this expression using an asymptotic argument that reduces our set of equations \eqref{eq:BC} to a travelling wave Fisher-KPP equation with drift, and then applying a result from \cite{AblowitzZeppetella_1979}, which gives an explicit analytic solution made possible by theory concerning Painlev\'e integrability. In \Cref{sec: shift in speed for finite miners}, we derive an explicit expression for the shift in speed of the travelling wave for a finite number of miners compared to that obtained in the infinite miner limit. Via an application of a perturbation computation from \cite{BrunetDerrida_1997}, this leads to a logarithmic Brunet-Derrida correction.

\section{Model motivation:  Blockchain mining}\label{sec: A blockchain mining model}

\subsection{Our model} Our model of the blockchain for $N$ miners is expressed in terms of a continuous-time Markov chain (CTMC) with states
\begin{equation*}
\bm{B} = (B_1, B_2, \ldots, B_N) \in \mathbb{B}^N, \quad N \geq 2,
\end{equation*}
with each component taking values in the space of positive increasing finite integer sequences with first value equal to $1$, that is
\begin{equation*}
\mathbb{B} := \left\{ B \in \bigcup_{n \geq 1} \N^n \mid 1 = b^1 < b^2 \ldots < b^n \right\},
\end{equation*}
where $b^j$ is the $j^{th}$ sequential component of $B$.

We interpret $B_i$ as representing the sequence of connected blocks of the $i^{th}$ miner, which we call its {\it local blockchain}, and assume all miners begin mining on top of the first block, called the {\it genesis block}, with block label $b^1=1$. We say this local blockchain has $|B_i|$ blocks with distance from the genesis block equal to $|B_i|-1$, which we call  its {\it height}. The space $\mathbb{B}$ represents the space of all possible local blockchains.

\subsubsection{Mining} Each miner is continuously trying to mine the next block, with successes occurring according to a Poisson process with parameter $\lambda>0$. It follows that, at any given time $t \geq 0$, the total number of blocks $L(t) \in \N$, equivalently the label of the most recently mined block, is distributed as
\begin{equation*}
L(t) \sim 1 + \mathrm{Poisson}(N\lambda t).
\end{equation*}

\subsubsection{Block propagation} Each miner continuously communicates their {\it local blockchain} to the other $N-1$ miners according to a Poisson process with parameter $\omega>0$. Whenever such a communication event occurs, with uniform probability $\tfrac{1}{N-1}$ a given one of the other $N-1$ miners receives the communication. In this case, it will perform the necessary proof-of-work tests, and if the newly received blockchain is both valid, and has height greater than the height of the receiving miner's current local blockchain, it will abandon its local blockchain and start mining on top of this newly received local blockchain. We call this a {\it block propagation event}. Any blocks that were in the receiving miner's abandoned local blockchain that are not present elsewhere in the network are called {\it orphan blocks}. They cannot be present in the long-term blockchain. 

To illustrate, if the $j^{th}$ miner sends the communication and the $i^{th}$ miner receives the communication, then we say a block propagation event occurs from miner $j$ to miner $i$, if the height of the local blockchain of miner $j$ is greater than that of miner $i$, that is, $|B_j|>|B_i|$. In this case, we say that miner $j$ has {\it propagated} its local blockchain to miner $i$. If $|B_j| \leq |B_i|$, then miner $i$ ignores the communication from miner $j$ and will continue to mine on its current local blockchain.


\subsubsection{Consensus} We say the network of miners $\bm{B}$ is {\it in consensus} if 
\begin{align}\label{eq: consensus Bi=Bj}
    B_i = B_j, \quad \text{for all $i,j \in [N]$},
\end{align}
that is, all local blockchains are identical.

\subsubsection{Blocktree} There is always a connected sequence of at least one block at the beginning of each local blockchain that every miner agrees on. It follows that each state $\bm{B}$ corresponds to a rooted tree, which we call the {\it Blocktree}.

\subsection{Example: $N=7$ miners} Consider a state $\bm{B}$ where the local blockchains are given by
\begin{align}\label{eq: blocktree example N=7}
    B_i(t) = 
    \begin{cases}
        (1,2,3),  & \text{ if $i=1$}\\
        (1,2,4),  & \text{ if $i=2,5$}\\
        (1,2,3,6),  & \text{ if $i=6$}\\
        (1,2,3,5,8),  & \text{ if $i=3,4,7$}.\\
    \end{cases}
\end{align}

This state corresponds to the blocktree illustrated in \Cref{fig: blocktree}. The longest local blockchain is $(1,2,3,5,8)$, with height equal to $4$, which miners $3,4,7$ know about. The block with label $7$ is not present in any of the local blockchains. This must be because block $8$ was mined on top of block $5$ by one of the three miners $3,4,7$ and then propagated to the other two miners, one of which had previously mined block $7$. Block 7 is thus an example of an orphan block: it does not belong to any local blockchain, and will not be present in the long term blocktree. 

There are many possible transitions out of the blocktree state \eqref{eq: blocktree example N=7}. Every miner is continuously competing to mine the next block with block label $9$, and longer local blockchains are continuously being propagated to miners with shorter local blockchains.

\begin{figure}[H]
\centering

\begin{forest}
    for tree={
        grow'=north,
        draw,
        rectangle,
        minimum size=6mm,
        inner sep=1pt,
        l sep=8mm,
        s sep=20mm,
        edge={-},
        anchor=center
    }
    [1, name=n1, ultra thick, label=left:{\scriptsize {\it (genesis block)}}
      [2, name=n2, ultra thick, edge=ultra thick
        [4, name=n4, ultra thick, edge=ultra thick, label=right:{\scriptsize 2,5}]
        [3, name=n3, ultra thick, edge=ultra thick, label=right:{\scriptsize 1}
          [5, name=n5, ultra thick, edge=ultra thick
            [8, name=n8, ultra thick, edge=ultra thick, label=left:{\scriptsize {\it (block of maximum height)}}, label=right:{\scriptsize 3,4,7}]]
          [7, name=n7, thick, dotted, edge=dashed]
          [6, name=n6, ultra thick, edge=ultra thick, label=right:{\scriptsize 6}
            ] 
        ]
      ]
    ]
\end{forest}

\caption{The blocktree corresponding to \eqref{eq: blocktree example N=7}: The nodes represent blocks with their respective block labels; the numbers next to the nodes represent miners whose local blockchain ends in that node. The block with label $7$ is an orphan block and is no longer considered part of the blocktree.}
\label{fig: blocktree}
\end{figure}

\subsection{Markov generator} We now formally define the Markov dynamics of our blockchain model. The positive integer $b := \max_{i \in [N]} b_i^{|B_i|}$ is the largest block label in $\bm{B}$, $\bm{B} \oplus_i b$ is the state produced by appending the block $b \in \N$ to the $i^{th}$ component of $\bm{B}$, and $\bm{B}^{i\to j}$ denotes the state obtained from $\bm{B}$ by setting the $i^{th}$ component equal to the $j^{th}$ component.

There are infinitely-many consensus states satisfying \eqref{eq: consensus Bi=Bj}. In this section, but not the rest of the paper where block heights will be important, we shall combine all of these states into a single consensus state $\bm{1}_N$. This doesn't affect the dynamics of the Markov chain because all blocks of height less than or equal to the highest labelled block in a consensus blocktree are guaranteed to remain forever. 

With initial condition $\bm{B}(0) = \bm{1}_N$ the blocktree state
\[
\bm{B}(t) := (B_1(t), B_2(t), \ldots, B_N(t)) \in \mathbb{B}^N, \quad t \geq 0, 
\]
evolves as a CTMC with transition rates given by 
\begin{align}
    &q^\mathrm{B}(\: \bm{B} \to \bm{B} \oplus_i (b+1) \:) := \lambda, &&i \in [N] \label{eq: B-process mining transition}\\
    &q^\mathrm{B}(\: \bm{B} \to \bm{B}^{i \to j} \:) := \frac{\omega}{N-1} \mathds{1}_{|B_j|>|B_i|}, &&i,j \in [N], \: i \neq j\label{eq: B-process propagation transition}
\end{align}
for any $\bm{B} \in \mathbb{B}^N$. These two transitions model mining and block propagation dynamics respectively. 

We implement combining all consensus states into a single state by also requiring that
\begin{equation}
q^\mathrm{B}(\: (B, \ldots, B, B_i, B, \ldots, B) \to b\bm{1}_N \:) := \omega \mathds{1}_{|B| > |B_i|},\quad  i \in [N].\label{eq: B-process consensus transition}
\end{equation}
This transition occurs when one of the miners not equal to $i$ communicates their local blockchain to miner $i$. 


\subsection{Consensus dynamics} Our first result is that the consensus state is positive recurrent for the blocktree process $\bm{B}(t)$.

\begin{prop}\label{prop: consensus positive reccurent}
     The consensus state \eqref{eq: consensus Bi=Bj} is positive recurrent for all $2 \leq N < \infty$.
\end{prop}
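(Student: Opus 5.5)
Since the only structure that matters for returning to consensus is how the local blockchains are grouped, I would pass to a reduced chain with finite state space that tracks the relevant combinatorics of $\bm{B}(t)$ and then use a uniform-lower-bound (Doeblin-type) argument. The natural reduction is the partition of the miners $[N]$ into classes of identical local blockchains, together with the ordering of those classes by height. Heights can drift arbitrarily apart, but only their relative order enters the rates \eqref{eq: B-process propagation transition}. So the rate of any transition that merges classes, or sends miners to the top class, depends only on this ordered partition. That gives a process on a finite set $\mathcal{S}$ of ordered set partitions of $[N]$, with ties in height allowed. It need not be Markov with respect to its own filtration; I will not rely on that. I will only use that, from every state of $\bm{B}$, a specific event sequence has probability bounded below by a constant depending only on $N,\lambda,\omega$.

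\textbf{Key step.} Fix any non-consensus state $\bm{B}$ and let $j$ be a miner of maximal height $|B_j|$. Consider the following event: within the next unit of time, no mining event occurs anywhere, and miner $j$ sends its chain successively to every other miner, each of which then adopts it.
\begin{itemize}
\item Each miner $i$ with $|B_i|<|B_j|$ adopts $B_j$ upon receipt.
\item Miners $i$ with $|B_i|=|B_j|$ but $B_i\neq B_j$ do \emph{not} adopt it. This tie case is the main obstacle.
\end{itemize}
To break ties, I would modify the event. First let miner $j$ mine one block, which happens at rate $\lambda$, so that $j$ becomes the unique highest miner. Then let $j$ propagate to each of the other $N-1$ miners in turn while nobody else mines.

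The probability of this event is bounded below independently of $\bm{B}$, for instance by
\[
p_0 \;=\; \Bigl(\tfrac{\lambda}{\lambda N+\omega N}\Bigr)\Bigl(\tfrac{\omega/(N-1)}{\lambda N+\omega N}\Bigr)^{N-1} e^{-c}\;>0,
\]
obtained by bounding the total jump rate of $\bm{B}$ from above by $N\lambda+N\omega$ and asking that the next $N$ jumps be exactly the prescribed ones. Propagations from other miners that happen in between are harmless only if they do not create new branches, so I simply include "the next $N$ jumps are these" in the event. Since the total rate is at most $N(\lambda+\omega)$, each jump has probability at least (its rate)$/(N(\lambda+\omega))$. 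Hence from any state, consensus is reached within $N$ jumps with probability at least $p_0>0$.

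\textbf{Time bounds.} Jumps occur at total rate at least $N\lambda$, since mining is always possible, and at most $N(\lambda+\omega)$. So $N$ jumps take expected time at most $N/(N\lambda)$, uniformly in the state. Splitting time into successive blocks of $N$ jumps gives a geometric number of trials with success probability at least $p_0$. The expected hitting time of $\bm{1}_N$ from any state is therefore at most $(1/p_0)\cdot(1/\lambda)<\infty$, using the strong Markov property at the end of each block.

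\textbf{Conclusion.} Starting from $\bm{1}_N$, the chain leaves at rate $N\lambda$ and then returns in finite expected time. Thus the expected return time to the lumped consensus state is finite, i.e. it is positive recurrent. The only care needed is checking that the lumping transition \eqref{eq: B-process consensus transition} is consistent with the last step of the constructed path, which it is: the final propagation to the last dissenting miner is exactly that transition.
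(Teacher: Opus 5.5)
Your argument is correct and is essentially the paper's: a miner of maximal height mines a new, strictly highest block, which is then pushed to the remaining $N-1$ miners in the next $N-1$ jumps, giving a state-independent lower bound on the probability of reaching consensus within $N$ jumps and hence a geometric number of trials. Your cruder per-step bound $\frac{\omega/(N-1)}{N(\lambda+\omega)}$ is fine; the paper uses $n(N-n)\frac{\omega}{N-1}$ because it lets any holder of the new block transmit, whereas you only let miner $j$ transmit. The ordered-partition reduction and the unexplained factor $e^{-c}$ in $p_0$ are superfluous and can be dropped. Your use of the uniform lower bound $N\lambda$ on the total jump rate to convert the jump count into a finite expected continuous time makes explicit a step the paper leaves implicit.
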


\begin{proof}
    For consensus to occur: (1) a new block must be mined that is higher than any other block, and (2) that block must be propagated to every other miner before any other block is mined at that same height. We will find a lower bound for the probability of this sequence of transitions occurring by considering a more restricted sequence of transitions: (1) a new highest block is mined, and (2) that block is propagated to all other miners before any other blocks are mined or propagated. In this restricted sequence of transitions we are ruling out mining and propagation of blocks other than the one that increased the maximum height of all the local blockchains.

    Consider the probability that the next transition is a mining event that generates a new block. In any given state there are $N$ mining transitions each of rate $\lambda$ and $2\binom{N}{2}$ communication events each of rate $\omega/(N-1)$ at most half of which can result in a block propagation transition. The probability that the next transition is mining a new `highest' block is
    \begin{equation*}
    P_{\text{highest}} \geq \frac{\lambda}{N\lambda + \frac{N\omega}{2}},
    \end{equation*}
    because there is at least one miner mining on the local blockchain of maximum height. If $n$ miners have already received this new highest block then there are $N-n$ miners still yet to receive it. Then the probability that the next transition will be propagation of that block is at least
    \begin{equation*}
    \frac{\frac{\omega}{N-1}n(N-n)}{N\lambda + \frac{N\omega}{2}}.
    \end{equation*}
    Therefore the probability that the next $N-1$ transitions after mining a new highest block result in propagation of that block to the rest of the network is
    \begin{equation*}
    P_{\text{prop}} \geq \prod_{n=1}^{N-1}\frac{\frac{\omega}{N-1}n(N-n)}{N\lambda + \frac{N\omega}{2}} = \frac{\left(\frac{\omega}{N-1}\right)^{N-1} (N-1)!^2}{\left( N\lambda + \frac{N\omega}{2} \right)^{N-1}}.
    \end{equation*}
    Regardless of what state the CTMC is in, there is at least a probability of $P_{\text{highest}}P_{\text{prop}}$ that a new highest block is mined and then propagated to all the other miners before any other blocks are mined. Thus the $N$-step transition probability to consensus from any state is
    \begin{equation*}
    P_{\text{cons}} \geq P_{\text{cons}}^* := \frac{\lambda}{N\lambda + \frac{N\omega}{2}} \frac{\left(\frac{\omega}{N-1}\right)^{N-1} (N-1)!^2}{\left( N\lambda + \frac{N\omega}{2} \right)^{N-1}}.
    \end{equation*}
    This lower bound does not depend on the state of the blockchain (no matter how fragmented the blockchain has become there is still a path to consensus in $N$ steps with a probability of at least $P_{\text{cons}}^*>0$ of occurring). If we think of the chain embedded at every $N$ steps, then it either reaches consensus after the next $N$ steps or it doesn't. Whether it does or not can be regarded as a Bernoulli trial that has a positive probability $P_{\text{cons}}$ of success. It follows that the expected time until the first success is finite and so the chain reaches consensus with probability 1 in finite expected time. Consensus is thus positive recurrent.
\end{proof}

\begin{remark}
    We can also bound the probability of transitioning into the consensus state from above. The probability of mining a new highest block as the next transition could be as high as $1$, but the probability of transmitting it successfully to all other miners prior to another block being mined is at most
    \begin{equation*}
        \prod_{n=1}^{N-1} \frac{n(N-n)\frac{\omega}{N-1}}{N\lambda + n(N-n)\frac{\omega}{N-1}} \leq \prod_{n=1}^{N-1} \frac{(N^2 / 4) \omega}{N(N-1)\lambda + (N^2 / 4) \omega}
        = \left( 1 + \frac{4(N-1)\lambda}{N\omega} \right)^{1-N}\!\!.
    \end{equation*}
    Therefore the consensus state can become very rare even for modest values of the ratio $\lambda/\omega$.
\end{remark}

\section{Height dynamics and the $N \to \infty$ limit}\label{sec: height dynamics} 

In this section and the rest of the paper, our main concern is the evolution of blockchain heights. For this reason, we will no longer collapse all consensus states into a single state via the transition \eqref{eq: B-process consensus transition}. Our Markov model will be defined entirely by the mining process with transition rates \eqref{eq: B-process mining transition} and the propagation process with transition rates \eqref{eq: B-process propagation transition}.

\subsection{Height function} To track the data of local blockchain heights we define the {\it height function} obtained through the mapping $\bm{B} \mapsto \bm{h}(\bm{B})$ defined by
\begin{align*}
    &\bm{h}(\bm{B}) := (|B_1|-1, \ldots, |B_N|-1) \in \N_{0}^N, \quad \bm{B} \in \mathbb{B}^N.
\end{align*}
The height function inherits the initial condition $\bm{h}(0) = \bm{0}_N$ and evolves as a CTMC with transition rates given by
\begin{align}
    &q^\mathrm{h}(\: \bm{h} \to \bm{h} + \bm{\e}_i \:) := \lambda,   &&i \in [N]; \label{eq: h-process mining transition}\\
    &q^\mathrm{h}(\: \bm{h} \to \bm{h} + (h_j-h_i) \bm{\e}_i \:) := \frac{\omega}{N-1} \mathds{1}_{h_i<h_j},   &&i,j \in [N], \: i \neq j, \label{eq: h-process propagation transition}
\end{align}
for any $\bm{h} \in \N_{0}^n$.

\subsubsection{Transition probability} Let $\mathbb{P}_t(\bm{h})$ denote the transition probability of evolving into $\bm{h} \in \N_{0}^N$ after some time $t$ from the initial condition $\bm{h}(0) = \bm{0}_N$. Then one can show that the transition probability function is symmetric
\begin{align*}
&\mathbb{P}_t(h_1, \ldots, h_N) = \mathbb{P}_t(h_{\sigma(1)}, \ldots, h_{\sigma(N)}),   &&\sigma \in S_N, \: (\bm{h},t) \in \N_{0}^N \times \R_{\geq 0},
\end{align*}
where $S_N$ is the set of permutations of $[N]$. Using this symmetry, \eqref{eq: h-process mining transition} and \eqref{eq: h-process propagation transition}, we see that the transition probability function satisfies the system of Kolmogorov forward differential equations
\begin{multline*}
    \frac{d}{dt}\mathbb{P}_t(\bm{h}) =  \lambda \sum_{i=1}^N \left( \mathbb{P}_t(h_1,\ldots,h_i-1,\ldots,h_N) - \mathbb{P}_t(h_1,\ldots,h_N) \right)\\
    + \frac{\omega}{N-1} \sum_{i<j} \left( 2\mathds{1}_{h_i=h_j} \times \left( \sum_{k_i=0}^{h_j-1} \mathbb{P}_t(h_1, \ldots, k_i, \ldots, h_j, \ldots, h_N) \right) -  \mathds{1}_{h_i \neq h_j} \mathbb{P}_t(h_1,\ldots,h_N) \right),
\end{multline*}
subject to the initial and boundary conditions
\begin{align*}
    &\mathbb{P}_0(\bm{h}) = \prod_{i=1}^N\delta_{h_i,0}, &&\mathbb{P}_t(h_1,\ldots,-1_i,\ldots,h_N)=0, \quad i \in [N], \: h_{j \neq i} \in \N_{0}.
\end{align*}

\subsection{Occupation representation}

In order to track aggregated quantities, such as the number of miners at a certain height, it is convenient to define the {\it occupation function} obtained through the mapping $\bm{h} \mapsto \bm{\eta}(\bm{h})$ defined by
\begin{align*}
    \bm{\eta}(\bm{h}) := (\eta_0, \eta_1, \eta_2, \ldots) \in \Omega_N, \quad \eta_i := \sum_{k=1}^N \delta_{i, h_k},  \quad \bm{h} \in \N_{0}^N,
\end{align*}
where $\Omega_N$ is the space of infinite sequences of non-negative integers with normalisation $N$,
\begin{align*}
    \Omega_N := \left\{ \bm{\eta} \in \N_{0}^\infty \: : \: \sum_{i=0}^\infty \eta_i = N \right \}.
\end{align*}
We interpret the components $\eta_i$ to denote the number of miners whose local blockchain has height equal to $i$. Then the occupation function inherits the initial condition $\bm{\eta}(0) = N \bm{\e}_0$ and, from \eqref{eq: h-process mining transition}, \eqref{eq: h-process propagation transition}, $\bm{\eta}(t)$ evolves as a CTMC on $\Omega_N$ with transition rates given by
\begin{align}
    &q^\eta(\bm{\eta} \to \bm{\eta} + \bm{\e}_{i+1} - \bm{\e}_i) := \lambda \eta_i,   &&i \geq 0; \label{eq: eta-process mining transition}\\
    &q^\eta(\bm{\eta} \to \bm{\eta} + \bm{\e}_j - \bm{\e}_i) := \frac{\omega}{N-1} \eta_i \eta_j \mathds{1}_{i<j},   &&i,j \geq 0,\label{eq: eta-process propagation transition}
\end{align}
for any $\bm{\eta} \in \Omega_N$.

For a given state $\bm{\eta}$, let $\bm{\eta}_{max}= \sup\{i:\eta_i>0\}$, which must be finite because $\sum_{i=0}^\infty \eta_i = N$. In the Markov chain $\bm{\eta}(t)$, the rate of increase of $\bm{\eta}_{max}$ is bounded above by a Poisson process with rate $N\lambda$. This Markov chain thus cannot (with positive probability) pass through infinitely-many states in a finite time. That is, it is not explosive. 

\subsubsection{Transition probability} Let $\mathbb{P}_t(\bm{\eta})$ denote the probability that  $\bm{\eta}(t) =  \bm{\eta} \in \Omega_N$ with initial condition $\bm{\eta}(0) = N \bm{\e}_0$. Then, from \eqref{eq: eta-process mining transition}, \eqref{eq: eta-process propagation transition}, $\mathbb{P}_t(\bm{\eta})$ satisfies the Kolmogorov forward differential equations
\begin{multline*}
    \frac{d}{dt}\mathbb{P}_t(\bm{\eta}) =  \lambda \left( \sum_{0 \leq i} (\eta_i+1) \mathbb{P}_t(\bm{\eta} + \bm{\e}_i - \bm{\e}_{i+1}) - N \mathbb{P}_t (\bm{\eta}) \right)\\
    + \frac{\omega}{N-1} \left( \sum_{0 \leq i<j} (\eta_i+1)(\eta_j-1) \mathbb{P}_t(\bm{\eta} + \bm{\e}_i - \bm{\e}_j) -  \left( \sum_{0 \leq i < j} \eta_i \eta_j \right) \mathbb{P}_t(\bm{\eta}) \right),
\end{multline*}
subject to the initial and boundary conditions
\begin{align*}
    &\mathbb{P}_0(\bm{\eta}) = \delta_{\eta_0,N},\\
    &\mathbb{P}_t(0,\ldots,0,(N+1)_i,0,\ldots,0,(-1)_j,0,\ldots) = 0, \quad 0 \leq i < j,\\
    & \mathbb{P}_t(\eta_0, \eta_1,\ldots, (-1)_i,\ldots) = 0, \quad 0 \leq i.
\end{align*}

\begin{remark}
    A spatially continuous analogue of the $\bm{\eta}(t)$-process defined above was described by Baryshnikov and Stolyar \cite{Stolyar_2025}.  They considered a system of $N$ particles moving on the real line $\R$ in which particles make independent jumps at points of a Poisson process with parameter $\lambda$ whose sizes are independent and identically distributed random variables $Z > 0$ with $\mathbb{E}(Z) =1$ and distribution function $J$. In addition, the process has "synchronization" jumps, occurring at unit rate, at which a particle joins a randomly-chosen other particle if the latter happens to be ahead of it.

    Considering the model of \cite{Stolyar_2025} in our context, the random variables $Z$ are equal to 1 with probability 1 and we don't necessarily scale time so that $\omega=1$. The Laplace transform of the jump distribution is $L(s)=e^{-s}$ and the parameter 
    \begin{align*}
        \alpha = \liminf_{x \to \infty} \frac{-\log(1-J(x))}{x}
    \end{align*}
    of \cite{Stolyar_2025} is equal to $\infty$. From equation (7.3) of \cite{Stolyar_2025}, the minimum speed of a travelling wave is given by the minimiser of the function 
    \begin{align*}
        V(\zeta) = \frac{1}{\zeta}\left[e^{\zeta} - \lambda +1\right] 
    \end{align*}
    over $\zeta>0$, which occurs at the point $\zeta^*$ that satisfies the equation $\lambda e^{\zeta^*} (\zeta^*-1) = 1-\lambda$. In our discussion of the speed and shape of the limiting profile in Section 5, we reproduce this result in equation \eqref{eq:gamma_0equation}.
    
     We recommend reading Stolyar's preceding line of work \cite{Stolyar_1996, Stolyar_2023_a, Stolyar_2023_b} and the references therein.

\end{remark}

\subsection{Miner density profiles}

We now have the governing equations to study a large miner limit for our model. Define the {\it finite-miner density profile} obtained through the mapping $\bm{\eta} \mapsto \bm{\theta}^{N}(\bm{\eta})$ defined by
\begin{align*}
    \bm{\theta}^{N}(\bm{\eta}) := (\theta^N_0, \theta^N_1, \theta^N_2, \ldots), \quad \theta^N_i := \frac{\eta_i}{N},  \quad \bm{\eta} \in \Omega_N.
\end{align*}
Note that each $\theta^N_i \in [0,1] \cap \tfrac{1}{N}\N_0 $ and $\sum\nolimits_{i \geq 0} \theta^N_i = 1$. Then the state
\begin{equation*}
\bm{\theta}^N(t) := (\theta^N_0(t), \theta^N_1(t), \ldots),
\end{equation*}
is a random variable that can be thought of as the empirical
probability mass function of local blockchain heights at time $t$, inheriting the initial condition $\bm{\theta}^N(0) = \bm{\e}_0$ and, from \eqref{eq: eta-process mining transition} and \eqref{eq: eta-process propagation transition}, evolving as a CTMC with transition rates given by
\begin{align}
    &q^{\theta}(\bm{\theta}^N \to \bm{\theta}^N + \tfrac{1}{N}\bm{\e}_{i+1} - \tfrac{1}{N}\bm{\e}_i) = \lambda N \theta^N_i,   &&i \geq 0,\label{dd1}\\
    &q^\theta(\bm{\theta}^N \to \bm{\theta}^N + \tfrac{1}{N}\bm{\e}_j - \tfrac{1}{N}\bm{\e}_i) = \frac{\omega}{N-1}  N^2 \theta^N_i  \theta^N_j \mathds{1}_{i<j},   &&i,j \geq 0. \label{dd2}
\end{align}

The CTMC with transition rates \eqref{dd1} and \eqref{dd2} is a density-dependent Markov chain. Using this observation, we can invoke the well-known results of Kurtz, \cite{Kurtz_1980} (for models with finitely many compartments), and Barbour and Luczak, \cite{BarbourLuczak_2008} (for the extension to countably many compartments), to establish a Functional Law of Large Numbers. The details are given in the next section. 

\subsection{Limiting differential equations} 
We aim to approximate, for large $N$, the evolution of the finite $N$-miner $\bm{\theta}^N$-process, defined by \eqref{dd1} and \eqref{dd2}. A natural candidate  approximation is given by the solution to the "average drift" system of ODEs
\begin{equation}
    \left \{
    \begin{aligned}\label{eq: avg drift ODEs}
        \frac{d}{dt}\bm{\tilde{\theta}}^N(t) &= \sum_{i=0}^{\infty} \lambda \tilde{\theta}^N_i(t)(\bm{\e}_{i+1} - \bm{\e}_i) + \sum_{i=0}^{\infty}\sum_{j=i+1}^{\infty} \frac{\omega N}{N-1} \tilde{\theta}^N_j(t)\tilde{\theta}^N_i(t)(\bm{\e}_j - \bm{\e}_i),\\
        \bm{\tilde{\theta}}^N(0) &= \bm{\e}_0.
\end{aligned}
    \right.
\end{equation}


The result below is a special case of \cite[Theorem 3.1]{BarbourLuczak_2008}, due to Barbour-Luczak 2008. With the $\ell_1$-norm $\| \bm{x} \|_1 := \sum_{i \geq 0} |x_i|$ and the $\ell_{11}$-norm defined as $\| \bm{x} \|_{11} := \sum_{i \geq 0} (i+1)|x_i|,$ it quantifies the $\ell_1$-error in the approximation of the stochastic $\bm{\theta}^N$-process with the deterministic solution $\bm{\tilde{\theta}}^N$ of the system of ODEs \eqref{eq: avg drift ODEs}.
\begin{thm}[Barbour-Luczac 2008]\label{thm: FLLN Barbour-Luczac 2008} Suppose there exists a solution $\bm{\tilde{\theta}}^N$ to the system of ODEs \eqref{eq: avg drift ODEs}, on $t \geq 0$, such that $\bm{\tilde{\theta}}^N \in \ell_{11}$. Then, for all $T>0$, there exists a constant $K(T)>0$ such that, as $N \to \infty$,
    \begin{align*}
        \bm{\mathrm{P}} \left[ N^{-1}\sup_{0 \leq t \leq T} \|  \bm{\eta}^N(t) -  N \bm{\tilde{\theta}}^N(t) \|_1 > K(T) N^{-1/2} (\log N)^{3/2} \right] = \order(N^{-1/2}).
    \end{align*}
\end{thm}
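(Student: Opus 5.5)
This statement is a specialisation of \cite[Theorem 3.1]{BarbourLuczak_2008}, so the plan is to verify that theorem's hypotheses for the $\bm{\theta}^N$-process with rates \eqref{dd1}--\eqref{dd2}, rather than to reprove a functional law of large numbers. First I would put the chain in the Barbour--Luczak framework. The normalised process $N^{-1}\bm{\eta}^N$ jumps by $N^{-1}J$, where $J=\bm{\e}_{i+1}-\bm{\e}_i$ (mining) or $J=\bm{\e}_j-\bm{\e}_i$ with $i<j$ (propagation). The corresponding rates are $N g_J(\bm{\theta})$, with $g_{\bm{\e}_{i+1}-\bm{\e}_i}=\lambda\theta_i$ and $g_{\bm{\e}_j-\bm{\e}_i}=\tfrac{\omega N}{N-1}\theta_i\theta_j$. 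The drift $F(\bm{\theta})=\sum_J J g_J(\bm{\theta})$ is then exactly the right-hand side of \eqref{eq: avg drift ODEs}. The factor $N/(N-1)$ is harmless: it lies in $[1,2]$ uniformly, and the comparison is with the $N$-dependent solution $\bm{\tilde{\theta}}^N$.

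Next I would check the structural assumptions of \cite{BarbourLuczak_2008}, in the $\ell_1$/$\ell_{11}$ setting used there. I would split $F=A\bm{\theta}+\tilde F(\bm{\theta})$, with linear part $A\bm{\theta}=\lambda\sum_i\theta_i(\bm{\e}_{i+1}-\bm{\e}_i)$. This $A$ generates a bounded, positivity-preserving semigroup on $\ell_1$; explicitly it is Poisson shifting, $e^{At}$. The nonlinear part $\tilde F$ is quadratic. On the simplex $\{\bm{\theta}\ge 0,\ \|\bm{\theta}\|_1=1\}$ it is Lipschitz in $\ell_1$, since
\[
\|\tilde F(\bm{x})-\tilde F(\bm{y})\|_1\le 2\omega\tfrac{N}{N-1}\,\|\bm{x}-\bm{y}\|_1\,(\|\bm{x}\|_1+\|\bm{y}\|_1).
\]
The jump sizes are bounded by $N^{-1}\cdot 2$ in $\ell_1$. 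The total jump rate is at most $N(\lambda+\omega N/(2(N-1)))$, of order $N$, as the density-dependent scaling requires.

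I would then verify the moment and tail conditions, which are where the $\ell_{11}$ assumption enters. Barbour--Luczak require control of $\sum_J |J|_{11}\, g_J$. For propagation this involves $\sum_{i<j}(j+1)\theta_i\theta_j\le \|\bm{\theta}\|_{11}$, and mining contributes $\lambda\|\bm{\theta}\|_{11}$ as well. These conditions therefore reduce to the finite $\ell_{11}$ norm assumed of $\bm{\tilde{\theta}}^N$, together with an a priori bound on $\|\bm{\theta}^N(t)\|_{11}$. For the latter I would use the non-explosion observation above: the maximal height is dominated by a Poisson$(N\lambda t)$ process, so with high probability $\bm{\eta}_{max}(t)\le CN T$ up to time $T$. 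In fact the mean height grows at rate at most $\lambda+\omega$ per miner, which gives a uniform $\ell_{11}$ bound with exceptional probability $\order(N^{-1/2})$ via Markov's inequality on the martingale part.

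With the hypotheses checked, \cite[Theorem 3.1]{BarbourLuczak_2008} directly yields the $N^{-1/2}(\log N)^{3/2}$ uniform $\ell_1$ error on $[0,T]$, with failure probability $\order(N^{-1/2})$, and the constant $K(T)$ is inherited. The main obstacle I expect is matching the precise form of the Barbour--Luczak assumptions to this chain: in particular, the unbounded number of jump types $J$, and ensuring that the quadratic nonlinearity's Lipschitz constants and the $\ell_{11}$ tail bounds are uniform in $N$. I would handle that by keeping everything on the probability simplex, where the quadratic terms are uniformly bounded, and by truncating heights at level $CNT$ on the high-probability event.
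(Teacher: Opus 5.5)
Your proposal follows the paper's route: the paper likewise identifies the $\bm{\eta}$-chain as a special case of the Barbour--Luczak model (taking $\bar\alpha_{i,j}=\lambda\mathds{1}_{j=i+1}$ and $\alpha_{i,j}(N^{-1}\bm{\eta})=\frac{\omega}{N-1}\eta_j\mathds{1}_{i<j}$), notes non-explosion, asserts that their conditions (2.7)--(2.9) hold by a direct check, and invokes \cite[Theorem 3.1]{BarbourLuczak_2008}; your Lipschitz and jump-size bounds are a fleshed-out version of that check. Your separate a priori bound on $\|\bm{\theta}^N(t)\|_{11}$ is not part of the paper's reduction and can be dropped, which is just as well because one step in it is false: propagation adds drift $\frac{\omega N}{N-1}\sum_{i<j}\theta_i\theta_j(j-i)$ to the mean height, which is bounded only by a multiple of $\|\bm{\theta}\|_{11}$ and not by $\omega$, so that step would need a Gronwall argument.
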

\begin{proof}
    Recall the $\bm{\eta}$-process defined by \eqref{eq: eta-process mining transition} and \eqref{eq: eta-process propagation transition}. This corresponds to a parameter specialisation of the process defined in \cite[Section 2]{BarbourLuczak_2008} which, in their notation, is
    \begin{align*}
        X^N(t) := \bm{\eta}^N(t), \quad \bar{\alpha}_{i,j} := \lambda \mathds{1}_{j = i+1}, \quad
    \alpha_{i,j}(N^{-1}\bm{\eta}^N) := \frac{\omega}{N-1} \eta_j^N \mathds{1}_{i<j}, \quad i \geq 0.
    \end{align*}
    We note that the $\bm{\eta}$-process was shown earlier to be non-explosive and that conditions (2.7)-(2.9) of \cite{BarbourLuczak_2008} are satisfied by a simple check. The result then follows from \cite[Theorem 3.1]{BarbourLuczak_2008}.

\end{proof}
\begin{remark}
    We have not proved the hypothesis of \Cref{thm: FLLN Barbour-Luczac 2008} that there exists a solution to \eqref{eq: avg drift ODEs} which belongs to $\ell_{11}$. This can be demonstrated by application of the Picard-Lindel\"of theorem, which we omit for brevity. However, in \Cref{lm: varphi_n properties} below, we do prove existence and uniqueness for the ODEs characterising the limiting empirical distribution functions $\varphi_n(t)$, defined in \Cref{prop:BC}, which forms the primary focus of this paper.
\end{remark}

\begin{cor}[ODEs for the limiting density profile]\label{prop: limiting equations}
    The components of the limiting miner density profile $\bm{\theta}(t) := \lim_{N \to \infty} \bm{\theta}^{N}(t)$ solve the system of ODEs
    \begin{equation}\label{eq:Ninfty_setODEs}
        \frac{d}{dt}\bm{\theta}(t) = \sum_{i=0}^{\infty} \lambda \theta_i(t)(\bm{\e}_{i+1} - \bm{\e}_i) + \sum_{i=0}^{\infty}\sum_{j=i+1}^{\infty} \omega \theta_j(t)\theta_i(t)(\bm{\e}_j - \bm{\e}_i), 
    \end{equation}
    with the initial condition $\bm{\theta}(0) = \bm{\e}_0$ and normalisation $\sum_{i \geq 0} \theta_i(t) = 1$.
\end{cor}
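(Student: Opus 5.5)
The plan is to combine \Cref{thm: FLLN Barbour-Luczac 2008} with a stability estimate comparing the $N$-dependent drift system \eqref{eq: avg drift ODEs} to the limiting system \eqref{eq:Ninfty_setODEs}. The two systems differ only in the propagation coefficient: $\omega N/(N-1)$ versus $\omega$. The limit $\bm{\theta}(t)=\lim_{N\to\infty}\bm{\theta}^N(t)$ is understood in probability, in $\ell_1$, uniformly on compact time intervals $[0,T]$.

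\textbf{Step 1: the limiting system is well posed.} I would show that \eqref{eq:Ninfty_setODEs} has a unique solution in $\ell_1$ with $\bm{\theta}(0)=\bm{\e}_0$. The most economical route passes to tail sums $\varphi_n=\sum_{i\le n}\theta_i$. In these variables the right-hand side becomes the closed, lower-triangular system \eqref{eq:BC0}: each $\dot\varphi_n$ depends only on $\varphi_n$ and $\varphi_{n-1}$. One can therefore solve it inductively in $n$, as scalar Riccati-type ODEs with bounded coefficients, obtaining global solutions with values in $[0,1]$. Uniqueness comes with this construction. Conservation of $\sum_i\theta_i=1$ follows because the drift's coordinate sums vanish termwise. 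This is legitimate once one checks absolute convergence using $\theta_i\ge 0$ and $\sum_i\theta_i\le 1$, which propagate since the negative terms in each $\dot\theta_i$ are proportional to $\theta_i$. Finally, $\bm{\theta}(t)\in\ell_{11}$: the first moment $\sum_i i\theta_i$ grows at rate at most $\lambda+\omega\sum_j j\theta_j$, so Gronwall bounds it on $[0,T]$.

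\textbf{Step 2: $\tilde{\bm\theta}^N\to\bm\theta$.} Both drifts are Lipschitz in $\ell_1$ on the set of probability vectors. The quadratic term satisfies $\|F(\bm x)-F(\bm y)\|_1\le 4\omega\|\bm x-\bm y\|_1$ when $\|\bm x\|_1,\|\bm y\|_1\le1$. The coefficient difference $\omega/(N-1)$ contributes at most $2\omega/(N-1)$ to the discrepancy. Gronwall then gives
\[
\sup_{t\le T}\|\tilde{\bm\theta}^N(t)-\bm\theta(t)\|_1\le \frac{C\,T\,\mathrm{e}^{CT}}{N-1}.
\]
The same argument with $\omega N/(N-1)$ in place of $\omega$ yields existence of $\tilde{\bm\theta}^N$ in $\ell_{11}$. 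This supplies the hypothesis of \Cref{thm: FLLN Barbour-Luczac 2008} that the earlier remark left unproved.

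\textbf{Step 3: conclude.} By the triangle inequality,
\[
\|\bm\theta^N-\bm\theta\|_1\le N^{-1}\|\bm\eta^N-N\tilde{\bm\theta}^N\|_1+\|\tilde{\bm\theta}^N-\bm\theta\|_1 .
\]
The first term is $O(N^{-1/2}(\log N)^{3/2})$ uniformly on $[0,T]$ with probability $1-O(N^{-1/2})$, and the second is $O(1/N)$. Hence $\bm\theta^N\to\bm\theta$ in probability, uniformly on $[0,T]$. The limit solves \eqref{eq:Ninfty_setODEs} with the stated initial condition and normalisation.

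\textbf{Main obstacle.} I expect the hardest part to be Step 1, making the infinite-dimensional well-posedness rigorous. The double sum must be handled carefully: interchanging $\sum_i\sum_{j>i}$ and verifying mass conservation. The triangular structure of \eqref{eq:BC0} is what I would lean on to avoid a general Picard argument in $\ell_{11}$.
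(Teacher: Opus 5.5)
Your proposal is correct and follows the paper's own argument. The paper's proof is a one-line combination of \Cref{thm: FLLN Barbour-Luczac 2008} with passing the $N$-dependent drift system \eqref{eq: avg drift ODEs} to the limit, and your $\ell_1$ Gronwall estimate (with its $O(1/N)$ rate), the triangle inequality and the first-moment bound for the $\ell_{11}$ hypothesis make that passage explicit. One small ordering point in Step 1: the reduction to the partial sums $\varphi_n$ already uses $\sum_i\theta_i=1$, so if you build $\bm\theta$ from the $\varphi_n$ you should check $\lim_{n\to\infty}\varphi_n(t)=1$ directly. You can do this via the bound \eqref{ineq: 0<1-varphi<f}, or by noting that $M(t):=\lim_n\varphi_n(t)$ solves $\dot M=-\omega M(1-M)$ with $M(0)=1$; the termwise-vanishing argument does not do it, because it presupposes a solution of \eqref{eq:Ninfty_setODEs}.
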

\begin{proof}
    This follows from \Cref{thm: FLLN Barbour-Luczac 2008} after passing the system of ODEs \eqref{eq: avg drift ODEs} to the limit.
\end{proof}

\begin{remark}
    The components of \eqref{eq:Ninfty_setODEs} are given by
    \begin{align}
         \frac{d}{dt}\theta_0(t) &= -\lambda \theta_0(t) -\omega \theta_0(t)(1-\theta_0(t)), \label{eq:theta1} \\
         \frac{d}{dt}\theta_i(t) &= -\lambda\left( \theta_i(t) - \theta_{i-1}(t) \right)
         -\omega \theta_i(t) \left( \sum_{j=i+1}^{\infty}\theta_j(t) - \sum_{j=0}^{i-1}\theta_j(t) \right), \quad i \geq 1,\label{eq:theta2}
    \end{align}
    each with initial condition $\theta_i(0) = \delta_{i,0}$ for $i \geq 0$. 
\end{remark}

\begin{prop}[ODEs for the limiting distribution function]\label{prop:BC}
    Define the limiting empirical distribution function
\begin{equation}\label{eq:BCcdf}
    \varphi_n(t) := \sum_{i=0}^{n} \theta_i(t), \quad  n \in \N_0.
\end{equation}
    Then the components of the sequence $( \varphi_n )_{n \geq 0}$ solve the system of ODEs 
\begin{equation}\label{eq:BC}
    \left\{
    \begin{aligned}
    \dot{\varphi}_n(t) &= -\lambda(\varphi_n - \varphi_{n-1}) - \omega \varphi_n(1-\varphi_n), \quad &&n \geq 0,\\
    \varphi_n(0) &= 1, \quad &&n \geq 0,
    \end{aligned}
    \right.
\end{equation}
with boundary condition $\varphi_{-1} \equiv 0$. Moreover, the $n=0$ solution is given by
\begin{align}\label{eq:BCinitsol}
    \varphi_0(t) = \frac{\lambda+\omega}{\omega+\lambda \mathrm{e}^{(\lambda+\omega)t}}, \quad t \geq 0.
\end{align}
\end{prop}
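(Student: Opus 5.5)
The plan is to sum the component equations \eqref{eq:theta1}--\eqref{eq:theta2} over $i=0,\dots,n$ and rewrite everything in terms of the partial sums $\varphi_n$. The normalisation $\sum_{i\ge0}\theta_i(t)=1$ from \Cref{prop: limiting equations} gives the identities
\[
\sum_{j=i+1}^{\infty}\theta_j = 1-\varphi_i, \qquad \sum_{j=0}^{i-1}\theta_j=\varphi_{i-1},
\]
with $\varphi_{-1}\equiv 0$. Equations \eqref{eq:theta1} and \eqref{eq:theta2} can therefore be written uniformly, for all $i\ge 0$, as
\[
\dot\theta_i = -\lambda(\theta_i-\theta_{i-1}) - \omega\,\theta_i\bigl(1-\varphi_i-\varphi_{i-1}\bigr),
\]
with $\theta_{-1}\equiv 0$.

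First, the linear mining terms telescope. Summing $-\lambda(\theta_i-\theta_{i-1})$ over $0\le i\le n$ gives $-\lambda\theta_n=-\lambda(\varphi_n-\varphi_{n-1})$.

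Second, the quadratic term also telescopes. Using $\theta_i=\varphi_i-\varphi_{i-1}$,
\[
\theta_i(\varphi_i+\varphi_{i-1})=\varphi_i^2-\varphi_{i-1}^2,
\]
so $\sum_{i=0}^n \theta_i(1-\varphi_i-\varphi_{i-1}) = \varphi_n - \varphi_n^2 = \varphi_n(1-\varphi_n)$. Combining the two sums gives \eqref{eq:BC}. The initial condition $\varphi_n(0)=\sum_{i\le n}\delta_{i,0}=1$ for every $n\ge0$ is immediate.

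The main point requiring care is justifying the use of $\sum_j\theta_j=1$ and the rearrangement of the infinite sums in \eqref{eq:theta2}. This is done by invoking the normalisation stated in \Cref{prop: limiting equations}. Since $\theta_j\ge0$, the tail sums converge absolutely, so the manipulation is legitimate. Termwise differentiation of the finite sum $\varphi_n$ needs no justification.

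Finally, for $n=0$ the equation is the autonomous logistic-type Riccati equation
\[
\dot\varphi_0=-\lambda\varphi_0-\omega\varphi_0(1-\varphi_0) = -(\lambda+\omega)\varphi_0+\omega\varphi_0^2, \qquad \varphi_0(0)=1.
\]
Set $y=1/\varphi_0$, which is legitimate while $\varphi_0>0$. This gives the linear equation $\dot y=(\lambda+\omega)y-\omega$ with $y(0)=1$, whose solution is
\[
y(t)=\frac{\omega+\lambda\e^{(\lambda+\omega)t}}{\lambda+\omega}.
\]
This $y$ is positive for all $t\ge0$, so the substitution is valid globally. Inverting $y$ yields \eqref{eq:BCinitsol}, and uniqueness for this scalar locally Lipschitz ODE confirms that it is the solution.
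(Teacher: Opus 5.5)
Your proof is correct and follows essentially the same route as the paper: you sum the $\theta_i$ equations, use the normalisation $\sum_j\theta_j=1$ to write the tail and head sums in terms of $\varphi$, and identify the quadratic term as $\varphi_n^2$. The only differences are cosmetic: you get $\varphi_n^2$ from the telescoping identity $\theta_i(\varphi_i+\varphi_{i-1})=\varphi_i^2-\varphi_{i-1}^2$ where the paper expands $\bigl(\sum_{i\le n}\theta_i\bigr)^2$, and your substitution $y=1/\varphi_0$ for the $n=0$ closed form is equivalent to the paper's separation of variables.
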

\begin{proof}
    See \Cref{proof: BC}
\end{proof}

\begin{remark} 
    The function \eqref{eq:BCcdf} represents the proportion of miners, whose height is $n$ or less at time $t$, in the limiting system with an arbitrarily large number of miners. In the case $\omega=0$, \eqref{eq:BC} has well known solutions 
    \begin{equation*}
        \varphi_n|_{\omega=0}(t) = \e^{-\lambda t} \sum_{k=0}^{n}\frac{(\lambda t)^k}{k!},
    \end{equation*}
    corresponding to the Poisson distribution function, that is $\varphi_n|_{\omega=0}(t) = P(n \leq N_t)$ for $N_t \sim \text{Poisson}(\lambda t)$. The resulting asymptotic behaviour is that the front travels with speed $\lambda$ but spreads out with rate $\sqrt{t}$ (and so no steady state travelling wave is ever reached). In the case $\lambda=0$, the hierarchy collapses and the problem is degenerate. This paper concerns the case $\lambda,\omega>0$. Numerical solutions of \eqref{eq:BC} predict that the sequence $(\varphi_n)_{n \geq 0}$ converges to a non-trivial limiting profile within a moving frame, which we prove in Section~\ref{sec: Convergence to limiting profile}. This implies that the blocktree $\bm{B}(t)$-process with any positive rate of communication $\omega>0$, however small, remains in consensus within a bounded distance of its maximum height for all time.
\end{remark}

\section{Convergence to limiting profile}\label{sec: Convergence to limiting profile}

In this section we prove that the sequence of solutions $(\varphi_n)_{n \geq 0}$, of the system of ODEs \eqref{eq:BC} defined in \Cref{prop:BC}, converges uniformly to a nontrivial limiting profile within an appropriate comoving frame. This convergence result is made precise and formally stated in \Cref{thm: u_n convergence}.

\begin{lm}[Properties of $\varphi_n$]\label{lm: varphi_n properties}
     The distribution functions $(\varphi_n )_{n \geq 0}$ are smooth on $\R_{\geq 0}$ and are the unique set of solutions to \eqref{eq:BC}. Moreover, for all $n \geq 0$ and $t > 0$, 
    \begin{align*}
        \mathrm{(i)}\ 0 < \varphi_n(t) < 1; \quad \mathrm{(ii)}\ \varphi_n(t) - \varphi_{n-1}(t) > 0; \quad \mathrm{(iii)} \ \dot{\varphi}_n(t) < 0;
    \end{align*}
    and, for all $n \geq 0$, 
    \begin{align*}
        \mathrm{(iv)} \ \varphi_n(t) \sim \left( \frac{\lambda^{n-1}(\lambda+\omega)}{n!}t^n + \sum_{k=0}^{n-1} \frac{1}{k!}(\lambda t)^k \right) \mathrm{e}^{-(\lambda+\omega)t} \quad \text{as $t \to \infty$}.
    \end{align*}
\end{lm}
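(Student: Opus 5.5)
The key structural fact is that \eqref{eq:BC} is lower triangular: the equation for $\varphi_n$ involves only $\varphi_n$ and $\varphi_{n-1}$. So every claim will be proved by induction on $n$, starting from $\varphi_{-1}\equiv 0$ and the explicit solution \eqref{eq:BCinitsol}.

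\emph{Existence, uniqueness and smoothness.} Given a smooth $\varphi_{n-1}$ on $\R_{\geq 0}$, the equation for $\varphi_n$ is a scalar Riccati equation with smooth forcing and locally Lipschitz right-hand side. Picard--Lindel\"of gives a unique local smooth solution. Global existence on $\R_{\geq0}$ will follow once I show that $\varphi_n$ stays in $[0,1]$, which is part (i).

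\emph{Parts (i)--(iii).} All three reduce to the same device: write a linear equation for a suitable quantity, with a sign-definite forcing term coming from level $n-1$, and solve it by variation of constants.
\begin{itemize}
\item[(i)] For positivity, note $\dot\varphi_n=-(\lambda+\omega(1-\varphi_n))\varphi_n+\lambda\varphi_{n-1}$. Since $\varphi_{n-1}\ge 0$ and $\varphi_n(0)=1$, variation of constants gives $\varphi_n>0$. For the upper bound, set $u_n=1-\varphi_n$. A short computation gives $\dot u_n=\lambda u_{n-1}-(\lambda+\omega\varphi_n)u_n$ with $u_n(0)=0$. By induction $u_{n-1}>0$ for $t>0$, starting from $u_{-1}\equiv1$. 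Hence
\[
u_n(t)=\int_0^t \lambda u_{n-1}(s)\,\e^{-\int_s^t(\lambda+\omega\varphi_n)}\,ds>0 \quad\text{for } t>0 .
\]
\item[(ii)] Set $d_n=\varphi_n-\varphi_{n-1}$. Subtracting consecutive equations gives $\dot d_n=\lambda d_{n-1}-\bigl(\lambda+\omega(1-\varphi_n-\varphi_{n-1})\bigr)d_n$. The initial data are $d_n(0)=0$ for $n\ge1$ and $d_0=\varphi_0>0$, so the same argument gives $d_n>0$ for $t>0$.
\item[(iii)] Set $v_n=\dot\varphi_n$. Differentiating \eqref{eq:BC} gives $\dot v_n=\lambda v_{n-1}-(\lambda+\omega(1-2\varphi_n))v_n$. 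The initial data are $v_n(0)=-\lambda(1-\varphi_{n-1}(0))=0$ for $n\ge1$, and $v_0<0$ can be read off \eqref{eq:BCinitsol}. Since the forcing $\lambda v_{n-1}$ is negative, $v_n<0$ for $t>0$.
\end{itemize}

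\emph{Part (iv).} First, $\varphi_n\to0$. By (i) and (iii), $\varphi_n$ is decreasing and bounded, so it has a limit $L_n\in[0,1)$. This limit satisfies $\lambda L_{n-1}=L_n(\lambda+\omega(1-L_n))$. With $L_{n-1}=0$ the only roots are $L_n=0$ and $L_n=1+\lambda/\omega>1$, so inductively $L_n=0$.

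Next, set $g_n=\e^{(\lambda+\omega)t}\varphi_n$. Then
\[
\dot g_n=\lambda g_{n-1}+\omega\,\e^{-(\lambda+\omega)t}g_n^2,\qquad g_n(0)=1,
\]
and from \eqref{eq:BCinitsol} we have $g_0\to(\lambda+\omega)/\lambda$. Inductively I aim to show $g_n(t)\sim \frac{\lambda^{n-1}(\lambda+\omega)}{n!}t^n$. This comes from integrating $\lambda g_{n-1}$, provided the quadratic term contributes only $o(t^n)$. The stated formula has the correct leading term, and its lower-order terms are irrelevant for $\sim$.

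\emph{Main obstacle.} The hardest step is controlling the quadratic term in (iv), which needs an a priori polynomial bound on $g_n$. I would bootstrap as follows. First, $\omega\e^{-(\lambda+\omega)t}g_n^2=\omega\varphi_n g_n$. Since $\varphi_n\to0$, for any $\varepsilon>0$ there is $T$ such that $\dot g_n\le\lambda g_{n-1}+\varepsilon g_n$ for $t\ge T$. Gronwall then gives $g_n=O(\e^{\varepsilon t})$, so $\varphi_n=O(\e^{-(\lambda+\omega-\varepsilon)t})$. Feeding this back, the quadratic term is $O(\e^{-(\lambda+\omega-3\varepsilon)t})$, which is integrable. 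Therefore $g_n(t)=1+\lambda\int_0^t g_{n-1}+O(1)$, and the leading asymptotics follow from the induction hypothesis.
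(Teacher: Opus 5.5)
Your proof is correct. It shares the paper's inductive skeleton: the lower-triangular structure, Picard--Lindel\"of at each level, the difference $\delta_n=\varphi_n-\varphi_{n-1}$, and the rescaling $\e^{(\lambda+\omega)t}\varphi_n$ for the asymptotics. The key estimates, however, are obtained by a different device.

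For (i) and (ii) the paper uses a boundary-derivative argument: $\dot\varphi_n=-\lambda(1-\varphi_{n-1})\le 0$ when $\varphi_n=1$, $\dot\varphi_n=\lambda\varphi_{n-1}\ge 0$ when $\varphi_n=0$, and $\dot\delta_n=\lambda\delta_{n-1}\ge 0$ when $\delta_n=0$. As written, this only delivers the non-strict bounds $\varphi_n\in[0,1]$ and $\delta_n\ge 0$. The paper then reads (iii) off the right-hand side of \eqref{eq:BC}, and uses the bound from (i) for global existence without separating local existence from the a priori estimate. Your variation-of-constants representation freezes the nonlinear coefficient along the solution and uses the sign-definite forcings $\lambda\varphi_{n-1}$, $\lambda u_{n-1}$, $\lambda d_{n-1}$ from level $n-1$. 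This gives the strict inequalities for $t>0$ that the statement asserts, and that (iii) actually needs. Your ordering (local existence, then a priori bounds, then global existence) is also the clean one.

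Your separate linear equation for $v_n=\dot\varphi_n$ is correct but unnecessary. Once (i) and (ii) are strict, $\dot\varphi_n=-\lambda\delta_n-\omega\varphi_n(1-\varphi_n)<0$ follows at once.

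For (iv), the paper simply drops the quadratic term (``linearising around $0$''), solves $\dot\varphi_n+(\lambda+\omega)\varphi_n=\lambda\varphi_{n-1}$ exactly, and obtains the recursion $p_n=1+\lambda\int_0^t p_{n-1}$. Two of your steps are exactly what make that heuristic rigorous: the proof that $\varphi_n\to 0$, and the Gronwall bootstrap showing that $\int_0^t\omega\,\e^{-(\lambda+\omega)s}g_n^2\,ds$ converges. Your approach also makes clear that only the leading coefficient $\lambda^{n-1}(\lambda+\omega)/n!$ is determined. The nonlinear term contributes a positive constant to $g_n$, so the lower-order terms of the stated polynomial are not in general the true ones. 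As you note, this is harmless for $\sim$.

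Two small points need fixing.
\begin{enumerate}
\item From \eqref{eq:BC} one gets $\dot u_n=\lambda(u_{n-1}-u_n)+\omega\varphi_n u_n=\lambda u_{n-1}-(\lambda-\omega\varphi_n)u_n$, not $-(\lambda+\omega\varphi_n)u_n$. This is harmless, because positivity of $\int_0^t\lambda u_{n-1}(s)\exp\bigl(-\int_s^t c\bigr)\,ds$ holds for any continuous coefficient $c$.
\item When identifying $L_n$, state explicitly that the limit of the right-hand side of \eqref{eq:BC} must vanish, since otherwise $\varphi_n$ would be unbounded.
\end{enumerate}
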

\begin{proof}
    See \Cref{proof: BCsolproperties}.
\end{proof}

\begin{defn}[Frame fixing]
    To eliminate the translation drift of the profiles $(\varphi_n)_{n \geq 0}$, we perform a frame fixing by pinning the level sets at the origin. For $p \in [0,1]$ the $p$-level set of a given $\varphi_n$ is $\{t:\varphi_n(t) = p\}$. From property (iii) of \Cref{lm: varphi_n properties}, it is clear that the level set of each profile $\varphi_n$ has a single element - hence we will refer to the level set as the hitting time. This defines a comoving frame with respect to the chosen level. 
    
    Fix a level $p \in (0,1)$. From \Cref{lm: varphi_n properties} we define the hitting time and inter-hitting time
    \begin{align}
        t_n &:= \inf\{ t>0 \ | \ \varphi_n(t) = p \} = \varphi_n^{-1}(p),  &&n \geq 0, \label{eq: t_n}\\
        \tau_n &:= t_n - t_{n-1},  &&n \geq 1,   \label{eq: tau_n}
    \end{align}
    and assume $\tau_0:=t_0$. It follows that $\tau_n>0$ for all $n \geq 1$ due to the ordering
    \begin{align*}
        0 < t_0 < t_1 < t_2 < \cdots, 
    \end{align*}
    which follows from property (ii) of \Cref{lm: varphi_n properties}. To track the profile $\varphi_n$, in the comoving frame with respect to level $p$, we define for all $n \geq 0$ the frame shifted profile
    \begin{equation}\label{eq: u_n definition}
        u_n(t) :=
        \begin{cases}
            \varphi_n(t+t_n),  & \text{ if $t \geq -t_n$},\\
            1,  & \text{ if $t < -t_n$},\\
        \end{cases}
    \end{equation}
    which, from \eqref{eq: t_n}, satisfies the normalisation condition $u_n(0) = p$. Furthermore, due to the initial condition $\varphi_n(0)=1$ in \eqref{eq:BC}, $u_n \in \Cf(\R)$ for all $n \geq 0$.
\end{defn}

\begin{thm}\label{thm: u_n convergence}
    There exists $u \in \Cf^1(\R)$ and $\tau \geq 0$ such that
    \begin{align}
    &\lim_{n \to \infty} \sup_{t \in \R} |u_n(t) - u(t)| = 0,\label{eq convthm: lim_nsup_tR|u_n-u|=0}\\
    &\lim_{n \to \infty} \tau_n = \tau, \label{eq convthm: lim tau_n=tau}
    \end{align}
    where $u$ solves the boundary value problem
    \begin{equation}\label{eq convthm: u limit BVP}
    \left\{
    \begin{aligned}
        &\dot{u}(t) = -\lambda(u(t) - u(t+\tau)) - \omega u(t)(1-u(t)),  &&t \in \R, \\
        &\lim_{t \to -\infty} u(t) = 1, \quad \lim_{t \to \infty} u(t) = 0, \quad u(0)=p.
    \end{aligned}
    \right.
    \end{equation}
    Furthermore, $u \in \Cf^\infty(\R)$ and satisfies
    \begin{align}\label{eq convthm: u limit boundedness and monotonicity}
        0<u(t)<1 \quad \text{and} \quad \dot{u}(t) < 0, \quad \text{for all } t \in \R,
    \end{align}
    the limit $\tau$ is independent of $p \in (0,1)$ and is explicitly given by
    \begin{align}
        \tau &= \max_{\alpha \geq \omega} \frac{1}{\alpha} \log \left( \frac{\alpha-\omega}{\lambda} + 1 \right). \label{eq convthm: tau=maxtau}
    \end{align}
\end{thm}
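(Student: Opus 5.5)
The plan is to view \eqref{eq:BC} as a recursion in $n$. Once $\varphi_{n-1}$ is known, $\varphi_n$ solves a scalar Riccati-type ODE forced by $\lambda\varphi_{n-1}$. In the comoving variables this reads
\begin{equation*}
\dot u_n(t) = -\lambda\bigl(u_n(t)-u_{n-1}(t+\tau_n)\bigr)-\omega u_n(t)(1-u_n(t)),
\end{equation*}
valid for $t>-t_n$. So $u_n=\mathcal{T}_{\tau_n}(u_{n-1})$, where $\mathcal{T}_s$ maps a decreasing profile $v$ to the unique solution of the forced ODE with $v(\cdot+s)$ as forcing, normalised so that it passes through $p$ at time $0$.

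\textbf{Step 1: uniform bounds and equicontinuity.} By \Cref{lm: varphi_n properties}, each $u_n$ is decreasing with values in $[0,1]$, and $|\dot u_n|\le \lambda+\omega/4$. Hence $(u_n)$ is uniformly Lipschitz, and Arzel\`a--Ascoli gives locally uniform subsequential limits.

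\textbf{Step 2: control of $\tau_n$.} I will show that $\tau_n$ is bounded above and below, and is eventually monotone. The monotonicity comes from a comparison principle: $\mathcal{T}$ is order preserving, since a larger forcing gives a pointwise larger solution. If $u_{n-1}\ge u_{n-2}$ in an appropriate ordering, this should propagate, so that $(u_n)$, and hence $(\tau_n)$, are monotone in $n$. The base case I would try is to compare $u_1$ with $u_0$ using the explicit formula \eqref{eq:BCinitsol}. For the bounds, I compare with explicit sub- and supersolutions of the form $\varphi_n(t)\approx \min(1,C e^{-\alpha(t-n s)})$. Linearising near $\varphi=0$ gives $\dot\varphi_n\approx -(\lambda+\omega)\varphi_n+\lambda\varphi_{n-1}$. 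An exponential ansatz $e^{-\alpha(t-ns)}$ then requires $\alpha=\lambda+\omega-\lambda e^{\alpha s}$, that is,
\begin{equation*}
s=\frac{1}{\alpha}\log\Bigl(\frac{\alpha-\omega}{\lambda}+1\Bigr).
\end{equation*}
The actual selected delay is the extremal one, matching \eqref{eq convthm: tau=maxtau}; this is the discrete analogue of the minimal KPP speed. Upper and lower barriers built from exponentials, with the nonlinear term handled by $\omega u(1-u)\le\omega u$ and a cutoff correction near the top, should pin down $\lim\tau_n=\tau$. I also expect $\tau$ to be independent of $p$, because the leading edge governs the dynamics.

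\textbf{Step 3: identifying and upgrading the limit.} Pass to the limit in the integral form of the recursion. Using locally uniform convergence and $\tau_n\to\tau$, any subsequential limit $u$ satisfies the delay equation in \eqref{eq convthm: u limit BVP} with $u(0)=p$. It is monotone, so it has limits at $\pm\infty$. These limits must be fixed points of the equation with $u(t+\tau)$ equal to the same limit, and the only such constants are $0$ and $1$. Since $u(0)=p$, the limit must be $1$ at $-\infty$ and $0$ at $+\infty$.

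Uniqueness of the limit, given the value of $\tau$ and the normalisation, then upgrades subsequential convergence to convergence of the full sequence. Monotonicity together with limits $0$ and $1$ at the ends upgrades locally uniform convergence to uniform convergence on $\R$, by a Dini-type argument. Smoothness follows by bootstrapping the delay ODE, which gives $\Cf^\infty$. Strict inequalities follow by a strong maximum principle argument applied to $\dot u$: differentiating the equation shows that $\dot u$ satisfies a linear delay equation with nonnegative coupling, so $\dot u$ cannot vanish at a point without vanishing identically.

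\textbf{Main obstacle.} I expect Step 2 to be the hardest part: proving convergence of $\tau_n$ to the extremal value. This needs both the monotonicity, or at least eventual stabilisation, of $\tau_n$ and the sharp barrier showing that the minimal-speed (maximal-delay) selection occurs. A further difficulty is uniqueness of the travelling wave solving \eqref{eq convthm: u limit BVP} for this $\tau$.
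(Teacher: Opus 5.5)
\textbf{The selection of $\tau$ in Step 2 is done at the wrong equilibrium.} The state $\varphi=0$ is the stable state behind the front, and linearising there selects nothing. If you carry out your computation correctly, the ansatz $\varphi_n=e^{-\alpha(t-ns)}$ in $\dot\varphi_n=-(\lambda+\omega)\varphi_n+\lambda\varphi_{n-1}$ gives $\alpha=\lambda+\omega-\lambda e^{-\alpha s}$ (your exponent has the wrong sign). That is, $s=-\alpha^{-1}\log\bigl(1-\frac{\alpha-\omega}{\lambda}\bigr)$, which increases from $0$ to $\infty$ on $(\omega,\lambda+\omega)$ and has no extremum. Your displayed formula does not follow from your own relation either, since that relation gives $s<0$.

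The delay is selected at the unstable state $\varphi=1$. For $g_n=1-\varphi_n$ one has $\dot g_n=\lambda(g_{n-1}-g_n)+\omega g_n(1-g_n)\le\lambda(g_{n-1}-g_n)+\omega g_n$. This is a cooperative system, and $G_n=e^{\alpha(t-n\tau(\alpha))}$ with $\tau(\alpha)=\alpha^{-1}\log\bigl(\frac{\alpha-\omega}{\lambda}+1\bigr)$ solves it exactly. Since $G_n(0)\ge 0=g_n(0)$, comparison gives $g_n\le G_n$. Evaluating at $t_n$ gives $t_n\ge n\tau(\alpha)+\alpha^{-1}\log(1-p)$ for every $\alpha\ge\omega$.

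Such barriers only control $t_n=t_0+\sum_{i\le n}\tau_i$, that is, Ces\`aro means of $\tau_n$. So they cannot ``pin down $\lim\tau_n$''. Convergence of $\tau_n$ has to be proved first, and only then does $t_n/n\to\tau$ turn the bound into $\tau\ge\tau^*:=\max_{\alpha\ge\omega}\tau(\alpha)$. The paper obtains the reverse inequality not from a barrier but from monotonicity of the limit. For $\tau>\tau^*$, the equation $\alpha=\lambda(e^{\alpha\tau}-1)+\omega$ has no real root $\alpha>0$, which is incompatible with a monotone approach $u(t)\to1$ as $t\to-\infty$.

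\textbf{The monotonicity in $n$ is missing, and as you phrase it, it cannot hold.} Because $u_n(0)=u_{n-1}(0)=p$, distinct profiles are never pointwise ordered. Moreover, $\mathcal T_{\tau_n}$ involves the shift fixed by the renormalisation. So order preservation of $\mathcal T_s$ has nothing to propagate. The correct statement is single crossing: $u_{n-1}<u_n$ on $(0,\infty)$ and $u_n<u_{n-1}$ on $(-t_n,0)$.

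The paper proves this by an intersection-counting induction on $h_{n,s}(t)=u_n(t+s-\tau_n)-u_{n-1}(t)$, simultaneously for all shifts $s>0$. At a zero, $\dot h_{n,s}=\lambda h_{n-1,s}(\cdot+\tau_{n-1})$. Hence an up-crossing of $h_{n,s}$ followed by a down-crossing would force $h_{n-1,s}$ to go from positive to negative, contradicting its unique up-crossing.

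This lemma does three jobs your plan leaves open:
\begin{itemize}
\item It yields a pointwise limit of the whole sequence, so no uniqueness theorem for the boundary value problem is needed. You flag that uniqueness but never resolve it.
\item It guarantees that $u_{n-1}$ and $u_n$ share the same limit, which Step 3 tacitly assumes. Along a subsequence, the recursion only gives $\dot u=-\lambda\bigl(u-\tilde u(\cdot+\sigma)\bigr)-\omega u(1-u)$, with $\tilde u=\lim u_{n_k-1}$ a priori different from $u$.
\item It makes $\dot u_n(0)$ monotone, hence convergent. Then $u_{n-1}(\tau_n)=\lambda^{-1}\bigl(\dot u_n(0)+\lambda p+\omega p(1-p)\bigr)$ converges, and strict monotonicity of $u$ forces $\tau_n\to\tau$.
\end{itemize}
Your inference from monotone $(u_n)$ to monotone $(\tau_n)$ is unsupported, and the paper never needs it. Once these gaps are filled, your remaining steps are sound in outline: the P\'olya-type upgrade to uniform convergence, the identification of the end states, strict monotonicity via the linear delay equation for $\dot u$ with an integrating factor, and the bootstrap for smoothness.
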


The remainder of this section is devoted to the proof of this convergence result, \Cref{thm: u_n convergence}. 

\subsection{Outline of proof of \Cref{thm: u_n convergence}}\label{subsection: outline of proof of convergence theorem} Our proof makes use of four preliminary results:

\begin{enumerate}
    \item In \Cref{lm: u_n properties}, we transfer properties of the profile $\varphi_n$ to the frame shifted profile $u_n$. We also see that $u_n \in \Cf^1(\R)$ if and only if  $n \geq 1$. Hence at times we may only consider the tail sequence $(u_n)_{n \geq 1}$ which is sufficient.
    \item In \Cref{lm: hittingtimes rel compact}, we prove that the sequence of inter-hitting times $(\tau_n)_{n \geq 1}$ is bounded, with an upper bound depending on level $p$; and we prove that the sequence of hitting times $(t_n)_{n \geq 0}$ approaches $\infty$ in the limit of large $n$.
    \item In \Cref{lm: u_n pointwise monotone}, we prove that the frame shifted profile sequence $(u_n)_{n \geq 0}$ is pointwise monotone on $\R$. In fact, we prove a stronger result that gives strict monotonicity on the positive half-line, and eventually strict monotonicity on the negative half-line.
    \item In \Cref{lm: uniform estimates}, we prove multiple uniform estimates for the sequence $(u_n)_{n \geq 0}$ and the sequence of derivatives $(\dot{u}_n)_{n \geq 1}$.
\end{enumerate}

\noindent
A sketch of the proof of \Cref{thm: u_n convergence} is as follows:

\noindent
To prove \eqref{eq convthm: lim_nsup_tR|u_n-u|=0}, we use the pointwise monotone result of \Cref{lm: u_n pointwise monotone} and the uniform bound \eqref{ineq: uniform bounds} in \Cref{lm: uniform estimates} which gives a unique pointwise limit of $(u_n)_{n \geq 1}$ on $\R$. Using the estimate \eqref{ineq: uniform bounds}, in conjunction with the Arzel\`a--Ascoli theorem and a diagonal argument, we obtain a uniformly convergent subsequence of $(u_n)_{n \geq 1}$ on all compact intervals, and combining with the unique pointwise limit we obtain uniform convergence of the entire sequence on compact intervals. We then use the uniform tail estimates \eqref{eq: u_n tail estimate t=+infty} and \eqref{eq: u_n tail estimate t=-infty} of \Cref{lm: uniform estimates} to extend to global uniform convergence on $\R$. This gives the result \eqref{eq convthm: lim_nsup_tR|u_n-u|=0} for $u \in \Cf(\R)$.

To prove \eqref{eq convthm: lim tau_n=tau}, we evaluate the ODE in the shifted frame \eqref{eq: u_n odes} at the origin; which then gives $u_{n-1}(\tau_n)$ equal to $\tfrac{1}{\lambda}\dot{u}_n(0)$ plus a constant in terms of the level $p \in (0,1)$ and $\lambda, \omega>0$. Using the pointwise monotone result of \Cref{lm: u_n pointwise monotone} and the uniform bound \eqref{ineq: uniform bounds} we infer that $\dot{u}_n(0)$ is monotone and bounded and so converges, hence also $(u_{n-1}(\tau_n))_{n \geq 1}$ converges. We then use \Cref{lm: hittingtimes rel compact} to obtain two convergent subsequences of $(\tau_n)_{n \geq 1}$ with assumed different limits. We then use the unique pointwise limit $u \in \Cf(\R)$ and the uniform derivative bound \eqref{ineq: uniform dt upper bound} of \Cref{lm: uniform estimates} to show uniqueness of these limits which gives the result \eqref{eq convthm: lim tau_n=tau}.

To prove \eqref{eq convthm: u limit BVP} and \eqref{eq convthm: u limit boundedness and monotonicity}, we use the result \eqref{eq convthm: lim_nsup_tR|u_n-u|=0} and the uniform bounds in \Cref{lm: uniform estimates} to pass the frame shifted ODE \eqref{eq: u_n odes} to the limit. Using the completeness of $\Cf^1$ on compact intervals and the derivative uniform tail estimates \eqref{eq: u_n^(j) tail estimate t=+-infty} in \Cref{lm: uniform estimates}, we obtain that $u \in \Cf^1(\R)$ which also completes the proof of \eqref{eq convthm: lim_nsup_tR|u_n-u|=0}. The fact that $u \in \Cf^\infty(\R)$ is obtained by a simple induction argument.

To prove \eqref{eq convthm: tau=maxtau}, we bound solutions to \eqref{eq:BC} by an exponential function of $n,t$ and $\tau$ and then evaluate them at the hitting times $t_n$. The exponential function is constructed so as to satisfy the linearised problem of \eqref{eq:BC} in the vicinity of the fixed point $\varphi_n = 1$. Using our inequality on the times $t_n$ we obtain that $\tau$ is not less than the expression in \eqref{eq convthm: tau=maxtau}. We then obtain equality by the monotone condition \eqref{eq convthm: u limit boundedness and monotonicity}.

\subsection{Preliminary estimates}\label{subsection: preliminary estimates} We present the technical tools which we use to prove \Cref{thm: u_n convergence}. The following result, \Cref{lm: u_n properties}, transfers properties of the profile $\varphi_n$ to the frame shifted profile $u_n$. This forms the basis for proving the technical estimates which follow.

\begin{cor}[\textit{Properties of $u_n$}]\label{lm: u_n properties}
    For all $n \geq 0$, the shifted function $u_n$ is smooth on $\R_{> -t_n}$, and the sequence $(u_n)_{n \geq 0}$ is the unique set of solutions to the system of ODEs
    \begin{equation}
    \left \{
    \begin{aligned}\label{eq: u_n odes}
        &\dot{u}_n(t) = -\lambda (u_n(t) - u_{n-1}(t + \tau_n)) - \omega u_n(t) (1 - u_n(t)),  &&n \geq 0, \: t \geq -t_n,\\
        &u_n(-t_n) = 1, &&n \geq 0,
    \end{aligned}
    \right.
    \end{equation}
    with boundary condition $u_{-1}(t) = \mathds{1}_{t<-t_0}$. Moreover, for all $n \geq 0$ and $t > -t_n$, 
    \begin{align*}
        \mathrm{(i)} \: 0 < u_n(t) < 1; \quad \mathrm{(ii)} \: u_n(t) - u_{n-1}(t+\tau_n) > 0; \quad \mathrm{(iii)} \: \dot{u}_n(t) < 0;
    \end{align*}
    and, for all $k \geq 0$,
    \begin{align*}
        \mathrm{(iv)} \: u_n \in \Cf^k(\R) \quad \text{if and only if} \quad n \geq k.
    \end{align*}
\end{cor}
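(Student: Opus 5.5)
The plan is to read everything off \Cref{lm: varphi_n properties} through the change of variables $t \mapsto t + t_n$, and then check the matching across the kink at $t=-t_n$ separately.

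\textbf{The ODE system.} For $t \geq -t_n$ we have $u_n(t) = \varphi_n(t+t_n)$, with $t+t_n \geq 0$. So $\dot u_n(t) = \dot\varphi_n(t+t_n)$. Substituting \eqref{eq:BC} and writing $t + t_n = (t+\tau_n) + t_{n-1}$ gives $\varphi_{n-1}(t+t_n) = u_{n-1}(t+\tau_n)$. This identity needs $t+\tau_n \geq -t_{n-1}$, which is equivalent to $t \geq -t_n$.

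For $n=0$ we use $\tau_0 = t_0$ and the convention $\varphi_{-1}\equiv 0$. We need $u_{-1}(t+t_0) = 0$ for $t \geq -t_0$, and the stated boundary condition $u_{-1}(s) = \mathds{1}_{s<-t_0}$ does not give this. I would read the intended convention as $u_{-1}(s)=0$ for $s\ge 0$, possibly adjusting the indicator so it encodes $\varphi_{-1}\equiv 0$ on the relevant range. The initial condition $u_n(-t_n) = \varphi_n(0) = 1$ is immediate.

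\textbf{Smoothness, uniqueness and (i)--(iii).} Smoothness on $\R_{>-t_n}$ is inherited from smoothness of $\varphi_n$ on $\R_{\ge0}$. Uniqueness follows by inverting the transformation: any solution of \eqref{eq: u_n odes}, shifted back by $t_n$, gives a solution of \eqref{eq:BC}, which is unique by \Cref{lm: varphi_n properties}. Since the $t_n$ are defined from $\varphi_n$, the inversion is consistent. Properties (i)--(iii) are (i)--(iii) of \Cref{lm: varphi_n properties} evaluated at $s = t+t_n > 0$. For (ii) we use the identity $u_{n-1}(t+\tau_n) = \varphi_{n-1}(t+t_n)$ established above.

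\textbf{Regularity (iv).} This is the main point. Away from $t=-t_n$, $u_n$ is smooth: on the left it is constant, and on the right it is a shift of $\varphi_n$. So $u_n\in\Cf^k(\R)$ if and only if the one-sided derivatives of $\varphi_n$ at $0$ vanish up to order $k$. That is, we need $\varphi_n^{(j)}(0)=0$ for $1\le j\le k$, and we need to know the first $j$ with $\varphi_n^{(j)}(0)\neq 0$.

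I would prove by induction on $n$ that $\varphi_n^{(j)}(0)=0$ for $1\le j\le n$ and $\varphi_n^{(n+1)}(0)\neq 0$. For the base case, $\dot\varphi_0(0) = -\lambda \neq 0$ from \eqref{eq:BC} or \eqref{eq:BCinitsol}.

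For the inductive step, set $\psi_n = 1-\varphi_n$. The equation becomes
\begin{align*}
\dot\psi_n = -\lambda(\psi_n - \psi_{n-1}) - \omega\psi_n(1-\psi_n),
\end{align*}
with $\psi_{-1}\equiv 1$ and $\psi_n(0)=0$. Suppose $\psi_{n-1}$ vanishes to order exactly $n$ at $0$. Let $m$ be the vanishing order of $\psi_n$. Comparing orders in the ODE: the left side has order $m-1$, and the right side has order $\min(m, n)$ whenever these differ. This forces $m-1 = n$, i.e. $m = n+1$. Concretely, we get $\psi_n^{(n+1)}(0) = \lambda\,\psi_{n-1}^{(n)}(0)$. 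Iterating gives $\psi_n^{(n+1)}(0) = \lambda^{n+1} \neq 0$, with the base case $\psi_0 = 1-\varphi_0$.

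It follows that $u_n$ has exactly $n$ continuous derivatives at $-t_n$ and that its $(n+1)$-st derivative jumps there. This gives the stated equivalence $u_n\in\Cf^k(\R)\iff n\ge k$.

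The only delicate step is this order-counting induction, together with the bookkeeping for the $n=0$ boundary convention.
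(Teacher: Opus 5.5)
Your proposal is correct and follows the paper's own argument. Smoothness, uniqueness and (i)--(iii) are transferred from \Cref{lm: varphi_n properties} via $t\mapsto t+t_n$, and (iv) reduces to $\varphi_n^{(k)}(0)=0$ for $1\le k\le n$ together with $\varphi_n^{(n+1)}(0)=-\lambda^{n+1}\neq 0$, which your vanishing-order induction establishes.

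There are two slips, neither of which affects the argument. First, the stated convention $u_{-1}(s)=\mathds{1}_{s<-t_0}$ already gives $u_{-1}(s)=0$ for every $s\ge 0$, because $t_0>0$, so no adjustment is needed for $n=0$. Second, the equation for $\psi_n=1-\varphi_n$ is $\dot\psi_n=-\lambda(\psi_n-\psi_{n-1})+\omega\psi_n(1-\psi_n)$. This sign change does not affect your order counting, since the $\psi_n$-terms on the right-hand side still vanish to order at least $m$.
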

\begin{proof}
    Smoothness, uniqueness, and properties (i),(ii), and (iii) follows from \Cref{lm: varphi_n properties} and \eqref{eq: u_n definition}. To prove property (iv), one can verify from \eqref{eq:BC} that $\varphi_n^{(k)}(0) = 0$ if $1 \leq k \leq n$, and $\varphi_n^{(n+1)}(0) = -\lambda^{n+1} \neq 0$. Then combining with \eqref{eq: u_n definition} the result follows.
\end{proof}

\begin{figure}[H]\label{fig: u_n solution in comoving frame}
    \centering
    \begin{tikzpicture}
        \begin{axis}[
            width=0.9\textwidth,
            height=0.4\textwidth,
            xmin=-8, xmax=16,
            ymin=0, ymax=1.05,
            axis y line=middle,
            axis x line=bottom,
            xtick={-8,-4,0},
            xticklabels={$-t_n$,$-t_{n-1}$,0},
            ytick={0.5, 1},
            yticklabels={$p$,1},
            yticklabel style={
            anchor=west,
            xshift=3pt,
            yshift=3pt
        },
            tick label style={font=\small},
            samples=300,
            domain=-8:16,
            smooth,
            thick,
            grid=none,
            clip=false,
        ]
        
            \addplot[
            domain=-4:16,
            blue!0!black,
            line width=1.2pt,
            solid
            ] {
                (1+1.67835*((x+4)/4))*exp(-1.67835*((x+4)/4))
            };
            \node[
            anchor=south,
            font=\normalsize
            ] at (axis cs:-4,1)
                {$u_{n-1}$};

            \addplot[
            domain=-8:16,
            red!0!black,
            line width=1.2pt,
            solid
            ] {
                (1+1.67835*((x+8)/8))*exp(-1.67835*((x+8)/8))
            };
            \node[
            anchor=south,
            font=\normalsize
            ] at (axis cs:-8,1)
                {$u_n$};

            \addplot[
            dotted,
            black!70,
            line width=0.9pt
            ] coordinates {(-8,1) (0,1)};

            \addplot[
            blue!0!black,
            line width=1.2pt,
            solid
            ] coordinates {(-8,1) (-4,1)};

            \addplot[
            dotted,
            black!70,
            line width=0.9pt
            ] coordinates {(-8,0) (-8,1)};
            
            \addplot[
            dotted,
            black!70,
            line width=0.9pt
            ] coordinates {(-4,0) (-4,1)};

            \draw[
                decorate,
                decoration={brace, amplitude=4pt, mirror},
                thick
            ] (axis cs:-8,-0.18) -- (axis cs:-4,-0.18);
            
            \node[
                anchor=north,
                font=\small
            ] at (axis cs:-6,-0.22) {$\tau_n := t_n-t_{n-1}$};

                
        \end{axis}
    \end{tikzpicture}
    \caption{Solutions $u_n$ and $u_{n-1}$ in the comoving frame with respect to a level $p \in (0,1)$. The fact that they intersect only at $t=0$, on the interval $t>-t_n$, is a direct consequence of \Cref{lm: u_n pointwise monotone}.}
    \label{fig:logistic}
\end{figure}

\begin{lm}\label{lm: hittingtimes rel compact} \quad

    \begin{itemize}
        \item [ (i)] For every $p \in (0,1)$, there exists $T_p>0$ such that $0 \leq \tau_n \leq T_p, \quad n \geq 1$.
    \item [(ii)] $t_n \to \infty$ as $n \to \infty$.
    \end{itemize}
\end{lm}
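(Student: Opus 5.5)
The two parts are independent. For part (ii) I would use a comparison argument on the tails $1-\varphi_n$. For part (i) I would use a differential inequality for $\varphi_n$ on the interval after $t_{n-1}$. The lower bound $\tau_n \ge 0$ in part (i) already follows from the ordering $t_{n-1}<t_n$, which comes from property (ii) of \Cref{lm: varphi_n properties}.

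\textbf{Upper bound in (i).} Fix $n\ge 1$. By property (iii) of \Cref{lm: varphi_n properties}, $\varphi_{n-1}$ is decreasing, so $\varphi_{n-1}(t)\le p$ for all $t\ge t_{n-1}$. Substituting this into \eqref{eq:BC} gives, for $t\ge t_{n-1}$,
\begin{equation*}
\dot\varphi_n(t)\le g(\varphi_n(t)),\qquad g(x):=\lambda(p-x)-\omega x(1-x).
\end{equation*}
The function $g$ is convex, since $g''=2\omega>0$. Its endpoint values on $[p,1]$ are $g(p)=-\omega p(1-p)<0$ and $g(1)=-\lambda(1-p)<0$. By convexity, $g(x)\le -c_p$ on $[p,1]$, where $c_p:=\min\{\omega p(1-p),\lambda(1-p)\}>0$.

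As long as $\varphi_n(t)\ge p$ on $[t_{n-1},t]$, we therefore have $\dot\varphi_n\le -c_p$ there. Since $\varphi_n(t_{n-1})<1$ by property (i), $\varphi_n$ must reach the level $p$ before time $t_{n-1}+(1-p)/c_p$. Hence
\begin{equation*}
\tau_n\le T_p:=\frac{1-p}{c_p},
\end{equation*}
and this bound is uniform in $n$. The case $n=1$ is covered because $\varphi_0$ is decreasing, by \eqref{eq:BCinitsol}.

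\textbf{Part (ii).} The sequence $(t_n)$ is strictly increasing, so it suffices to rule out a finite limit $T$. Set $\psi_n:=1-\varphi_n$, with $\psi_{-1}\equiv 1$. Then
\begin{equation*}
\dot\psi_n=\lambda(\psi_{n-1}-\psi_n)+\omega\varphi_n\psi_n\le \lambda\psi_{n-1}-\lambda\psi_n+\omega\psi_n,\qquad \psi_n(0)=0.
\end{equation*}
By Duhamel's formula and induction on $n$, I would show that
\begin{equation*}
\psi_n(t)\le e^{\omega t}\,\mathbb{P}\big(\mathrm{Poisson}(\lambda t)>n\big).
\end{equation*}
The key fact is that the right-hand side is exactly the solution of the linear comparison system with equality in place of the inequality. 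The induction step uses only that the comparison system is cooperative, i.e. its right-hand side is increasing in $\psi_{n-1}$. This induction is the only mildly technical step.

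Now suppose $t_n\uparrow T<\infty$. Since $\varphi_n$ is decreasing and $t_n\le T$, we get $\varphi_n(T)\le\varphi_n(t_n)=p$, so $\psi_n(T)\ge 1-p>0$ for every $n$. But the bound above gives $\psi_n(T)\le e^{\omega T}\,\mathbb{P}(\mathrm{Poisson}(\lambda T)>n)\to 0$ as $n\to\infty$. This is a contradiction, so $t_n\to\infty$.
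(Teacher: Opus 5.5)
Your proof is correct and follows essentially the same route as the paper: part (i) is the same comparison after $t_{n-1}$ with the autonomous equation $\dot f=\lambda(p-f)-\omega f(1-f)$, where your convexity bound $g\le -c_p$ on $[p,1]$ simply makes the paper's uniform $T_p$ explicit, and part (ii) is the same bound of $1-\varphi_n$ by the linearised cooperative system, whose value at fixed $t$ tends to $0$ as $n\to\infty$. One small correction: $e^{\omega t}\,\mathbb{P}(\mathrm{Poisson}(\lambda t)>n)$ is the exact solution of that linear system with boundary datum $e^{\omega t}$ at $n=-1$, not with $\psi_{-1}\equiv 1$; since $1\le e^{\omega t}$ and the system is cooperative, your comparison inequality still holds (the paper's exact solution with datum $1$ gives the comparable bound $(\lambda t)^{n+1}e^{\omega t}/(n+1)!$).
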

\begin{proof}
    
    To prove (i), consider the initial value problem 
    \begin{align}\label{eq: T_p comparison ode}
        \dot{f} = -\lambda (f - p) - \omega f (1 - f), \quad f(t_{n-1})=1, \quad n \geq 1.
    \end{align}
    From \Cref{lm: varphi_n properties} and \eqref{eq: t_n} we deduce that
    \begin{align*}
         \varphi_n(t_{n-1}) \in (p,1), \quad n \geq 1, \quad \text{and} \quad
         \varphi_{n-1}(t) < p, \quad n \geq 1, \: t > t_{n-1}.
    \end{align*}
    Then comparing \eqref{eq:BC} with \eqref{eq: T_p comparison ode}, it follows by the comparison principle for ODEs that
     \begin{align*}
         \varphi_n(t) < f(t), \quad n \geq 1, \: t \geq t_{n-1}.
     \end{align*}
     From \eqref{eq: T_p comparison ode} we infer $\dot{f}(t_{n-1}) = -\lambda (1-p)< 0$ independent of $n \geq 1$, and $\dot{f}<0$ for all $f \in [p,1]$. Hence $f$ reaches $p$ in some finite time $T_p < \infty$ independent of $n$. Then using the comparison inequality above and $\varphi_n(t_n)=p$, it follows that $t_n \leq T_p + t_{n-1}$ for all $n \geq 1$.

     To prove (ii), consider the system of ODEs 
     \begin{align*}
         \dot{f}_n(t) &= (\omega-\lambda)f_n(t) + \lambda f_{n-1}(t), \quad f_n(0)=0, \: f_{-1}(t)=1,
         \end{align*}
         and its solution
         \begin{align*}
         f_n(t) &= \frac{\lambda^{n+1}}{n!}\e^{(\omega-\lambda)t} \int_0^t \e^{-(\omega-\lambda)s} s^n \: ds.
     \end{align*}
     Define $g_n(t):=1-\varphi_n(t)$, then from \Cref{lm: u_n properties} it follows that $\dot{g}_n(t) \leq \dot{f}_n(t)$ on $n,t \geq 0$. Then using the comparison principle of ODEs, we compute
     \begin{align}\label{ineq: 0<1-varphi<f}
         0 \leq g_n(t) \leq f_n(t) \leq \frac{(\lambda t)^{n+1}}{(n+1)!} \e^{\omega t}, \quad n \geq 0, \: t \geq 0,
     \end{align}
     where we used $\e^{(\omega-\lambda)(t-s)} \leq \e^{\omega t}$ on $0 \leq s \leq t$ to bound and evaluate the integral. It then follows that $\lim_{n \to \infty} |\varphi_n(t)| = 1$ for any finite $ t\geq 0$. Since $\varphi_n(t_n)=p$, it follows that the sequence $(t_n)_{n \geq 0}$ cannot remain bounded.
\end{proof}

The following technical estimate, \Cref{lm: u_n pointwise monotone}, is a pointwise monotonicity property for the frame shifted profile sequence $(u_n)_{n \geq 0}$. The estimate \eqref{ineq: w_n>0} concerns the positive half-line and says that, for each $t>0$, the sequence $(u_n(t))_{n \geq 0}$ is strictly increasing. The estimate \eqref{ineq: w_n<0} concerns the negative half-line and says that, for each $t<0$, the sequence $(u_n(t))_{n \geq 0}$ is eventually strictly decreasing. This follows from utilising \Cref{lm: hittingtimes rel compact} which tells us that, for each $t<0$, there exists $N_t \geq 1$ such that $-t_n < t$ for all $n \geq N_t$, that is the tail sequence $(u_n(t))_{n \geq N_t}$ is strictly decreasing on $-t_{N_t} < t < 0$.

\begin{lm}\label{lm: u_n pointwise monotone}
    For all $n \geq 1$, 
    \begin{align}
        0 < u_{n-1}(t) < u_n(t) < p& \quad \text{if $t > 0$},\label{ineq: w_n>0}\\
        p < u_n(t) < u_{n-1}(t) \leq 1& \quad \text{if $-t_n < t < 0$}.\label{ineq: w_n<0}
    \end{align}
\end{lm}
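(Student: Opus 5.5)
We prove a stronger statement that does not depend on the particular shifts $\tau_n$, and then specialise. For $n \geq 1$ and a fixed shift $s > 0$, define
\begin{equation*}
D^{(n)}_s(t) := \varphi_n(t+s) - \varphi_{n-1}(t), \qquad t \geq 0 .
\end{equation*}
\textbf{Claim.} $D^{(n)}_s$ changes sign at most once on $[0,\infty)$, and only from negative to positive. More precisely, there is $t^* \in (0,\infty]$ with $D^{(n)}_s<0$ on $[0,t^*)$ and $D^{(n)}_s>0$ on $(t^*,\infty)$.

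Once the claim is proved, take $s=\tau_n$. Since $\varphi_n(t_n)=\varphi_{n-1}(t_{n-1})=p$, we have $D^{(n)}_{\tau_n}(t_{n-1})=0$, which forces $t^*=t_{n-1}$. Now
\begin{equation*}
u_n(t)-u_{n-1}(t)=D^{(n)}_{\tau_n}(t+t_{n-1}) \quad \text{for } t\geq -t_{n-1},
\end{equation*}
so this difference is positive for $t>0$ and negative for $-t_{n-1}\le t<0$. On $(-t_n,-t_{n-1})$ we have $u_{n-1}=1>u_n$ by \Cref{lm: u_n properties}(i). The remaining bounds $0<u_{n-1}(t)$, $u_n(t)<p$ for $t>0$, and $p<u_n(t)$, $u_{n-1}(t)\le 1$ for $t<0$, follow from strict monotonicity (\Cref{lm: u_n properties}(iii)) together with $u_n(0)=u_{n-1}(0)=p$.

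\textbf{Proof of the claim, by induction on $n$.} Subtracting the equations \eqref{eq:BC} for $\varphi_n(\cdot+s)$ and $\varphi_{n-1}(\cdot)$ gives the linear equation
\begin{equation*}
\dot D^{(n)}_s = -a(t)\,D^{(n)}_s + \lambda E(t),
\end{equation*}
with coefficient and forcing
\begin{align*}
a(t) &:= \lambda+\omega\bigl(1-\varphi_n(t+s)-\varphi_{n-1}(t)\bigr), \\
E(t) &:= \varphi_{n-1}(t+s)-\varphi_{n-2}(t) = D^{(n-1)}_s(t).
\end{align*}
Here $\varphi_{-1}\equiv 0$. The initial value satisfies $D^{(n)}_s(0)=\varphi_n(s)-1<0$ by \Cref{lm: varphi_n properties}(i).

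\emph{Base case $n=1$.} The forcing is $E=\varphi_0(\cdot+s)>0$.

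\emph{Inductive step.} By the induction hypothesis, with the \emph{same} shift $s$, the forcing $E=D^{(n-1)}_s$ is negative before some time $t_E^*$ and positive after it. Working with a general shift $s$ is essential here: the differing inter-hitting times $\tau_n\ne\tau_{n-1}$ would otherwise break the induction.

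\textbf{Sign argument.} Let $t^*$ be the first zero of $D:=D^{(n)}_s$. Approaching from negative values forces $\dot D(t^*)\ge 0$. Since $D(t^*)=0$, the equation gives $\dot D(t^*)=\lambda E(t^*)$, so $E(t^*)\ge0$, hence $t^*\ge t_E^*$ and $E>0$ on $(t^*,\infty)$. Variation of constants with $A(t)=\int_{t^*}^t a$ then gives
\begin{equation*}
D(t)=\lambda e^{-A(t)}\int_{t^*}^t e^{A(r)}E(r)\,dr>0 \qquad \text{for } t>t^*.
\end{equation*}
So there is no second sign change, which completes the induction.

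The main obstacle is exactly the handling of the shifts. Comparing $u_n$ with $u_{n-1}$ directly produces a forcing term $u_{n-1}(t+\tau_n)-u_{n-2}(t+\tau_{n-1})$, whose two shifts differ and which the induction hypothesis does not control. Fixing $s$ and inducting over all $n$ simultaneously resolves this. After that, the only remaining care is the borderline case $E(t^*)=0$, which the integral representation handles, since $E>0$ strictly after its unique zero.
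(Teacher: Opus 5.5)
Your proof is correct and follows essentially the same route as the paper. Your $D^{(n)}_s(\cdot+t_{n-1})$ is exactly the paper's shifted function $h_{n,s}$, and both arguments close the induction on $n$ by fixing the auxiliary shift $s$ and reading off the sign of the forcing $\lambda D^{(n-1)}_s$ at a zero. Your first-zero plus variation-of-constants finish is slightly cleaner than the paper's two-zeros contradiction: it also rules out tangential zeros, and by allowing $t^*=\infty$ it avoids needing the large-$t$ asymptotics of \Cref{lm: varphi_n properties}(iv).
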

\begin{proof}
    From \Cref{lm: u_n properties}, for all $n\geq0$, it follows  that
    \begin{align*}
        0<u_n(t)<p \: \text{ if $t>0$}, \quad \text{and} \quad p<u_n(t)\leq1 \: \text{ if $t<0$}.
    \end{align*}
    Define $w_n(t) := u_n(t) - u_{n-1}(t)$. Then it suffices to show, for all $n \geq 1$, that
    \begin{align*}
        w_n(t)>0 \: \text{ if $t>0$}, \quad \text{and} \quad w_n(t)<0 \: \text{ if $-t_n<t<0$}.
    \end{align*}
    From \eqref{eq: u_n definition} and the initial condition $\varphi_{n-1}(0)=1$, it follows that
    \begin{align*}
        w_n(t) = 
        \begin{cases}
            \varphi_n(t+t_n)-1, & \text{ if $-t_n<t \leq -t_{n-1}$},\\
            \varphi_n(t+t_n) - \varphi_{n-1}(t+t_{n-1}), & \text{ if $t>-t_{n-1}$},
        \end{cases}
    \end{align*}
    and combining with property (i) of \Cref{lm: varphi_n properties}, it follows that
    \begin{align}\label{ineq: w_n<0 small -t}
        w_n(t)<0 \quad \text{if $-t_n<t \leq -t_{n-1}$}.
    \end{align}
    From property (iv) of \Cref{lm: varphi_n properties}, we compute
    \begin{align*}
        w_n(t) \sim \frac{\lambda^{n-2}(\lambda+\omega)}{(n-1)!} \left( \frac{\lambda}{n}\e^{-(\lambda+\omega)\tau_n}(t+t_n)^n - (t+t_{n-1})^{n-1} \right) \e^{-(\lambda+\omega)(t+t_{n-1})}
    \end{align*}
    as $t \to \infty$,
    which is positive for all sufficiently large $t$ and hence
    \begin{align}\label{ineq: w_n>0 large +t}
        w_n(t) > 0 \quad \text{for all sufficiently large $t$}.
    \end{align}
    From \eqref{ineq: w_n<0 small -t} and \eqref{ineq: w_n>0 large +t}, it follows that there must be some $t > -t_{n-1}$ at which $w_n(t)=0$. From \eqref{eq: t_n} and \eqref{eq: u_n definition}, $w_n(0)=0$ for all $n \geq 1$. Therefore, it suffices to show $w_n(0)=0$ is the unique zero of $w_n$ on $t > -t_{n-1}$ for all $n \geq 1$.

    It is convenient for the induction step below to define the shifted function
    \begin{align}\label{eq: h_ns}
        h_{n,s}(t) := u_n(t+s-\tau_n)-u_{n-1}(t), \quad n \geq 1, \: s>0,\: t > -t_{n-1}-s,
    \end{align}
    where $\tau_n$ is defined by \eqref{eq: tau_n}. The introduction of $s$ will be used to prove uniqueness of a zero for any $s>0$. When $s=\tau_n$ this reduces to $h_{n, \tau_n}(t) = w_n(t)$ and, for all $s > 0$, we have
    \begin{align}
        h_{n,s} < 0& \quad \text{if $-t_{n-1}-s<t \leq -t_{n-1}$},\label{ineq: h_ns<0}\\
        h_{n,s} > 0& \quad \text{for all sufficiently large $t$},\label{ineq: h_ns>0}
    \end{align}
    which follows by the same reasoning which led to \eqref{ineq: w_n<0 small -t} and \eqref{ineq: w_n>0 large +t}. This gives existence of a zero of $h_{n,s}$ on the interval $t>-t_{n-1}$. Hence it suffices to show, for all $s > 0$, the shifted function $h_{n,s}$ cannot have more than one zero on the interval $t>-t_{n-1}$.
    
    Differentiating \eqref{eq: h_ns} on the interval $t > -t_{n-1}$, and using \eqref{eq: u_n odes}, we obtain
    \begin{align}\label{eq: dt h_ns}
        \dot{h}_{n,s} = \alpha_{n,s}(t) h_{n,s}(t) + \lambda h_{n-1,s}(t+\tau_{n-1}), \quad t > -t_{n-1}, 
    \end{align}
    where $\alpha_{n,s}(t) := -\lambda-\omega(1-u_n(t+s-\tau_n)-u_{n-1}(t))$. If $n=1$, then \eqref{eq: dt h_ns} simplifies to
    \begin{align*}
        \dot{h}_{1,s}(t) = -\alpha_{1,s} h_{1,s}(t) + \lambda \varphi_0(t+s+t_0), \quad t > -t_0,
    \end{align*}
    where we used \eqref{eq: u_n definition}, and the boundary condition $u_{-1}(t) = \mathds{1}_{t<-t_0}$ in \Cref{lm: u_n properties}. Suppose $x^{1,s} > -t_0$ is a zero of $h_{1,s}$. Then $h_{1,s}(x^{1,s})=0$ and we obtain
    \begin{align*}
        \dot{h}_{1,s}(x^{1,s}) =  \lambda \varphi_0(x^{1,s}+s+t_0),
    \end{align*}
    which is positive for all $s>0$ and $x^{1,s} > -t_0$, by \Cref{lm: varphi_n properties}. It then follows that, for every $s>0$, $\dot{h}_{1,s}$ is positive at all zeros of ${h}_{1,s}$ on the interval $t>-t_0$. Combining with \eqref{ineq: h_ns<0} and \eqref{ineq: h_ns>0}, the zero $x^{1,s}>-t_0$ is unique.
    Choosing $s=\tau_1$ gives $w_1(0)=0$ as the unique zero of $w_1$ on $t>-t_0$.

    We proceed by induction for $n \geq 2$. Assume that, for all $s > 0$, there exists a unique $x^{n-1,s} > -t_{n-2}$ such that $h_{n-1,s}(x^{n-1,s})=0$ and $\dot{h}_{n-1,s}(x^{n-1,s})>0$. Suppose for a contradiction that $x_1^{n,s} < x_2^{n,s}$ are the first and second zeros of $h_{n,s}$ with nonzero derivative, on the interval $t > -t_{n-1}$, respectively. It then follows that
    \begin{align*}
        h_{n,s}(x_1^{n,s}) = h_{n,s}(x_2^{n,s})=0, \quad \dot{h}_{n,s}(x_1^{n,s})>0, \quad \dot{h}_{n,s}(x_2^{n,s})&<0,
    \end{align*}
    where the inequalities follow from \eqref{ineq: h_ns<0}, that is, $h_{n,s}$ must cross the first zero from negative to positive and the second zero from positive to negative. Combining with \eqref{eq: dt h_ns}, we obtain
    \begin{align}
        \dot{h}_{n,s}(x_1^{n,s}) &= \lambda h_{n-1,s}(x_1^{n,s}+\tau_{n-1})>0 \label{eq: dt h_ns(x_ns)},\\
        \dot{h}_{n,s}(x_2^{n,s}) &= \lambda h_{n-1,s}(x_2^{n,s}+\tau_{n-1})<0\label{eq: dt h_ns(y_ns)}.
    \end{align}
    From the induction hypothesis, $h_{n-1,s}$ has a unique zero at $x^{n-1,s}>-t_{n-2}$ at which it crosses from negative to positive. Combining with \eqref{eq: dt h_ns(x_ns)} and \eqref{eq: dt h_ns(y_ns)}, 
    \begin{align*}
        x_2^{n,s} + \tau_{n-1} < x^{n-1,s} < x_1^{n,s}+\tau_{n-1},
    \end{align*}
    which gives a contradiction since $x_1^{n,s}<x_2^{n,s}$ were assumed to be the first and second zeros of $h_{n,s}$ with nonzero derivative on the interval $t>t_{n-1}$. Therefore, for all $s > 0$, $h_{n,s}$ has only one zero with nonzero derivative on the interval $t>t_{n-1}$, whose existence follows from \eqref{ineq: h_ns<0} and \eqref{ineq: h_ns>0}. As above, we may fix $s = \tau_n$ which gives $h_{n,\tau_n}(t) = w_n(t)$, and conclude that $w_n(0)=0$ is the unique zero for $w_n$ on $t > -t_{n-1}$.
\end{proof}

\begin{lm}\label{lm: uniform estimates}
    \begin{align}
        \sup_{n \geq 0} \sup_{t \in \R} u_n(t) &\leq 1, \quad \sup_{n \geq 1} \sup_{t \in \R} |\dot{u}_n(t)| \leq \lambda+\frac{\omega}{4},\label{ineq: uniform bounds}\\
         \sup_{n \geq 0} u_n(t) &= \order(\e^{-\omega t}) \quad \text{as $t \to \infty$}, \label{eq: u_n tail estimate t=+infty}\\
        \sup_{n \geq 0} 1-u_n(t) &= \order(\e^{\omega t}) \quad \text{as $t \to -\infty$}, \label{eq: u_n tail estimate t=-infty}\\
        \sup_{n \geq 1} |\dot{u}_n(t)| &= \order(\e^{-\omega |t|}) \quad \text{as $t \to \pm \infty$}, \label{eq: u_n^(j) tail estimate t=+-infty}
    \end{align}
    and, for all $r>0$, there exists $N_r,C_r>0$ such that
    \begin{align}
    \sup_{n \geq N_r} \sup_{|t| \leq r} \dot{u}_n(t) \leq - C_r.\label{ineq: uniform dt upper bound}
    \end{align}
\end{lm}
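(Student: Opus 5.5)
The bounds in \eqref{ineq: uniform bounds} and the tails follow from the ODE \eqref{eq: u_n odes}, read through property (ii) of \Cref{lm: u_n properties}. That property says the coupling term $\lambda(u_n(t)-u_{n-1}(t+\tau_n))$ is strictly positive and at most $\lambda$. The bound $u_n\le 1$ is immediate from the definition \eqref{eq: u_n definition} and property (i).

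For $n\ge 1$ and $t>-t_n$, both $u_n(t)-u_{n-1}(t+\tau_n)$ and $u_n(1-u_n)$ are nonnegative, with the first at most $1$ and the second at most $1/4$. Hence
\[
|\dot u_n(t)|\le \lambda\bigl(u_n(t)-u_{n-1}(t+\tau_n)\bigr)+\omega u_n(t)(1-u_n(t))\le \lambda+\tfrac{\omega}{4}.
\]
For $t<-t_n$ we have $u_n\equiv 1$, so $\dot u_n=0$. Since $u_n\in\Cf^1(\R)$ for $n\ge1$ by property (iv), the bound holds on all of $\R$. This gives \eqref{ineq: uniform bounds}.

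For the tails I will use logistic comparison in both time directions. Dropping the positive coupling term gives the differential inequality $\dot u_n\le -\omega u_n(1-u_n)$ on $t>-t_n$, with $u_n(0)=p$ for every $n$.
\begin{itemize}
\item Forward in time: the comparison principle gives, for $t\ge0$,
\[
u_n(t)\le \frac{p\,\e^{-\omega t}}{1-p+p\,\e^{-\omega t}}\le \frac{p}{1-p}\,\e^{-\omega t}.
\]
This is uniform in $n$ and proves \eqref{eq: u_n tail estimate t=+infty}.
\item Backward in time: set $g_n=1-u_n$, so that $\dot g_n\ge \omega g_n(1-g_n)$ with $g_n(0)=1-p$. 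For $t<0$ the comparison principle gives $g_n(t)\le G(t)$, where $G$ is the logistic solution with $G(0)=1-p$; explicitly $G(t)\le \frac{1-p}{p}\e^{\omega t}$. For $t\le -t_n$ we have $g_n=0$, so the bound holds trivially there. This proves \eqref{eq: u_n tail estimate t=-infty}.
\end{itemize}

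For the derivative tails \eqref{eq: u_n^(j) tail estimate t=+-infty} I reuse the displayed bound on $|\dot u_n|$.
\begin{itemize}
\item As $t\to+\infty$: the right-hand side is at most $(\lambda+\omega)u_n(t)=\order(\e^{-\omega t})$.
\item As $t\to-\infty$: use $u_n(t)-u_{n-1}(t+\tau_n)\le 1-u_{n-1}(t+\tau_n)$ and $\omega u_n(1-u_n)\le \omega(1-u_n(t))$. By \eqref{eq: u_n tail estimate t=-infty} and the bound $\tau_n\le T_p$ from \Cref{lm: hittingtimes rel compact}(i), these are $\order(\e^{\omega(t+T_p)})$ and $\order(\e^{\omega t})$ respectively, uniformly in $n$.
\end{itemize}

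The step needing the most care is \eqref{ineq: uniform dt upper bound}, because the bound must be uniform in $n$. The plan is to drop the coupling term, which only makes $\dot u_n$ more negative, so that $\dot u_n(t)\le-\omega u_n(t)(1-u_n(t))$. It then suffices to bound $u_n(t)$ uniformly away from $0$ and from $1$ on $[-r,r]$.
\begin{itemize}
\item Choice of $N_r$: by \Cref{lm: hittingtimes rel compact}(ii), pick $N_r$ with $t_{N_r}>r$. Then $[-r,r]\subset(-t_n,\infty)$ for all $n\ge N_r$, since the $t_n$ are increasing.
\item On $[0,r]$: monotonicity of $u_n$ in $t$ and \Cref{lm: u_n pointwise monotone} \eqref{ineq: w_n>0} give $u_0(r)\le u_n(r)\le u_n(t)\le p$. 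Here $u_0(r)>0$.
\item On $[-r,0]$: $p\le u_n(t)\le u_n(-r)$, and \eqref{ineq: w_n<0} gives $u_n(-r)\le u_{N_r}(-r)$ for $n\ge N_r$. Moreover $u_{N_r}(-r)<1$ by property (i), since $-r>-t_{N_r}$.
\end{itemize}
Hence $u_n(1-u_n)\ge c_r:=\min\{u_0(r)(1-p),\,p(1-u_{N_r}(-r))\}>0$ on $[-r,r]$ for all $n\ge N_r$, and we may take $C_r=\omega c_r$.
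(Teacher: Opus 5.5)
Your proposal is correct and follows the paper's proof essentially step for step. It uses the same bound on $|\dot u_n|$ from properties (i)--(ii), the same logistic comparison $\dot u_n\le-\omega u_n(1-u_n)$ with $u_n(0)=p$ in both time directions, the same use of $\tau_n\le T_p$ for the derivative tail at $-\infty$, and the same reduction of \eqref{ineq: uniform dt upper bound} to keeping $u_n$ uniformly away from $0$ and $1$ on $[-r,r]$ via \Cref{lm: u_n pointwise monotone}. The only cosmetic difference is at the last step: you bound $u_n(1-u_n)$ below by the products $u_0(r)(1-p)$ and $p(1-u_{N_r}(-r))$, whereas the paper evaluates $-\omega f(1-f)$ at the endpoints $u_{N_r}(\pm r)$ using convexity; both are valid.
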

\begin{proof}
    See \Cref{proof: uniform estimates}.
\end{proof}

\subsection{Proof of \Cref{thm: u_n convergence}}\label{subsection: proof of convergence theorem} We now have the technical machinery to prove \Cref{thm: u_n convergence}. Since our results concern the limiting behaviour of the sequence of shifted profiles $(u_n)_{n \geq 0}$, it suffices to consider the tail sequence $(u_n)_{n \geq 1}$. This allows utilisation of all the estimates in \Cref{lm: uniform estimates}.  We prove each line in the order as stated:

 \subsubsection{Proof of \eqref{eq convthm: lim_nsup_tR|u_n-u|=0}} From \Cref{lm: u_n pointwise monotone}, for every $t \in \R$, the sequence $(u_n(t))_{n \geq 1}$ is monotone and bounded. Hence the pointwise limit exists
 \begin{align}\label{eq: u(t) pointwise limit}
     u(t) := \lim_{n \to \infty} u_n(t), \quad t \in \R.
 \end{align}
 
From \Cref{lm: uniform estimates}, specifically \eqref{ineq: uniform bounds}, the sequence $(u_n)_{n \geq 1}$ is uniformly bounded and uniformly equicontinuous on $\R$. Applying the Arzel\`a--Ascoli theorem, every subsequence of $(u_n)_{n \geq 1}$ admits a further subsequence converging uniformly on a given compact subset of $\R$. A standard diagonal argument iterates this scheme and produces a single subsequence which converges uniformly on all compact subsets of $\R$; for example, letting $r=1$, there exists a subsequence $A_r \subseteq \N$ such that $(u_a)_{a \in A_r}$ converges uniformly on $[-r,r] \subset \R$. Then pass to a further subsequence $A_{r+1} \subseteq A_r$ such that $(u_a)_{a \in A_{r+1}}$ converges uniformly on $[-r-1,r+1] \subset \R$. Proceeding iteratively, the limit of this procedure gives a subsequence $A_\infty \subseteq \N$ such that $(u_a)_{a \in A_\infty}$ converges uniformly on all compact subsets of $\R$.

The pointwise limit \eqref{eq: u(t) pointwise limit} identifies a unique limit for all uniformly convergent subsequences of $(u_n)_{n \geq 1}$. Therefore, for all $r>0$, 
\begin{align}\label{eq: compact uniform convergence}
    \lim_{n \to \infty} \sup_{|t| \leq r}|u_n(t)-u(t)| = 0,
\end{align}
where $u$ is the pointwise limit defined in \eqref{eq: u(t) pointwise limit}. This gives us local uniform convergence on $\R$. We now use the uniform tail estimates in \Cref{lm: uniform estimates} to extend \eqref{eq: compact uniform convergence} to global uniform convergence on $\R$. From \eqref{eq: u_n tail estimate t=+infty} and \eqref{eq: u_n tail estimate t=-infty}, there exists $C,R>0$ such that
\begin{align*}
    \sup_{t>R}|u_n(t)-u(t)| &\leq \sup_{t>R}|u_n(t)| + \sup_{t>R}|u(t)| < 2C\e^{-\omega R}, \quad n \geq 1,\\
    \sup_{t<-R}|u_n(t)-u(t)| &\leq \sup_{t<-R}|1-u_n(t)| + \sup_{t<-R}|1-u(t)| < 2C\e^{-\omega R}, \quad n \geq 1.
\end{align*}
Let $\varepsilon>0$ and choose $R>0$ sufficiently large such that $2C\e^{-\omega R}<\tfrac{\varepsilon}{3}$. From \eqref{eq: compact uniform convergence}, choose $N>0$ sufficiently large such for all $n \geq N$, $\sup_{|t| \leq R}|u_n(t)-u(t)| < \tfrac{\varepsilon}{3}$. Combining the three estimates and the triangle inequality, we compute
\begin{align*}
    \sup_{t \in \R}|u_n(t)-u(t)| \leq \sup_{t<-R}|u_n(t)-u(t)| + \sup_{|t| \leq R}|u_n(t)-u(t)| + \sup_{t>R}|u_n(t)-u(t)| < \varepsilon,
\end{align*}
for sufficiently large $R>0$ and all sufficiently large $n$. The result \eqref{eq convthm: lim_nsup_tR|u_n-u|=0} then follows. Moreover, since $(u_n)_{n \geq 1} \subset \Cf^1(\R)$ (due to property (iv) of \Cref{lm: u_n properties}) and convergence is uniform, the limit \eqref{eq: u(t) pointwise limit} is $u \in \Cf(\R)$. As a byproduct of the proof of \eqref{eq convthm: u limit BVP} below, we will obtain that $u \in \Cf^1(\R)$.

\subsubsection{Proof of \eqref{eq convthm: lim tau_n=tau}} Let $q_n := \dot{u}_n(0)$, $n \geq 1$. Evaluating \eqref{eq: u_n odes} at $t=0$, we define
    \begin{align}\label{eq: s_n := unm1 at t=0}
        y_n := u_{n-1}(\tau_n) = \tfrac{1}{\lambda}(q_n + \lambda p + \omega p(1-p)), \quad n \geq 1.
    \end{align}
    From \Cref{lm: u_n pointwise monotone} and \Cref{lm: uniform estimates}, it follows that $(q_n)_{n \geq 1}$ is monotone and bounded, and therefore converges. The monotonicity follows from a simple argument. For example, for the right-sided derivative,
    \begin{align*}
        \dot{u}_n(0) = \lim_{t \to 0^+} \frac{u_n(t)-u_n(0)}{t} \geq \lim_{t \to 0^+} \frac{u_{n-1}(t)-u_{n-1}(0)}{t} = \dot{u}_{n-1}(0),
    \end{align*}
    where we used \eqref{ineq: w_n>0} of \Cref{lm: u_n pointwise monotone} and $u_n(0)=u_{n-1}(0)=p$ to obtain the inequality. The left-sided derivative argument is similar but instead uses \eqref{ineq: w_n<0} of \Cref{lm: u_n pointwise monotone}. Combining with \eqref{eq: s_n := unm1 at t=0}, it then follows that $(y_n)_{n \geq 1}$ converges to some limit depending on $p$, hence
    \begin{align}\label{eq: sbar limit}
        y^p := \lim_{n \to \infty} y_n.
    \end{align}
    Recall the pointwise limit \eqref{eq: u(t) pointwise limit}. Then from \eqref{ineq: uniform dt upper bound}, it satisfies
    \begin{align}\label{ineq: pointwise limit ordering}
        p > u(s) > u(t) > 0 \quad \text{for all } 0 < s < t.
    \end{align}
    From \Cref{lm: hittingtimes rel compact} and the Bolzano--Weierstrass theorem, it follows that $(\tau_n)_{n \geq 1}$ is relatively compact in $[0,T_p]$ for some $T_p>0$. Let $M_1, M_2 \subset \N_1$ be two subsequences such that 
     \begin{align*}
         \tau_m \longrightarrow \tau^{M_1} \in [0,T_p] \: \text{ along $M_1$} \quad \text{and} \quad \tau_m \longrightarrow \tau^{M_2} \in [0,T_p] \: \text{ along $M_2$}.
     \end{align*}
     Then passing \eqref{eq: s_n := unm1 at t=0} to the limit along $M_1$ and $M_2$, and using \eqref{eq: sbar limit} and \eqref{eq: u(t) pointwise limit}, we obtain
     \begin{align*}
         y^p = u(\tau^{M_1}) \quad \text{and} \quad y^p = u(\tau^{M_2}).
     \end{align*}
    Combining with \eqref{ineq: pointwise limit ordering} we may invert $u$ and therefore conclude $\tau^{M_1} = \tau^{M_2}$. Hence all subsequential limits are unique and the result \eqref{eq convthm: lim tau_n=tau} then follows.

\subsubsection{Proof of \eqref{eq convthm: u limit BVP}}
Let $u$ and $\tau$ be defined by \eqref{eq convthm: lim_nsup_tR|u_n-u|=0} and \eqref{eq convthm: lim tau_n=tau}, and define 
\begin{align}\label{eq: v(t) = u ode}
    v(t) := - \lambda(u(t) - u(t+\tau)) - \omega u(t) (1-u(t)), \quad t \in \R.
\end{align}
Then $v \in \Cf(\R)$ since $u \in \Cf(\R)$, which was established in the proof of \eqref{eq convthm: lim_nsup_tR|u_n-u|=0}. Consider the half-line $t \geq -t_n$. Then the ODE \eqref{eq: u_n odes} is defined on this half-line, and using the triangle inequality, we compute
\begin{align}\label{ineq: |dot u_n - v| for t>=-t_n}
    |\dot{u}_n(t)-v(t)| \leq &\lambda |u_n(t)-u(t)| + \lambda|u_{n-1}(t+\tau_n)-u(t+\tau)| \\
    &+ \omega|1-u_n(t)-u(t)|\cdot|u_n(t)-u(t)| \nonumber\\
    \leq &(\lambda+\omega) |u_n(t)-u(t)| + \lambda|u_{n-1}(t+\tau_n)-u(t+\tau)| \nonumber\\
    \leq &(\lambda+\omega)|u_n(t)-u(t)| + \lambda|u_{n-1}(t+\tau)-u(t+\tau)| \nonumber\\
    &+ \lambda|u_{n-1}(t+\tau_n)-u_{n-1}(t+\tau)| \nonumber\\
    \leq &(\lambda+\omega)|u_n(t)-u(t)| + \lambda|u_{n-1}(t+\tau)-u(t+\tau)| \nonumber \\
    &+ \lambda \left(\lambda+\frac{\omega}{4}\right) |\tau_n - \tau|, \quad t \geq -t_n, \: n \geq 2, \nonumber
\end{align}
where we used the uniform bound \eqref{ineq: uniform bounds} of \Cref{lm: uniform estimates} for the second inequality; addition and subtraction of $u_{n-1}(t+\tau)$ and the triangle inequality for the third inequality; and the mean value theorem and the uniform bound \eqref{ineq: uniform bounds} the the fourth inequality (valid for $n \geq 2$).

Now consider the other half-line $t<-t_n$. Then $\dot{u}_n \equiv 0$ on this half-line, due to \eqref{eq: u_n definition}. From the uniform estimate \eqref{eq: u_n tail estimate t=-infty}, there exists $C_1,R_1>0$ such that $1-u(t) \leq C_1\e^{\omega t}$ for all $t \leq -R_1$; and $C_2,R_2>0$ such that $1-u(t+\tau) \leq C_2\e^{\omega (t+\tau)}$ for all $t \leq -R_2-\tau$. Then take $C:= \max\{C_1, C_2\}$ and $R_\tau:= \max \{ R_1, R_2+\tau\}$. From \Cref{lm: hittingtimes rel compact}, there exists $N_{R_\tau}>0$ such that $-t_n<-R_\tau$ for all $n \geq N_{R_\tau}$. Using these facts we compute from \eqref{eq: v(t) = u ode} that
\begin{align}\label{ineq: sup|dot u_n - v| for t<-t_n}
    \sup_{t<-t_n}|\dot{u}_n(t)-v(t)|
    &\leq (\lambda+\omega) \sup_{t<-t_n}|1 - u(t)| + \lambda \sup_{t<-t_n}|1-u(t+\tau)| \\
    &\leq (\lambda+\omega) \sup_{t<-t_n} C\e^{\omega t} + \lambda \sup_{t<-t_n}C\e^{\omega(t+\tau)} \nonumber\\
    &= C(\lambda+\omega+\lambda \e^{\omega \tau})\e^{-\omega t_n}, \quad n \geq N_{R_\tau}, \nonumber
\end{align}
where we used the addition and subtraction of $1$ inside the $\lambda$-factor of \eqref{eq: v(t) = u ode}, and the uniform bound \eqref{ineq: uniform bounds} inside the $\omega$-factor of \eqref{eq: v(t) = u ode}, to obtain the first inequality.

Then combining the half-line inequalities \eqref{ineq: |dot u_n - v| for t>=-t_n} and \eqref{ineq: sup|dot u_n - v| for t<-t_n}, we compute
\begin{align*}
    \sup_{t \in \R}|\dot{u}_n(t) - v(t)| &\leq \sup_{t < -t_n}|\dot{u}_n(t) - v(t)| + \sup_{t \geq -t_n}|\dot{u}_n(t) - v(t)|\\
    &\leq C(\lambda+\omega+\lambda \e^{\omega \tau})\e^{-\omega t_n} + \lambda \left(\lambda+\frac{\omega}{4}\right) |\tau_n - \tau|\\
    &+ (\lambda+\omega)\sup_{t \geq -t_n}|u_n(t)-u(t)| + \lambda\sup_{t \geq -t_n}|u_{n-1}(t+\tau)-u(t+\tau)|,
\end{align*}
for $n \geq \max\{N_{R_\tau},2\}$. Passing to the limit $n \to \infty$ we have $t_n \to \infty$, due to \Cref{lm: hittingtimes rel compact}. Hence the first term on the right-hand-side vanishes and, using \eqref{eq convthm: lim tau_n=tau}, the second term also vanishes. The last two terms are bounded above by extending the supremum from $t \geq -t_n$ to $t \in \R$ and, using \eqref{eq convthm: lim_nsup_tR|u_n-u|=0}, the last two terms also vanish. We therefore obtain
 \begin{align*}
    \lim_{n \to \infty} \sup_{t \in \R} |\dot{u}_n(t) - v(t)| = 0.
\end{align*}
 The standard $\Cf^1$ completeness argument \cite{Rudin_1976} gives $\dot{u}(t) = v(t)$ on compact subsets of $\R$. Combining with the uniform tail estimate \eqref{eq: u_n^(j) tail estimate t=+-infty}, and the same argument that extended local uniform convergence \eqref{eq: compact uniform convergence} to global uniform convergence \eqref{eq convthm: lim_nsup_tR|u_n-u|=0}, it follows that
\begin{align*}
    \lim_{n \to \infty} \sup_{t \in \R}|\dot{u}_n(t)-\dot{u}(t)| = 0.
\end{align*}
An immediate corollary is that $u$ solves the ODE \eqref{eq convthm: u limit BVP} and $u \in \Cf^1(\R)$. We verify the boundary conditions in \eqref{eq convthm: u limit BVP}: The right boundary condition follows from the uniform tail estimate \eqref{eq: u_n tail estimate t=+infty}, and the left boundary condition follows from the uniform tail estimate \eqref{eq: u_n tail estimate t=-infty}. Passing $u_n(0)=p$ to the limit gives the normalisation $u(0)=p$.

\subsubsection{Proof of \eqref{eq convthm: u limit boundedness and monotonicity} and $u \in \Cf^\infty(\R)$}
\begin{itemize}
    \item Boundedness follows from \Cref{lm: u_n pointwise monotone}: Passing \eqref{ineq: w_n>0} and \eqref{ineq: w_n<0} to the limit gives
    \begin{align*}
        0<u(t):=\lim_{n \to \infty} u_n(t) \leq p \quad \text{if } t>0, \quad \text{and} \quad p\leq u(t):=\lim_{n \to \infty} u_n(t) < 1 \quad \text{if } t<0,
    \end{align*}
    then combining with $u(0)=p$ gives the result.
    \item Strict monotonicity follows from \eqref{ineq: uniform dt upper bound}: For any $t \in \R$, choose $r_t>|t|$, then \eqref{ineq: uniform dt upper bound} gives existence of $C_{r_t}>0$ such that $\dot{u}(t) := \lim_{n \to \infty} \dot{u}_n(t) \leq -C_{r_t}$.
    \item Smoothness follows from the ODE of \eqref{eq convthm: u limit BVP}: We already know that $u \in \Cf^1(\R)$ and solves the boundary value problem \eqref{eq convthm: u limit BVP}. Repeated differentiation of \eqref{eq convthm: u limit BVP} gives $u^{(k+1)}$ in terms of lower order derivatives. The result is then obtained by an induction argument.
\end{itemize}

\subsubsection{Proof of \eqref{eq convthm: tau=maxtau} and $\tau$ independent of $p$}

We first prove independence. Consider two shifted frames with limiting profiles $u^{p_1}$ and $u^{p_2}$ with respect to levels $p_1,p_2 \in (0,1)$ such that $p_1<p_2$. The ODE of \eqref{eq convthm: u limit BVP} is translation invariant, hence there exists $s^{p_1,p_2}>0$ such that $u^{p_2}(t + s^{p_1,p_2}) = u^{p_1}(t), \: t \in \R$. Plugging this into the $u^{p_1}$ ODE of \eqref{eq convthm: u limit BVP} evaluated at $t=-s^{p_1,p_2}$ and subtracting the $u^{p_2}$ ODE of \eqref{eq convthm: u limit BVP} evaluated at $t=0$, we obtain $u^{p_2}(\tau^{p_2}) = u^{p_2}(\tau^{p_1})$. Since $u^{p_2}$ is strictly decreasing on $\R$, due to the uniform bound \eqref{ineq: uniform dt upper bound}, it follows that $\tau^{p_1}=\tau^{p_2}$.

We now prove \eqref{eq convthm: tau=maxtau}. Let $\tau$ be defined by \eqref{eq convthm: lim tau_n=tau} and set $\tau^*$ equal to the right-hand-side of \eqref{eq convthm: tau=maxtau}. We first prove $\tau \geq \tau^*$. If we linearise the limiting ODE \eqref{eq convthm: u limit BVP} near the fixed point $u(-\infty)=1$ we obtain the linear differential equation
\begin{align}\label{eq: limit u linear problem}
    \dot{\tilde{u}}(t) = -\lambda (\tilde{u}(t) - \tilde{u}(t+\tau)) - \omega(1 - \tilde{u}(t)), \quad t \in \R.
\end{align}
With $\tilde{u}(-\infty)=1$ this equation has solution $\tilde{u}(t) = 1 - A \e^{\alpha t}$ with characteristic equation
\begin{align}\label{eq: tau alpha characteristic equation}
    \alpha = \lambda (\e^{\alpha \tau} - 1) + \omega, \quad \text{Re}(\alpha) \geq \omega.
\end{align}
The linearly predicted inter-hitting time is then 
\begin{align}\label{eq: tau(alpha)}
    \tau(\alpha) := \frac{1}{\alpha} \log \left( \frac{\alpha-\omega}{\lambda} + 1 \right), \quad \text{Re}(\alpha) \geq \omega.
\end{align}
which is positive on $\alpha>\omega$, satisfies $\tau(\omega) = \tau(\infty) = 0$ and has a unique global maximum defined by \eqref{eq convthm: tau=maxtau}.

We now construct an exponential bounding function. Recall the system of ODEs \eqref{eq:BC} and define the change of variable $g_n = 1 - \varphi_n$. We obtain
\begin{equation}\label{eq: g_n ODE}
    \left\{
    \begin{aligned}
        &\dot{g}_n = \lambda(g_{n-1} - g_n) + \omega g_n(1-g_n), && n \geq 0, \: t \geq 0,\\
        &g_n(0) = 0, &&n \geq 0.
    \end{aligned}
    \right.
\end{equation}
Define the exponential bounding function $G_n(t) := \e^{\alpha (t - n \tau(\alpha))}$ with $\tau(\alpha)$ defined by \eqref{eq: tau(alpha)}. Differentiating $G_n$ and using \eqref{eq: tau alpha characteristic equation}, we obtain the linear differential equation 
\begin{equation}\label{eq: G_n ODE}
    \left\{
    \begin{aligned}
        &\dot{G}_n = \lambda(G_{n-1} - G_n) + \omega G_n, && n \geq 0, \: t \geq 0,\\
        &G_n(0) = \e^{-\alpha n \tau(\alpha)}, &&n \geq 0.
    \end{aligned}
    \right.
\end{equation}
Then by comparing \eqref{eq: g_n ODE} with \eqref{eq: G_n ODE} and restricting to $\alpha \in \R_{\geq \omega}$, it follows by the comparison principle for ODEs that $g_n(t) \leq  G_n(t)$ for all $n,t \geq 0$. Evaluating this inequality on the sequence of hitting times $(t_n)_{n \geq 0}$, we obtain $1-p \leq \e^{\alpha (t_n - n \tau(\alpha))}$, which is equivalently
\begin{align}\label{eq: t_n >= n tau(alpha) + Cp}
    t_n \geq n \tau(\alpha) + \alpha^{-1} \ln (1-p), \quad n \geq 0, \: \alpha \in \R_{\geq \omega}.
\end{align}
We thus conclude that $t_n/n \ge \tau^*$ where $\tau^*$ is that maximum of $\tau(\alpha)$ over all $\alpha\ge \omega$. Because $t_n = t_0 + \sum_{i=1}^n \tau_i$ by virtue of the definitions \eqref{eq: t_n} and \eqref{eq: tau_n}, we conclude that $t_n / n \to \tau$ with $\tau$ defined by \eqref{eq convthm: lim tau_n=tau}, and therefore $\tau \geq \tau^*$.

We now prove $\tau \leq \tau^*$. The characteristic equation \eqref{eq: tau alpha characteristic equation} does not have real roots if $\tau > \tau^*$. For complex solutions of $\tau$ we would obtain an exponential mode with oscillating amplitude of the linear problem \eqref{eq: limit u linear problem}. This leads to a contradiction since the solution of the nonlinear problem \eqref{eq convthm: u limit BVP} satisfies the monotone property \eqref{eq convthm: u limit boundedness and monotonicity}, and hence cannot be asymptotic to an oscillating profile.

\section{Speed and shape of limiting profile}\label{sec: speed and shape of limiting profile}

In the previous section we showed that the sequence of solutions $(\varphi_n)_{n \geq 0}$, of the system of ODEs \eqref{eq:BC} in \Cref{prop:BC}, converges uniformly to a nontrivial limiting profile in an appropriate comoving frame. This was the result of \Cref{thm: u_n convergence}. In this section we study the travelling wave speed and the shape of the limiting profile, also called the travelling wave shape.

A travelling wave solution of \eqref{eq:BC} with speed $v>0$ is a solution of the form $\Phi_v(z) := \varphi_x(t)$, where $z := x-vt$, which solves the boundary value problem
\begin{equation}\label{eq:BCtw}
    \left \{
    \begin{aligned}
        &v \frac{d}{dz}\Phi(z) = \lambda(\Phi(z)-\Phi(z-1)) + \omega\Phi(z)(1-\Phi(z)), \quad z \in \R,\\
        &\lim_{z \to -\infty} \Phi(z) = 0, \quad \lim_{z \to \infty} \Phi(z) = 1.
    \end{aligned}
    \right.
\end{equation}
\begin{remark}
    The boundary value problem \eqref{eq:BCtw} is equivalent to the boundary value problem \eqref{eq convthm: u limit BVP}, in \Cref{thm: u_n convergence}, under the change of variable:
    \begin{align}\label{eq: change of variable: advODE-delayODE}
        v := \frac{1}{\tau}, \quad z := -\frac{t}{\tau}, \quad \Phi(z) := u(t).
    \end{align}
\end{remark}

\subsection{Minimum speed} \label{ss: min_speed_selection} We first determine the minimum possible speed $v$ of a travelling wave solution of \eqref{eq:BC}, which is a solution $\Phi_v$ that satisfies the boundary value problem \eqref{eq:BCtw}. We follow a minimum speed argument described in \cite{BrunetDerrida_1997}, where it was applied in the Fisher-KPP class.

\begin{prop}
If the travelling wave shape $\Phi_v$ is asymptotically exponential for large $z$, then the minimum possible speed is
\begin{align}\label{eq:v0=lambdaExpgamma0}
    v_0 := \lambda \mathrm{e}^{\gamma_0},
\end{align}
with $\gamma_0$ defined implicitly by
\begin{align}\label{eq:gamma_0equation}
    \lambda \mathrm{e}^{\gamma_0}(1-\gamma_0) = \lambda- \omega.
\end{align}
\end{prop}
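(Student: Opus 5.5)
We take the minimal-speed argument of \cite{BrunetDerrida_1997}: linearise \eqref{eq:BCtw} about the unstable state, obtain a dispersion relation $v=v(\gamma)$ for the exponential decay rates, and identify $v_0$ as the minimum of $v(\gamma)$. First we identify the relevant end of the profile. Comparing \eqref{eq:BCtw} with the linearisation \eqref{eq: limit u linear problem} under \eqref{eq: change of variable: advODE-delayODE}, the unstable state is $\Phi=1$, approached as $z\to\infty$. We therefore write $\Phi = 1-g$ with $g$ small for large $z$. Keeping only linear terms, \eqref{eq:BCtw} becomes
\begin{align*}
v\, g'(z) = \lambda\bigl(g(z)-g(z-1)\bigr) - \omega g(z).
\end{align*}

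By hypothesis $\Phi_v$ is asymptotically exponential for large $z$, so we substitute $g(z)=A\mathrm{e}^{-\gamma z}$. We need $\mathrm{Re}\,\gamma>0$ so that $g\to 0$. The substitution gives the dispersion relation
\begin{align*}
v = v(\gamma) := \frac{\lambda(\mathrm{e}^{\gamma}-1)+\omega}{\gamma}.
\end{align*}
This is the same relation as \eqref{eq: tau alpha characteristic equation} with $\alpha=\gamma v$ and $\tau=1/v$, so $\min_\gamma v(\gamma)$ corresponds to $\max_\alpha \tau(\alpha)$ in \eqref{eq convthm: tau=maxtau}.

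We then argue that $\gamma$ must be real and positive. Since $0<\Phi<1$ and $\Phi$ is monotone (as in \eqref{eq convthm: u limit boundedness and monotonicity}), $g$ is positive and cannot oscillate in its tail. Hence any admissible decay rate is real, and an admissible speed must lie in the range of $v$ on $(0,\infty)$. On this range, $v(\gamma)\to+\infty$ both as $\gamma\to 0^+$ (the numerator tends to $\omega>0$) and as $\gamma\to\infty$. Therefore $v$ attains a minimum, and for $v$ below that minimum the only roots are complex and the profile would oscillate, which is excluded. This is the step we expect to be the main obstacle if one wants full rigour. Our plan for it is to invoke the monotonicity already established in Theorem~\ref{thm: u_n convergence}, exactly as in the proof of $\tau\le\tau^*$ there, rather than to prove a new result about the asymptotics of the nonlinear problem.

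Finally we locate the minimiser. Setting $v'(\gamma)=0$ gives $\lambda\gamma \mathrm{e}^{\gamma} = \lambda(\mathrm{e}^{\gamma}-1)+\omega$, which is equivalent to $\lambda\mathrm{e}^{\gamma}(1-\gamma)=\lambda-\omega$, that is, \eqref{eq:gamma_0equation}. For uniqueness, let $h(\gamma):=\lambda\mathrm{e}^\gamma(1-\gamma)$. Then $h'(\gamma)=-\lambda\gamma\mathrm{e}^\gamma<0$ for $\gamma>0$, and $h$ decreases from $h(0)=\lambda$ to $-\infty$. Since $\lambda-\omega<\lambda$, there is a unique root $\gamma_0>0$, and it is the global minimiser of $v$. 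Using the stationarity relation, the numerator of $v(\gamma_0)$ equals $\lambda\gamma_0\mathrm{e}^{\gamma_0}$, so $v_0=v(\gamma_0)=\lambda\mathrm{e}^{\gamma_0}$, which is \eqref{eq:v0=lambdaExpgamma0}.
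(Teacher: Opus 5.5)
Your proposal is correct and follows essentially the paper's route: linearise \eqref{eq:BCtw} about $\Phi=1$, obtain the dispersion relation \eqref{eq:v(gamma)}, and identify $v_0$ as the minimum of $v(\gamma)$ over $\gamma>0$. The remaining differences are minor. You locate $\gamma_0$ by solving $v'(\gamma)=0$ directly, and your monotonicity argument for $h(\gamma)=\lambda\mathrm{e}^{\gamma}(1-\gamma)$ gives uniqueness more explicitly than the paper's convexity remark, whereas the paper extracts the same two conditions by substituting the double-root mode $1-Az\mathrm{e}^{-\gamma_0 z}$. Also, you do not need the monotonicity from Theorem~\ref{thm: u_n convergence} to exclude complex rates, since that monotonicity is only proved for the limit profile $u$; the hypothesis (read as $A,\gamma>0$ real), or simply $0<\Phi<1$, already rules them out.
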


\begin{proof}
If $\Phi_v$ is asymptotically exponential, then there exist constants $A,\gamma>0$ such that
\begin{align}\label{eq:twsol_decay1}
    \Phi_v(z) \sim 1-A\mathrm{e}^{-\gamma z} \quad \text{as $z \to \infty$}.
\end{align}
The linearisation of \eqref{eq:BCtw} around the stable fixed point $\Phi=1$ is
\begin{align}\label{eq:BCtw_linear}
    v \frac{d}{dz}\Phi^{\text{lin}}(z) = \lambda(\Phi^{\text{lin}}(z)-\Phi^{\text{lin}}(z-1)) + \omega(1-\Phi^{\text{lin}}(z)).
\end{align}
Substituting the asymptotic form \eqref{eq:twsol_decay1} into  \eqref{eq:BCtw_linear} gives the characteristic equation
\begin{align}\label{eq:v(gamma)}
    v(\gamma) := \frac{\lambda( \mathrm{e}^\gamma-1) + \omega}{\gamma}, \quad \gamma>0.
\end{align}
This establishes dependence of the travelling wave speed $v$ on the exponential decay rate $\gamma$ of the right tail of the travelling wave shape $\Phi_v$.

On the domain $\gamma>0$, the function \eqref{eq:v(gamma)} is strictly convex, positive, and blows up in the left and right boundary limits $\gamma \to 0$ and $\gamma \to \infty$, respectively. Therefore, there exists a unique minimum speed $v_0 := v(\gamma_0)>0$ for some unique $\gamma_0>0$; and for each speed $v>v_0$ there exists unique $\gamma_1,\gamma_2>0$ such that $v(\gamma_1)=v(\gamma_2)$ with $0<\gamma_1<\gamma_0<\gamma_2$.

 When $v > v_0$, the general solution of \eqref{eq:BCtw_linear} is $ \Phi^{\text{lin}}_{v > v_0}(z) := 1-A\mathrm{e}^{-\gamma_1z}-B\mathrm{e}^{-\gamma_2z}$ with exponential decay dominated by $\min \{ \gamma_1, \gamma_2 \}$. In the limit $v \to v_0$, the two roots $\gamma_1$ and $\gamma_2$ coincide and become equal to $\gamma_0$. This corresponds to the degenerate solution of \eqref{eq:BCtw_linear},
\begin{align}\label{eq:twsol_decay2}
    \Phi^{\text{lin}}_{v_0}(z) := 1-Az\mathrm{e}^{-\gamma_0z},
\end{align}
which asymptotically dominates any other exponentially decaying solution of the form \eqref{eq:twsol_decay1}. Substituting the minimal speed solution \eqref{eq:twsol_decay2} into \eqref{eq:BCtw_linear} gives
\begin{align*}
    (\lambda \mathrm{e}^{\gamma_0} + \omega - \lambda - v_0\gamma_0)z - \lambda \mathrm{e}^{\gamma_0} + v_0 = 0,
\end{align*}
then solving the coefficients gives the result, \eqref{eq:v0=lambdaExpgamma0} and \eqref{eq:gamma_0equation}.
\end{proof}

\begin{remark}
The solution $\gamma_0$, of the transcendental equation \eqref{eq:gamma_0equation}, is given in terms of the principle branch $W_0$ of the Lambert W function \cite{LambertWfunction_1996}. This has a unique positive solution
\begin{align*}
    \gamma_0 := 1 + \text{W}_0\left(\frac{\omega-\lambda}{\lambda \mathrm{e}}\right) > 0, \quad \lambda,\omega>0.
\end{align*}
\end{remark}

\subsection{Logarithmic correction of the front} We employ a formal matched asymptotic analysis to obtain the position and shape at time $t$ of a front evolving according to the dynamics defined by \eqref{eq:BC}. This analysis closely follows \cite{BrunetDerrida_1997}. In particular, we will recover a logarithmic shift in the position of the front.

\begin{thm}
\label{thm:tcorrection}
Define the position of the front of \eqref{eq:BC} as
\begin{align*}
    X_t := \sum_{n=0}^\infty (1-\varphi_n(t)), \quad t \geq 0.
\end{align*}
Suppose that $\lim_{t \to \infty} X_t/t = v_0$ with $v_0$ defined by \eqref{eq:v0=lambdaExpgamma0}. Then
\begin{align}\label{eq:Xt_logshift}
    X_t \sim v_0 t - \frac{3}{2\gamma_0} \log t, \quad \text{as $t \to \infty$}.
\end{align}
\end{thm}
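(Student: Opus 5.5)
We follow the formal matched-asymptotics route of \cite{BrunetDerrida_1997}; the statement is a formal asymptotic claim, so we aim for a consistent leading-order derivation rather than a rigorous one. Work with $g_n(t)=1-\varphi_n(t)$, which satisfies \eqref{eq: g_n ODE}, so that $X_t=\sum_n g_n(t)$ counts the width of the region where $g_n\approx 1$. Passing to the frame $z=n-X_t$ with $X_t = v_0 t + \delta(t)$ and $\delta(t)=o(t)$, we split the profile into two regions. The first is an interior region $z=\order(1)$ where $g$ is close to the travelling wave $1-\Phi_{v_0}(z)$ of \eqref{eq:BCtw}. The second is a leading-edge region $z\gg1$ where $g\ll1$, so the nonlinearity $\omega g^2$ can be dropped and $g$ solves the linear equation $\dot g_n=\lambda(g_{n-1}-g_n)+\omega g_n$.

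In the leading edge we factor out the critical exponential, writing $g_n(t)=\e^{-\gamma_0 (n-v_0t)}\,\psi(n-v_0 t,t)$. Expanding $g_{n-1}$ to second order in the spatial variable and using that $\gamma_0$ minimises $v(\gamma)$ in \eqref{eq:v(gamma)}, we find:
\begin{itemize}
\item the $\order(\psi)$ terms cancel by the definition of $v_0$;
\item the first-order drift cancels because $v'(\gamma_0)=0$, equivalently \eqref{eq:gamma_0equation};
\item what remains is a heat equation $\partial_t\psi = D\,\partial_y^2\psi$ with $D=\tfrac{\lambda}{2}\e^{\gamma_0}=v_0/2$.
\end{itemize}
We then check that the higher-order difference terms are subleading on the diffusive scale $y\sim\sqrt t$.

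Matching supplies the boundary condition. In the interior, the minimal-speed wave has tail $1-\Phi_{v_0}(z)\sim A z\e^{-\gamma_0 z}$ by \eqref{eq:twsol_decay2}. Matching therefore requires $\psi(y,t)\approx A\,(y-\delta(t))\e^{\gamma_0\delta(t)}$ for $1\ll y-\delta\ll\sqrt t$, so $\psi$ effectively vanishes near the front with a linear profile, i.e.\ an absorbing boundary. The heat equation with absorbing boundary has the dipole long-time solution $\psi(y,t)\approx C\,y\,t^{-3/2}\e^{-y^2/(4Dt)}$ from an initially localised datum. In the matching zone, $y\ll\sqrt t$, this gives $\psi\approx C y t^{-3/2}$. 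Comparing with $A y\,\e^{\gamma_0\delta(t)}$ forces $\e^{\gamma_0\delta(t)}\asymp t^{-3/2}$, i.e.\ $\delta(t)=-\tfrac{3}{2\gamma_0}\log t+\order(1)$. Since $X_t$ differs from the interior front location by $\order(1)$, because the sum of $g_n$ over the interior and edge contributes bounded corrections, this yields \eqref{eq:Xt_logshift}.

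The main obstacle is justifying the matching. Specifically, we must argue that the hypothesis $X_t/t\to v_0$ selects the critical decay $\gamma_0$ and the linear-in-$z$ prefactor, rather than a faster wave with $\gamma_1<\gamma_0$. We must also show that the initial condition $g_n(0)=0$ is steep enough (compare the super-exponential bound \eqref{ineq: 0<1-varphi<f}) that the edge is pulled and the dipole, not the monopole $t^{-1/2}$, solution governs. We would handle this as in \cite{BrunetDerrida_1997}: the absorbing condition kills the monopole mode, and the odd (dipole) mode decays like $t^{-3/2}$. The precise constant $D$ does not affect the coefficient $3/(2\gamma_0)$.
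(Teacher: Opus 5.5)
Your argument is correct at the same formal level as the paper's, but it selects the coefficient $3/2$ by a different mechanism.

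\emph{The paper's route.} The paper inserts the self-similar ansatz \eqref{eq:generalscaleG} directly in the frame of $X_t$ and leaves the log-coefficient $\beta$ free. This gives the Kummer-type equation \eqref{eq:Gode2}, which is effectively an eigenvalue problem in $\beta$. It then keeps the odd solution $R_\beta$ to match $z\e^{-\gamma_0 z}$. Using the series \eqref{eq:Rsol_infsum} and the asymptotics \eqref{eq:G2asymp}, it shows that positivity forces $\beta\le 3/2$. Finally it argues that a step initial condition, being steeper than $x^{-2}\e^{-\gamma_0 x}$, forces the boundary case $\beta=3/2$.

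\emph{Your route.} You stay in the fixed frame $y=n-v_0t$, where the leading edge obeys the plain heat equation $\partial_t\psi=\tfrac{v_0}{2}\partial_y^2\psi$. You read off the decay $t^{-3/2}$ from the half-line Dirichlet problem with localised data. The logarithm then follows from amplitude matching, $A y\,\e^{\gamma_0\delta}\asymp C y\, t^{-3/2}$.

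\emph{How the two connect.} Substituting $\psi=t^{1/2-\beta}G(y/\sqrt t)$ into your heat equation reproduces \eqref{eq:Gode2} exactly. Your dipole $y\,t^{-3/2}\e^{-y^2/(2v_0t)}$ equals $\sqrt{2v_0}\,t^{-1}R_{3/2}(y/\sqrt t)$. The $O(\log t)$ gap between your frame and the paper's is negligible on the diffusive scale.

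\emph{What each buys.} Your version is shorter and makes the origin of $3/2$ transparent: it is the decay rate of the lowest odd mode of the heat equation. It needs no confluent hypergeometric functions and no positivity-based selection of $\beta$. The paper's one-parameter family says more about dependence on initial data: an initial tail $x^{v}\e^{-\gamma_0 x}$ with $v>-2$ would give $\beta=(1-v)/2$ instead. Your approach could recover this only by redoing the heat-equation analysis for non-localised data.
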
 

\begin{lm}
    The position of the front $X_t$ is positive and finite for all $t > 0$.
\end{lm}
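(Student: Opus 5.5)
Positivity comes from Lemma~\ref{lm: varphi_n properties}, and finiteness from the explicit comparison bound \eqref{ineq: 0<1-varphi<f} proved in Lemma~\ref{lm: hittingtimes rel compact}.

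\emph{Positivity.} By property (i) of Lemma~\ref{lm: varphi_n properties}, $0<\varphi_n(t)<1$ for every $n\ge 0$ and $t>0$. Each summand $1-\varphi_n(t)$ is therefore strictly positive. In particular $X_t \ge 1-\varphi_0(t)>0$, and we can also read this off directly from the closed form \eqref{eq:BCinitsol}.

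\emph{Finiteness.} Fix $t>0$. With $g_n=1-\varphi_n$, the comparison argument in the proof of Lemma~\ref{lm: hittingtimes rel compact}(ii) gives
\begin{align*}
0\le 1-\varphi_n(t)\le \frac{(\lambda t)^{n+1}}{(n+1)!}\,\mathrm{e}^{\omega t}, \qquad n\ge 0 .
\end{align*}
Summing over $n$, we get
\begin{align*}
X_t\le \mathrm{e}^{\omega t}\sum_{n\ge 0}\frac{(\lambda t)^{n+1}}{(n+1)!}=\mathrm{e}^{\omega t}\bigl(\mathrm{e}^{\lambda t}-1\bigr)<\infty .
\end{align*}
Since all terms are nonnegative, the series converges absolutely. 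This also shows that $X_t$ is well defined.

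The only step needing care is the comparison inequality $g_n\le f_n$ itself. In Lemma~\ref{lm: hittingtimes rel compact} it is justified because $\dot g_n=\lambda(g_{n-1}-g_n)+\omega g_n(1-g_n)\le (\omega-\lambda)g_n+\lambda g_{n-1}$, using $0\le g_n\le 1$. One then runs an induction on $n$ with the quasi-monotone (cooperative) comparison principle, starting from $g_{-1}=1=f_{-1}$. We take this from the paper as given.

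As a cross-check, the bound has the natural interpretation of a Poisson tail, $P(N_t\ge n+1)$ with $N_t\sim\mathrm{Poisson}(\lambda t)$, inflated by $\mathrm{e}^{\omega t}$. Its sum is essentially the mean height $\lambda t$ times a growth factor, so finiteness is immediate.
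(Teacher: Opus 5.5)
Your proof is correct and follows the paper's argument: positivity comes from property (i) of Lemma~\ref{lm: varphi_n properties}, and finiteness comes from summing the comparison bound \eqref{ineq: 0<1-varphi<f} to get $X_t \le \mathrm{e}^{\omega t}(\mathrm{e}^{\lambda t}-1)$. Your justification of $g_n \le f_n$, by induction on $n$ using $g_{n-1}\le f_{n-1}$ and $\lambda>0$, is more careful than the paper's one-line appeal. The closing Poisson ``cross-check'' is only heuristic, since $(\lambda t)^{n+1}/(n+1)!$ bounds the Poisson tail from above rather than equalling it, but the proof does not depend on it.
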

\begin{proof}
    Positivity is clear from property (i) of \Cref{lm: varphi_n properties}. To prove that $X_t$ converges, we recall the inequality \eqref{ineq: 0<1-varphi<f} which after summation implies
    \begin{align*}
        X_t \leq \e^{\omega t}(\e^{\lambda t} - 1) < \infty \quad \text{for all } t \geq 0,
    \end{align*}
    which gives the result.
\end{proof}

\begin{remark}
    It is well known that for equations similar in form to \eqref{eq:BC}, the travelling wave speed is influenced by the tail behaviour of the initial data. In fact, for initial data which decays fast enough, the travelling wave speed is minimal among all possible travelling wave solutions, see \cite{AronsonWeinberger_1978, DeeLanger_1983, Bramson_1983, vanSaarloosWim_1987, vanSaarloosWim_1988, vanSaarloosWim_1989, ColletEckmann_1990}. In view of this, and the step-like initial condition of \eqref{eq:BC}, we assume convergence to the minimal speed travelling wave solution. In fact, this is what we observe numerically, see \Cref{fig:logtcorrection}.
\end{remark}

\begin{remark}
    There exists technical frameworks which prove rigorously the convergence to minimal speed and leading order corrections. The logarithmic correction \eqref{eq:Xt_logshift} was first identified and rigorously proved for the maximal displacement of branching Brownian motion (BBM) and then extended to the location of the level sets of solutions to the Fisher-KPP equation in the seminal works of Bramson in the 1980s \cite{Bramson_1978, Bramson_1983}. More recently, for the Fisher-KPP level sets, this was reproved by pure PDE methods \cite{HamelNolen_2013}. The same phenomenon has subsequently been rigorously proved for branching random walks (BRW) \cite{BRW_BramsonCorrection_2009, BRW_BramsonCorrection_2013}, where the maximum or minimum particle position exhibits an analogous logarithmic shift, and for discrete Fisher-KPP equations on the integer lattice $\Z$ \cite{discreteFKPP_BramsonCorrection_2023}. Due to the length and technical nature of these proofs, we will instead employ a formal asymptotic argument adapted from \cite{BrunetDerrida_1997}.
\end{remark}

\subsection{Proof of \Cref{thm:tcorrection}} Assuming convergence to minimum speed, we write
\begin{align}\label{eq:Xt=vt-ct}
    X_t = v_0 t - c(t), \quad t \geq 0,
\end{align}
where $c(t)$ is a correction term to be determined and $v_0$ is defined by \eqref{eq:v0=lambdaExpgamma0}.

\subsubsection{Scaling form ansatz} We assume the scaling ansatz
\begin{align}\label{eq:generalscaleG}
    \varphi_n(t) = 1 - t^\alpha G(z) \mathrm{e}^{-\gamma_0 (n-X_t)}, \quad z = (n-X_t)t^{-\alpha},
\end{align}
where $G$ is an amplitude to be determined. The idea behind this choice of scaling form is that we suppose there exists some constant $\alpha>0$ such that there is a region of size $t^\alpha$ ahead of the front $X_t$ that keeps the memory of the initial condition $\varphi_n(0)=1$. This choice \eqref{eq:generalscaleG} is analogous to that in \cite{BrunetDerrida_1997}, and we will see that it is asymptotically valid for large $t$.

\subsubsection{Solving for the amplitude} Inserting the scaling form ansatz \eqref{eq:generalscaleG} into \eqref{eq:BC}, and using the minimum speed $v_0$ \eqref{eq:v0=lambdaExpgamma0}; the definition of $\gamma_0$ \eqref{eq:gamma_0equation}; and the Taylor expansion
\begin{align*}
    G(z-t^{-\alpha}) &= G(z) - t^{-\alpha} G(z)' + \frac{1}{2}t^{-2\alpha}G''(z)  + \mathcal{O}(t^{-3\alpha}),
\end{align*}
we compute, to leading order in $t$, that the amplitude $G$ satisfies 
\begin{multline}\label{eq:Gode}
    v_0 t^{-\alpha} G''(z) + 2\alpha t^{\alpha-1} \left( zG'(z) - G(z) \right) + 2 t^\alpha \gamma_0 \dot{c}(t)G(z)\\  = 2\dot{c}(t)G'(z) + 2\omega G(z)^2 t^{2\alpha} \mathrm{e}^{-\gamma_0 z t^\alpha} + \mathcal{O}(t^{-2\alpha}).
\end{multline}
Enforcing the requirement that the leading order terms of \eqref{eq:Gode} be comparable, we see that
\begin{align*}
     \alpha = \frac{1}{2}, \quad \dot{c}(t) \sim \frac{\beta}{\gamma_0 t}, \quad \beta \in \R.
\end{align*} 
In this scale, all terms on the left-hand-side of \eqref{eq:Gode} are of order $\mathcal{O}(t^{-1/2})$ whereas the right-hand-side is of order $\mathcal{O}(t^{-1})$ while on the half-line $z>0$. Hence the right-hand-side of \eqref{eq:Gode} is asymptotically negligible and the amplitude $G$ solves
\begin{align}\label{eq:Gode2}
    G''(z) + \frac{z}{v_0}G'(z) + \frac{1}{v_0}(2 \beta-1)G(z) = 0,
\end{align}
and the position of the front \eqref{eq:Xt=vt-ct} becomes
\begin{align}\label{eq:Xt=vt-betalnt}
    X_t \sim v_0t -\frac{\beta}{\gamma_0}\log t.
\end{align}
\begin{remark}
    We note that \eqref{eq:Gode2} and \eqref{eq:Xt=vt-betalnt} correspond to equations (A10) and (A11) in \cite{BrunetDerrida_1997} with $v_0=2$ and $\gamma_0=1$, whereas here $v_0$ and $\gamma_0$ are given by \eqref{eq:v0=lambdaExpgamma0} and \eqref{eq:gamma_0equation}.
\end{remark}

\begin{prop}
    The general solution of the second-order ODE \eqref{eq:Gode2} is given by linear combinations of the functions
    \begin{align}\label{eq:G_gensol}
    \left \{
    \mathrm{e}^{-z^2 / 2v_0}   {}_{1}F_{1} \left( 1-\beta ; \frac{1}{2} ; \frac{z^2}{2v_0} \right),\ \mathrm{e}^{-z^2 / 2v_0}   \frac{z}{\sqrt{2 v_0}}{}_1F_1 \left( \frac{3}{2}-\beta ; \frac{3}{2} ; \frac{z^2}{2v_0} \right) \right \},
\end{align}
where ${}_1F_1$ is the confluent hypergeometric function of the first kind. 
\end{prop}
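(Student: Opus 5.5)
The plan is to reduce \eqref{eq:Gode2} to Kummer's confluent hypergeometric equation by removing a Gaussian factor and then changing to the variable $s = z^2/(2v_0)$.

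First, substitute $G(z) = \mathrm{e}^{-z^2/2v_0} H(z)$. Then
\begin{align*}
G' = \mathrm{e}^{-z^2/2v_0}\left(H' - \tfrac{z}{v_0}H\right), \qquad G'' = \mathrm{e}^{-z^2/2v_0}\left(H'' - \tfrac{2z}{v_0}H' + \left(\tfrac{z^2}{v_0^2} - \tfrac{1}{v_0}\right)H\right).
\end{align*}
Inserting these into \eqref{eq:Gode2}, the $z^2H/v_0^2$ terms cancel. What remains is
\begin{align*}
H'' - \frac{z}{v_0}H' - \frac{2(1-\beta)}{v_0}H = 0 .
\end{align*}

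Second, set $s = z^2/(2v_0)$ and regard $H$ as a function of $s$. Using $H'(z) = \tfrac{z}{v_0}H_s$ and $H''(z) = \tfrac{z^2}{v_0^2}H_{ss} + \tfrac{1}{v_0}H_s$, then multiplying through by $v_0/2$, gives Kummer's equation
\begin{align*}
s H_{ss} + \left(\tfrac12 - s\right)H_s - (1-\beta)H = 0,
\end{align*}
with parameters $a = 1-\beta$ and $b = \tfrac12$.

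Third, write down the two standard solutions of Kummer's equation for $b = \tfrac12$ not an integer. These are ${}_1F_1(a;b;s)$ and $s^{1-b}\,{}_1F_1(a-b+1;2-b;s)$, which here become ${}_1F_1(1-\beta;\tfrac12;s)$ and $s^{1/2}\,{}_1F_1(\tfrac32-\beta;\tfrac32;s)$. On the $z$-line we replace $s^{1/2}$ by the analytic choice $z/\sqrt{2v_0}$, which makes the second function odd in $z$. One should check directly in the $z$ variable that this still solves the $H$-equation, since the substitution $s = z^2/(2v_0)$ is only a local change of variable away from $z = 0$.

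Fourth, establish linear independence. At $z=0$ the first function equals $1$ and the second equals $0$, while their derivatives are $0$ and $1/\sqrt{2v_0}$ respectively. Their Wronskian at $z=0$ is therefore nonzero. Since \eqref{eq:Gode2} is a second-order linear ODE with analytic coefficients, these two functions span the whole solution space.

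The main obstacle is only bookkeeping: getting the cancellation in the first step and the factor $1/2$ in the second step right. Justifying the odd branch near $z=0$ is a minor point, handled by the direct check in the third step.
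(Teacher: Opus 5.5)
Your proposal is correct and follows the same route as the paper. The Gaussian substitution $G = \mathrm{e}^{-z^2/2v_0}H$ gives $H'' - \frac{z}{v_0}H' + \frac{2}{v_0}(\beta-1)H = 0$, and the change of variable $s = z^2/(2v_0)$ turns this into Kummer's equation with $a = 1-\beta$, $b = \tfrac12$, whose standard solution pair gives the stated basis. Your extra checks — taking the odd branch $z/\sqrt{2v_0}$ for $s^{1/2}$, and linear independence via the Wronskian at $z=0$ — are correct details that the paper leaves implicit.
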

\begin{proof}
    Substituting the ansatz $G(z) = u(z) \mathrm{e}^{-z^2/2v_0}$ into \eqref{eq:Gode2}, we obtain
    \begin{align*}
        u''(z) - \frac{z}{v_0}u'(z) + \frac{2}{v_0}(\beta-1)u(z) = 0.
    \end{align*}
    Then taking the coordinate transformation $w = \frac{z^2}{2v_0}$ and $y(w) = u(z)$, we obtain
    \begin{align*}
        w y(w)'' + \left(\frac{1}{2} - w\right)y'(w) + (\beta-1)y(w) = 0,
    \end{align*}
    which matches the form of Kummer's differential equation whose general solution is spanned by the functions
    \begin{align*}
        {}_1F_1\left(1-\beta ; \frac{1}{2} ; w\right) \quad \text{and} \quad w^{1/2} {}_1F_1\left(\frac{3}{2}-\beta ; \frac{3}{2} ; w\right),\quad \beta \in \R,
    \end{align*}
    see \cite[Section 2]{MathewsEsrick_2022}, \cite[Section 13]{AbramowitzStegun_1970}. The result then follows.
\end{proof}

\subsubsection{Selection of amplitude} As $t \to \infty$ the front will approach its limiting form and so its shape should look like the minimal speed solution $1-z\mathrm{e}^{-\gamma_0 z}$ which is \eqref{eq:twsol_decay2}. Hence we choose the solution of \eqref{eq:Gode2} which is linear at $z=0$. This corresponds to the linearly independent solution with odd powers of $z$ given by the second component of \eqref{eq:G_gensol}.

In view of this, the relevant solution of \eqref{eq:Gode2} is
\begin{align}
    R_{\beta}(z) &:= \mathrm{e}^{-z^2 / 2v_0}   \frac{z}{\sqrt{2 v_0}}{}_1F_1 \left( \frac{3}{2}-\beta ; \frac{3}{2} ; \frac{z^2}{2v_0} \right) 
    = \mathrm{e}^{-z^2 / 2v_0} \sum_{s=0}^\infty \frac{(3/2-\beta)_s}{(3/2)_s} \frac{1}{s!} \left( \frac{z}{\sqrt{2v_0}} \right)^{2s+1}, \label{eq:Rsol_infsum}
\end{align}
which converges for $z,\beta \in \mathbb{C}$ \cite[Equation 2.2]{MathewsEsrick_2022}, and the Pochhammer symbol is given by
\begin{align*}
    (a)_0 = 1 \quad \text{and} \quad (a)_s = a(a+1)\ldots(a+s-1) = \frac{\Gamma(a+s)}{\Gamma(a)} \quad \text{for $s \in \N$}.
\end{align*}

\subsubsection{Solving the correction $c(t) = \frac{\beta}{\gamma_0} \log(t)$} For the scaling form ansatz \eqref{eq:generalscaleG} to be reachable for the initial condition $\varphi_n(0)=1$, it must satisfy the bound $\varphi_n(t)\leq 1$ over its entire domain. This requires that the amplitude function $R_{\beta}(z)$ be positive on the half-line $z>0$. To proceed, we make use of the following lemma:

\begin{lm}[$R_\beta$ asymptotics]
    In the limit $z \to \infty$, 
    \begin{align}\label{eq:G2asymp}
    R_{\beta}(z) \sim 
    \begin{cases}
        \displaystyle \frac{\Gamma(3/2)}{\Gamma(3/2-\beta)} (2v_0)^{\beta-1/2}  z^{1-2\beta}, & \text{if $\beta \notin 3/2 + \N_0$} \\[4mm]
        \displaystyle (2v_0)^{1-\beta} \mathrm{e}^{-z^2/2v_0} z^{2\beta - 2}, & \text{if $\beta \in 3/2 + \N_0$},
    \end{cases}
\end{align}
\end{lm}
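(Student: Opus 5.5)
The plan is to rewrite $R_\beta$ in the Kummer variable $w := z^2/(2v_0)$, which tends to $+\infty$ as $z \to \infty$. From \eqref{eq:Rsol_infsum},
\begin{align*}
R_\beta(z) = \mathrm{e}^{-w}\, w^{1/2}\, {}_1F_1\!\left(a;\tfrac{3}{2};w\right), \qquad a := \tfrac{3}{2}-\beta .
\end{align*}
The question then becomes the large-argument behaviour of ${}_1F_1(a;b;w)$ with $b=3/2$ fixed and $w\to+\infty$ along the positive real axis. The two cases in \eqref{eq:G2asymp} correspond exactly to whether $a$ is a non-positive integer. If it is, the Kummer series terminates. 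If it is not, $1/\Gamma(a)\neq 0$ and the exponentially growing part of ${}_1F_1$ survives.

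\emph{Case $\beta\notin 3/2+\N_0$.} Here $a\notin -\N_0$, and I will invoke the standard asymptotic expansion of Kummer's function \cite[13.1.4]{AbramowitzStegun_1970}:
\begin{align*}
{}_1F_1(a;b;w) = \frac{\Gamma(b)}{\Gamma(a)}\,\mathrm{e}^{w} w^{a-b}\bigl(1+\order(w^{-1})\bigr), \qquad w\to+\infty .
\end{align*}
This holds for real $w$ because the other contribution, of size $w^{-a}$, is exponentially smaller than the first. If a self-contained argument is preferred, it follows from the Laplace-type integral ${}_1F_1(a;b;w)=\frac{\Gamma(b)}{\Gamma(a)\Gamma(b-a)}\int_0^1 \mathrm{e}^{ws}s^{a-1}(1-s)^{b-a-1}\,ds$. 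That integral is valid for $\operatorname{Re} b>\operatorname{Re} a>0$, and Watson's lemma at the endpoint $s=1$ gives the asymptotic. The restriction on $a$ is then removed using the contiguous relations, or by analytic continuation in $a$.

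Substituting the expansion gives
\begin{align*}
R_\beta \sim \frac{\Gamma(3/2)}{\Gamma(3/2-\beta)}\,\mathrm{e}^{-w}w^{1/2}\,\mathrm{e}^{w}w^{-\beta} = \frac{\Gamma(3/2)}{\Gamma(3/2-\beta)}\, w^{1/2-\beta}.
\end{align*}
Since $w^{1/2-\beta}=(2v_0)^{\beta-1/2}z^{1-2\beta}$, this is the first line of \eqref{eq:G2asymp}.

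\emph{Case $\beta=3/2+m$, $m\in\N_0$.} Here $a=-m$, and the Pochhammer symbols $(a)_s$ vanish for $s>m$. So ${}_1F_1(-m;3/2;w)$ is a polynomial of degree $m$; up to normalisation it is a generalised Laguerre polynomial $L_m^{(1/2)}(w)$. Its leading term is
\begin{align*}
\frac{(-m)_m}{(3/2)_m\, m!}\,w^m = \frac{(-1)^m}{(3/2)_m}\,w^m,
\end{align*}
and hence
\begin{align*}
R_\beta \sim \frac{(-1)^m}{(3/2)_m}\,\mathrm{e}^{-w}w^{m+1/2}.
\end{align*}
Since $w^{m+1/2}=w^{\beta-1}=(2v_0)^{1-\beta}z^{2\beta-2}$, this gives the Gaussian-times-power form in the second line of \eqref{eq:G2asymp}.

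The computation produces the nonzero constant $(-1)^m/(3/2)_m$, which the statement does not display. I would present it explicitly, since its sign matters for the subsequent positivity discussion of $R_\beta$, or else read \eqref{eq:G2asymp} as holding up to such a constant.

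The only nontrivial step is justifying the Kummer asymptotic in the first case for general real $a$, rather than only for $0<a<b$. I would settle this by citing the standard result instead of rederiving it. Everything else is bookkeeping of the change of variables $w=z^2/(2v_0)$.
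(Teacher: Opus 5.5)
Your proposal is correct and follows the paper's approach. The paper's proof is only a citation of the standard large-argument asymptotics of ${}_1F_1$ (Abramowitz--Stegun 13.5.1, Mathews et al.). You apply that result after substituting $w=z^2/(2v_0)$, and treat the terminating case through the leading coefficient of the polynomial.

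Your extra factor $(-1)^m/(3/2)_m$ for $\beta=3/2+m$ is also right. The lemma's second line as printed is exact only at $\beta=3/2$; for $m\ge 1$ it holds only up to this sign-carrying constant. The paper's later Case 1b argument reads the sign directly off the polynomial, so it is unaffected.
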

\begin{proof}
This follows immediately from known asymptotics for ${}_1F_1$ in the limit $z \to \infty$, see \cite[Equations 4.1, 4.3]{MathewsEsrick_2022} and \cite[Subsection 13.5.1]{AbramowitzStegun_1970}.
\end{proof}

\begin{prop}[Valid $\beta$ values]
    The amplitude function $R_{\beta}(z)$ is positive on the half-line $z>0$ if and only if $\beta \leq 3/2$.
\end{prop}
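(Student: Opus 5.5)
We set $w = z^2/(2v_0)$ and $a := \tfrac{3}{2}-\beta$, so that by \eqref{eq:Rsol_infsum}
\[
R_\beta(z) = \mathrm{e}^{-w}\, w^{1/2}\, {}_1F_1\!\left(a;\tfrac32;w\right), \qquad z>0 .
\]
The prefactor $\mathrm{e}^{-w}w^{1/2}$ is positive for $z>0$. The plan is therefore to treat the two directions of the equivalence separately: a direct series argument for $\beta\le 3/2$, and a Sturm/Wronskian comparison with the explicit solution $R_{3/2}$ for $\beta>3/2$, using the asymptotics \eqref{eq:G2asymp} to conclude.

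\textbf{Sufficiency ($\beta\le 3/2$).} In this case $a\ge 0$, so every Pochhammer symbol $(a)_s$ is nonnegative. Hence every term of the series in \eqref{eq:Rsol_infsum} is nonnegative for $z>0$. The $s=0$ term equals $z/\sqrt{2v_0}>0$, so $R_\beta(z)>0$ on $z>0$. This direction is immediate.

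\textbf{Necessity ($\beta>3/2$).} Suppose, for contradiction, that $R_\beta>0$ on $(0,\infty)$. Write \eqref{eq:Gode2} in Sturm--Liouville form with weight $p(z):=\mathrm{e}^{z^2/2v_0}$:
\[
\bigl(p\,G'\bigr)' + \frac{2\beta-1}{v_0}\,p\,G = 0 .
\]
The comparison solution is $R_{3/2}(z) = \tfrac{z}{\sqrt{2v_0}}\mathrm{e}^{-z^2/2v_0}$, which is positive on $z>0$. Both $R_\beta$ and $R_{3/2}$ vanish at $z=0$. Define the weighted Wronskian
\[
W := p\left(R_{3/2}R_\beta' - R_\beta R_{3/2}'\right).
\]
Using the two ODEs, it satisfies
\[
W' = -\frac{2\beta-3}{v_0}\,p\,R_\beta R_{3/2} < 0 \quad\text{on } (0,\infty), \qquad W(0)=0 .
\]
Hence $W(z)\le W(1)<0$ for all $z\ge 1$.

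\textbf{Contradiction at infinity.} We compute the limit of $W$ from \eqref{eq:G2asymp}, and we expect this to be the main point requiring care; the asymptotics should be differentiated via the ODE or the known expansion of ${}_1F_1$.
\begin{itemize}
\item If $\beta\notin 3/2+\N_0$, then $R_\beta\sim C z^{1-2\beta}$ with $C=\Gamma(3/2)(2v_0)^{\beta-1/2}/\Gamma(a)$. If $C<0$, then $R_\beta<0$ for large $z$, which already contradicts positivity. If $C>0$, then
\[
p R_{3/2}R_\beta' = \order(z^{1-2\beta})
\quad\text{and}\quad
-pR_\beta R_{3/2}' \sim \frac{C}{v_0\sqrt{2v_0}}\,z^{3-2\beta}>0 .
\]
So $W(z)>0$ for all large $z$, contradicting $W<0$.
\item If $\beta=3/2+k$ with $k\ge1$, then $a=-k$ and $R_\beta=P(z)\,\mathrm{e}^{-z^2/2v_0}$, where $P$ is an odd polynomial of degree $2k+1$. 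In this case
\[
W=\tfrac{1}{\sqrt{2v_0}}\bigl(zP'-P\bigr).
\]
The leading term of $zP'-P$ is $2k$ times the leading coefficient of $P$. So either $P$ is eventually negative, or $W\to+\infty$; both contradict the assumption.
\end{itemize}
Therefore $R_\beta$ must take a nonpositive value on $z>0$ whenever $\beta>3/2$, which completes the equivalence.
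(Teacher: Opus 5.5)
Your proof is correct. The direction $\beta\le 3/2$ is the paper's argument (nonnegative Pochhammer coefficients), but for $\beta>3/2$ you take a genuinely different route.

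The paper gets eventual negativity directly in two families of cases:
\begin{itemize}
\item for $\beta\in(3/2,5/2)+2\N_0$, from the sign of $\Gamma(3/2-\beta)$ in \eqref{eq:G2asymp};
\item for $\beta=3/2+m$ with $m$ odd, from the sign of $(-m)_m$.
\end{itemize}
It then reaches the remaining values through the recurrence $R_\beta''=-\tfrac{2\beta}{v_0}R_{\beta+1}$. A function that crosses zero, is negative on a tail and decays must have $R_\beta''>0$ somewhere on that tail, which forces $R_{\beta+1}<0$ there.

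Your weighted Wronskian against the explicit solution $R_{3/2}$ treats all $\beta>3/2$ at once. The cases where the asymptotics give the ``wrong'' sign are exactly those in which $W$ is forced to be positive at infinity. So you need neither the recurrence (which the paper asserts without derivation) nor the concavity step.

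What you pay for this is the asymptotics of $R_\beta'$ as well as $R_\beta$. You can avoid that by writing $W=pR_{3/2}^2\,(R_\beta/R_{3/2})'$. Then $W<0$ means ${}_1F_1(a;\tfrac32;w)$ decreases from $1$, i.e.\ $R_\beta<R_{3/2}$ on $(0,\infty)$. This is impossible if $R_\beta\sim Cz^{1-2\beta}$ with $C>0$, and also impossible in the polynomial case, where $P(z)/z\to+\infty$.

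One slip: in the polynomial case the Gaussian does not cancel. The correct formula is $W=\tfrac{1}{\sqrt{2v_0}}\mathrm{e}^{-z^2/2v_0}(zP'-P)$, so $W\to 0^+$ rather than $+\infty$ when $P$ has positive leading coefficient. You only use eventual positivity of $W$, so the contradiction with $W\le W(1)<0$ still stands.
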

\begin{proof}

Using the power series definition \eqref{eq:Rsol_infsum}, one can show that $R_{\beta}(z)$ satisfies the recurrence relation
\begin{align}\label{eq:G''_recurrence}
    R_{\beta}''(z) = -\frac{2}{v_0}\frac{\Gamma(\beta+1)}{\Gamma(\beta)}R_{\beta+1}(z).
\end{align}

\noindent\textbf{Case 1: $\beta > 3/2$.} Consider the sub-cases:

\begin{itemize}
    \item  \textbf{Case 1a: $\beta \notin 3/2 + \N_0$.}\\
    The asymptotic form \eqref{eq:G2asymp} implies that $R_{\beta}$ is negative for large $z$ if $\beta \in (3/2,5/2) + 2\N_0$, due to the $\Gamma(3/2-\beta)$ term. This means that $R_{\beta}(z)$ changes sign from positive to negative at some point $z_*>0$, stays negative and then converges to zero. Then there must exist some $ r \in (z_*,\infty)$ such that $R_{\beta}''(r) > 0$, and relation \eqref{eq:G''_recurrence} implies that $R_{\beta+1}(r)<0$. Hence for all $\beta \in (3/2,5/2) + \N_0$, there exists some $r>0$ such that $R_{\beta}(r)<0$.\\

    \item \textbf{Case 1b: $\beta \in 3/2 + \N$.}\\
    For these values of $\beta$, the first argument of ${}_1F_1$ in \eqref{eq:Rsol_infsum} takes values in $\Z_{\leq -1}$ and is thus a finite polynomial in $z$. The power series definition \eqref{eq:Rsol_infsum} then becomes
\begin{align}
    R_{\beta \in 3/2 + \N}(z) = \mathrm{e}^{-z^2 / 2v_0} \sum_{s=0}^{|3/2-\beta|} \frac{(3/2-\beta)_s}{(3/2)_s}\frac{1}{s!} \left( \frac{z}{\sqrt{2v_0}} \right)^{2s+1}.\label{eq:G2sol_gamma0betaHalfint}
\end{align}

We show that \eqref{eq:G2sol_gamma0betaHalfint} is negative at some point for $z > 0$. Let $\beta = 3/2 + m$ for some $m \in \N$. If $m \in \{1,3,5,\ldots \}$ then the coefficient of the highest power term in \eqref{eq:G2sol_gamma0betaHalfint} is negative since $(3/2-\beta)_{|3/2-\beta|} = (-m)_m < 0$. Therefore \eqref{eq:G2sol_gamma0betaHalfint} becomes and stays negative for large $z$ and converges to zero. By the same argument as in Case 1a, the relation \eqref{eq:G''_recurrence} implies that $R_{\beta+1}$ becomes negative at some point which corresponds to values $m \in \{ 2,4,6,\ldots \}$. Hence for all $\beta \in 3/2 + \N$, there exists some $r>0$ such that $R_{\beta}(r)<0$.
\end{itemize}

\noindent\textbf{Case 2: $\beta \leq 3/2$.}\\
Using the power series definition \eqref{eq:Rsol_infsum}, the sign of the $s^{th} \geq 0$ coefficient is given by the sign of
    \begin{align*}
        (3/2 - \beta)_s = 
        \begin{cases}
            1, & \text{ if $s = 0$},\\
            \prod_{k=0}^{s-1} (3/2 - \beta + k) \geq 0, & \text{ if $s \geq 1$},
        \end{cases}
    \end{align*}
    where the case $s \geq 1$ evaluates to zero only if $\beta = 3/2$. Therefore $R_{\beta}(z)>0$ for all $z > 0$.

\end{proof}

 It follows that the scaling form ansatz \eqref{eq:generalscaleG} is valid only for $\beta \leq 3/2$. Consider the asymptotic form \eqref{eq:G2asymp} and suppose initially $\varphi_x(0) = 1 - x^v \e^{-\gamma_0x}$. If $\beta<3/2$ then it follows that $v = 1-2\beta$ and $v>-2$. Then for initial conditions with $v \leq -2$ or steeper, such as in our case of step initial conditions $\varphi_n(0)=1$, it follows that $\beta = 3/2$. Therefore, the shift in the position of the front \eqref{eq:Xt=vt-betalnt} with $\beta=3/2$ gives the result \eqref{eq:Xt_logshift} of \Cref{thm:tcorrection}. Furthermore, the leading order propagating speed towards the travelling wave solution of minimal speed is
\begin{align}\label{eq:speed_pred}
    v(t) \sim v_0 - \frac{3}{2\gamma_0}\log(t)/t.
\end{align}

\begin{figure}[H]
    \centering
    \includegraphics[width=0.6\textwidth]{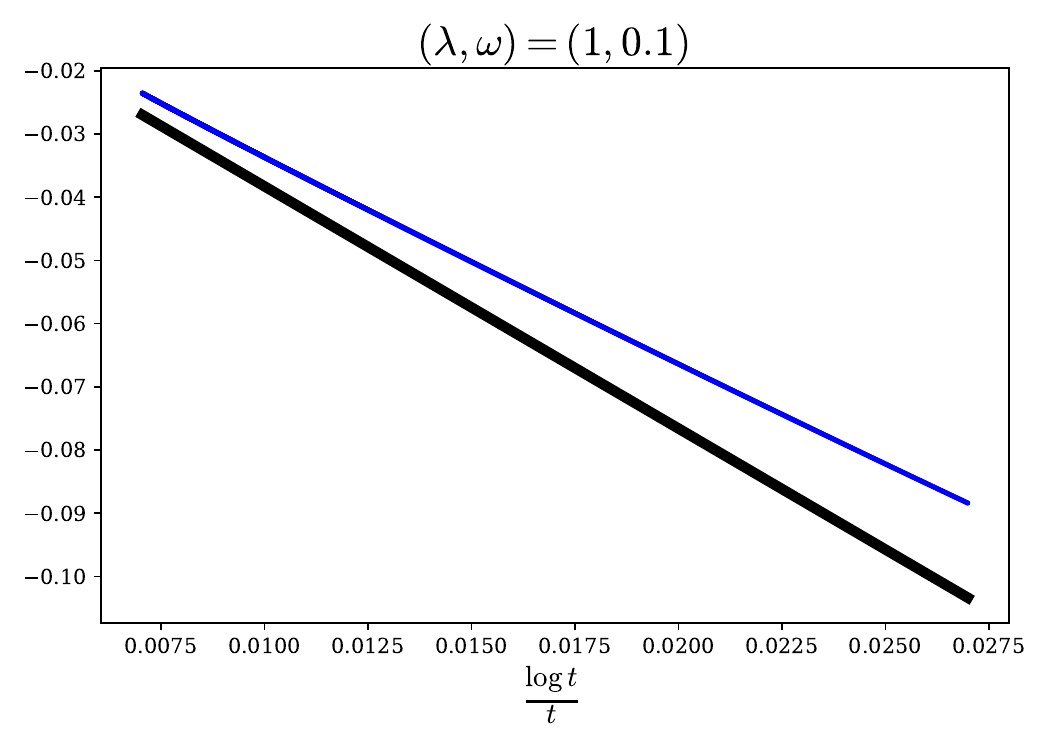}

    \caption{Numerical computations demonstrating the logarithmic correction \eqref{eq:speed_pred}. In blue: the numerical computation of $v(t)-v_0$ with $v_0$ defined by \eqref{eq:v0=lambdaExpgamma0}. In black: the line $-\frac{3}{2\gamma_0} \log t / t$ with $\gamma_0$ defined by \eqref{eq:gamma_0equation}.}
    \label{fig:logtcorrection}
\end{figure}

\subsection{Shape of limiting profile} The uniform convergence to a limiting profile of the solutions to the system of ODEs \eqref{eq:BC} was established in \Cref{sec: Convergence to limiting profile}. This was the result of \Cref{thm: u_n convergence}. We now identify an explicit analytic expression that serves as an excellent approximation of this limiting profile.

\subsubsection{Continuum approximation}
To proceed we assume the existence of a smooth function $f \in \Cf^\infty(\R \times [0,\infty))$ that serves as an analytic continuation of the sequence $(\varphi_n)_{n \geq 0}$, satisfying the interpolation embedding $f(nh,t) = \varphi_n(t)$ for all $(n,t) \in \N_0 \times \R_{\geq 0}$, where $h>0$ is the lattice embedding spacing. To ensure a well-defined continuum limit we also scale the parameters $\lambda,\omega$ with respect to the lattice spacing, i.e.  $\lambda \mapsto \lambda_h$ and $\omega \mapsto \omega_h$. 

Using a local Taylor series expansion about the continuous coordinate $x=nh$, with $h\ll 1$, we compute
\begin{align*}
    \varphi_n(t) - \varphi_{n-1}(t) = h \partial_x f(x,t) - \frac{h^2}{2} \partial_x^2 f(x,t) + \order(h^3),
\end{align*}
then inserting this expansion into \eqref{eq:BC} we obtain
\begin{align*}
    \partial_t f(x,t) = -\lambda_h \left(h \partial_x f(x,t) - \frac{h^2}{2} \partial_x^2 f(x,t) \right) - \omega_h f(x,t)(1-f(x,t)) + \order(\lambda_h h^3).
\end{align*}
To eliminate the advection term we introduce a moving-frame coordinate with reflection   $\bar{f}(x,t) := 1 - f(x+\lambda_h h t,t)$, which then gives us
\begin{align*}
    \partial_t \bar{f}(x,t) = \lambda_h \frac{h^2}{2} \partial_x^2 \bar{f}(x,t) + \omega_h \bar{f}(x,t)(1 - \bar{f}(x,t)) + \order(\lambda_h h^3).
\end{align*} 
If we choose the parameter scaling $\lambda_h := \frac{\lambda}{h^2}$ and $ \omega_h := \omega$, then pass to the continuum limit $h \to 0$, the right-hand-side becomes of order $\order(\lambda h)$ and we obtain
\begin{align}\label{eq: continuum limit FKPP}
    \partial_t \bar{f}(x,t) =  \frac{\lambda}{2} \partial_{x}^2 \bar{f}(x,t) + \omega \bar{f}(x,t)(1 - \bar{f}(x,t)),
\end{align}
which is the classical Fisher-KPP equation. 
Standard theory dictates that a step-like initial condition of \eqref{eq: continuum limit FKPP} will evolve into a unique travelling wave with minimal speed $\sqrt{2\lambda\omega}$, see \cite{AronsonWeinberger_1978, DeeLanger_1983, Bramson_1983, vanSaarloosWim_1987, vanSaarloosWim_1988, vanSaarloosWim_1989, ColletEckmann_1990}. 

We transform \eqref{eq: continuum limit FKPP} to travelling wave coordinates (with an additional $\lambda$-drift and a reflection). The additional $\lambda$-drift and reflection is included to account for the moving-frame and reflection that was used to obtain \eqref{eq: continuum limit FKPP}.

\begin{prop}
    Denote $g(z) := 1 - \bar{f}(x,t)$ where $z := x-(v_g-\lambda)t$. Then the continuum ODE \eqref{eq: continuum limit FKPP} becomes the boundary value problem
    \begin{equation}
        \left\{
            \begin{aligned}\label{eq: BC_gode_approx}
    &(v_g-\lambda)g'(z) = - \frac{\lambda}{2}g''(z) + \omega g(z)(1-g(z)), \quad z \in \R,\\
             &\lim_{z \to -\infty} g(z) = 0, \quad \lim_{z \to \infty} g(z) = 1.
    \end{aligned}
        \right.
    \end{equation}
    For the travelling wave speed
\begin{align}\label{eq: gsolexact_wavespeed}
    v_g = \lambda + \frac{5}{6}\sqrt{3\lambda\omega},
\end{align}
the ODE \eqref{eq: BC_gode_approx} admits a family of explicit translation-invariant solutions
\begin{equation}\label{eq: gsolexact}
    g(z;z_0) = 1 - \left(1+\exp \left(\sqrt{\tfrac{\omega}{3\lambda}}(z-z_0)\right)\right)^{-2}, \quad z \in \R.
\end{equation}
\end{prop}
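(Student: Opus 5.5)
The plan has two parts: derive \eqref{eq: BC_gode_approx} from \eqref{eq: continuum limit FKPP} by the travelling-wave substitution, then reduce it to the normalised Fisher--KPP travelling-wave ODE and invoke the explicit solution of \cite{AblowitzZeppetella_1979}.

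\textbf{Step 1: the ODE.} Set $c := v_g-\lambda$ and write $\bar f(x,t) = 1-g(x-ct)$. Then
\begin{align*}
\partial_t \bar f = c\,g', \qquad \partial_x^2 \bar f = -g'', \qquad \bar f(1-\bar f) = g(1-g).
\end{align*}
Inserting these into \eqref{eq: continuum limit FKPP} gives exactly the ODE in \eqref{eq: BC_gode_approx}. The boundary conditions encode that the reflected profile $\bar f$ connects the unstable state $1$ (as $z\to-\infty$) to the stable state $0$ (as $z\to\infty$), in line with the orientation of \eqref{eq:BCtw}.

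\textbf{Step 2: reduction to the normalised equation.} Put $h := 1-g$. Then $h$ satisfies
\begin{align*}
\tfrac{\lambda}{2} h'' + c\,h' + \omega h(1-h) = 0 .
\end{align*}
Rescale space by $z = \sqrt{\lambda/(2\omega)}\,\xi$. This yields
\begin{align*}
h_{\xi\xi} + \kappa h_\xi + h(1-h) = 0, \qquad \kappa := \frac{c}{\sqrt{\lambda\omega/2}} ,
\end{align*}
which is the standard Fisher--KPP travelling-wave equation with wave speed $\kappa$.

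\textbf{Step 3: the special speed and explicit solution.} The Painlev\'e analysis of \cite{AblowitzZeppetella_1979} shows that for $\kappa = 5/\sqrt6$ this equation has the explicit family
\begin{align*}
h(\xi) = \bigl(1+e^{(\xi-\xi_0)/\sqrt6}\bigr)^{-2} .
\end{align*}
The condition $\kappa = 5/\sqrt6$ translates back as
\begin{align*}
c = \frac{5}{\sqrt6}\sqrt{\frac{\lambda\omega}{2}} = \frac{5\sqrt{\lambda\omega}}{2\sqrt3} = \frac56\sqrt{3\lambda\omega},
\end{align*}
which is \eqref{eq: gsolexact_wavespeed}. Since $\xi/\sqrt6 = z\sqrt{2\omega/\lambda}/\sqrt6 = \sqrt{\omega/(3\lambda)}\,z$, we recover $g = 1-h$ in the form \eqref{eq: gsolexact}, with $z_0$ the image of $\xi_0$. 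Translation invariance in $z_0$ holds because the ODE is autonomous.

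\textbf{Step 4: verification.} To keep the argument self-contained I would also check the solution directly rather than only cite \cite{AblowitzZeppetella_1979}. With $E := e^{k(z-z_0)}$ and $k = \sqrt{\omega/(3\lambda)}$:
\begin{itemize}
\item compute $h'$ and $h''$ as rational functions of $E$ times powers of $(1+E)^{-1}$;
\item substitute into the equation for $h$ and clear $(1+E)^{4}$;
\item check that the coefficients of $E^0,E^1,E^2$ vanish exactly when $k^2 = \omega/(3\lambda)$ and $c = \tfrac56\sqrt{3\lambda\omega}$.
\end{itemize}
The limits are then immediate: $E\to0$ as $z\to-\infty$ gives $g\to0$, and $E\to\infty$ as $z\to\infty$ gives $g\to1$.

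The only real obstacle is bookkeeping. One must track the constants through the two rescalings and make sure the orientation (the sign of the exponent and which end tends to $0$) matches the boundary conditions after the reflection $g = 1-\bar f$.
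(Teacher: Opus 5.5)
Your proposal is correct and follows the paper's route. The paper's proof only cites Ablowitz--Zeppetella; your rescaling to $h_{\xi\xi}+\kappa h_\xi+h(1-h)=0$ with $\kappa=5/\sqrt6$, plus the direct check, makes it self-contained. In that check, after clearing $(1+E)^4$ the $E^0$ term vanishes identically, and the $E^1$, $E^2$ coefficients $-\lambda k^2-2kc+2\omega$ and $2\lambda k^2-2kc+\omega$ vanish exactly when $k^2=\omega/(3\lambda)$ and $c=\tfrac56\sqrt{3\lambda\omega}$ (for $k>0$). One cosmetic slip in your Step 1 remark: for \eqref{eq: continuum limit FKPP}, $\bar f=1$ is the stable state and $\bar f=0$ the unstable one, so your labels are swapped, though this does not affect the argument.
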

\begin{proof}
    This follows by a computation due to Ablowitz and Zeppetella (1997) \cite{AblowitzZeppetella_1979}.
\end{proof}

\begin{figure}[H] 
\centering\includegraphics[width=0.6\linewidth]{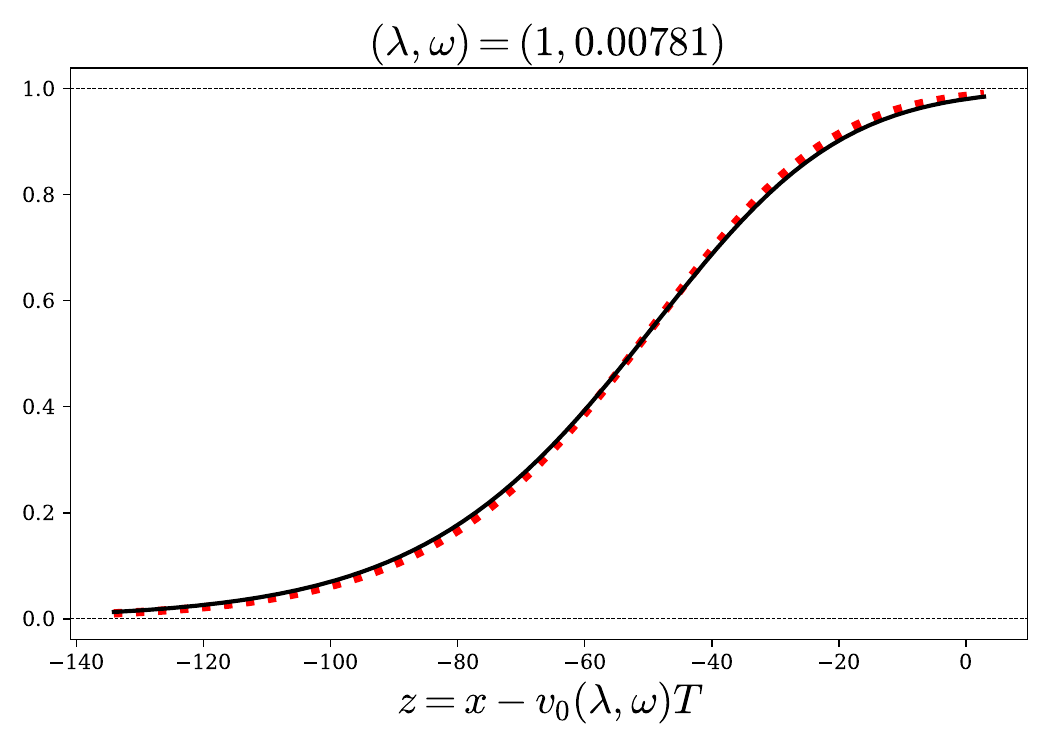}
  \caption{In black: the Ablowitz-Zeppetella exact solution \eqref{eq: gsolexact}. In red dotted: the numerical solution $\varphi_{z+v_0(\lambda,\omega)T}(T)$ of \eqref{eq:BC} for $T=2000$, where $v_0$ is given by \eqref{eq:v0=lambdaExpgamma0}. We have chosen $(\lambda,\omega) = (1, 0.00781)$ so that the wave speeds \eqref{eq:v0=lambdaExpgamma0} and \eqref{eq: gsolexact_wavespeed} equate,  $v_0(\lambda,\omega) = v_g(\lambda,\omega)$, as defined by \eqref{eq: equating exact and min BC speed}.}
  \label{fig: exact comparison}
\end{figure}

\subsubsection{Comparison of the exact solution \eqref{eq: gsolexact} with the large-time solution of \eqref{eq:BC}}
Analogously to the earlier analysis in \Cref{ss: min_speed_selection}, it is not hard to show that the minimum speed for a heteroclinic orbit of the boundary value problem \eqref{eq: BC_gode_approx} is given by $\lambda + \sqrt{2\lambda\omega}$. The Painlev\'e solution of \eqref{eq: BC_gode_approx} that is relevant to \eqref{eq:BC}, that is which satisfies the boundary conditions $g(-\infty)=0$ and $g(\infty) = 1$, is given by \eqref{eq: gsolexact} and has a wave speed of $v_g = \lambda + \frac{5}{6}\sqrt{3\lambda\omega}$ given by \eqref{eq: gsolexact_wavespeed}. Hence this solution is \textbf{not} a minimum speed solution. Indeed, for large $z$, the solution \eqref{eq: gsolexact} is of the form \eqref{eq:twsol_decay1} and not \eqref{eq:twsol_decay2}.

However, the minimum speed $v_0$, defined by \eqref{eq:v0=lambdaExpgamma0}, of \eqref{eq:BC} is larger than $\lambda+\sqrt{2\lambda\omega}$, and there exists a value of $\omega/\lambda$ where the speed of the Painlev\'e solution \eqref{eq: gsolexact} is equal to the minimum speed $v_0$ of \eqref{eq:BC}. This value is determined by equating the two speeds, \eqref{eq:v0=lambdaExpgamma0} and \eqref{eq: gsolexact_wavespeed}, i.e.
\begin{equation}\label{eq: equating exact and min BC speed}
 \lambda \exp\left(1+ W_0\left(\frac{\omega-\lambda}{\mathrm{e}\lambda}\right) \right)= \lambda + \frac{5}{6}\sqrt{3\lambda\omega}\quad \Rightarrow \quad \omega/\lambda \approx 0.00781\ldots
\end{equation}
\Cref{fig: exact comparison} shows excellent agreement between \eqref{eq: gsolexact} and the corresponding large-time numerical solution of \eqref{eq:BC}.

\section{Shift in speed for finite number of miners}\label{sec: shift in speed for finite miners}
 In this section we obtain, by way of formal matched asymptotics, the leading order correction of the difference $v_0-v_N$, where $v_0$ is defined by \eqref{eq:v0=lambdaExpgamma0} and $v_N$ is the travelling wave speed of the finite miner distribution function $\varphi^N$, with explicit definitions given by
 \begin{align}\label{eq: finite N-miner speed and distribution function}
     v_N := \lim_{t \to \infty} \frac{1}{t}\sum_{n=0}^\infty \left(1 - \varphi_n^N(t)\right), \quad \text{and} \quad \varphi_n^N(t) := \sum_{i=0}^{n} \theta_i^N(t), \quad t \geq 0, \: n \in \N_0,
 \end{align}
 in terms of the normalised finite $N$-miner $\theta^N$-process defined by \eqref{dd1} and \eqref{dd2} in \Cref{sec: height dynamics}.

\begin{thm}\label{thm:Ncorrection}
The leading order correction in the travelling wave speed $v_N$ for finite number of miners is given by
\begin{align}\label{eq: finite miner logNcorrection}
    v_0-v_N \simeq \frac{\pi^2 \gamma_0^2}{2} v''(\gamma_0) \frac{1}{( \log N )^2} \quad \text{as } N \to \infty,
\end{align}
where $v(\gamma)$ is defined in \eqref{eq:v(gamma)}, $v_0$ and $\gamma_0$ in \eqref{eq:v0=lambdaExpgamma0} and \eqref{eq:gamma_0equation}.
\end{thm}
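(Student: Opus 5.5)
We follow the cutoff argument of \cite{BrunetDerrida_1997}. For finite $N$ the quantity $1-\varphi^N_n$ takes values in $\tfrac{1}{N}\N_0$. At the leading edge of the front, where $g_n := 1-\varphi^N_n$ is small, any value below $1/N$ is therefore either $0$ or at least $1/N$. We model this by taking the deterministic equation \eqref{eq:BC}, in the variable $g_n$ of \eqref{eq: g_n ODE}, and imposing a cutoff: the growth term $\omega g_n(1-g_n)$ is switched off, or $g_n$ is set to zero, wherever $g_n<1/N$. This is the formal ansatz that replaces the stochastic $\theta^N$-process. Justifying it rigorously from \Cref{thm: FLLN Barbour-Luczac 2008} is not attempted; the correction is derived at the same formal level as \Cref{thm:tcorrection}.

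The first step is to look for a travelling wave $g_n(t)=\Psi(n-v_N t)$ of the cutoff system and to linearise around $\Phi=1$, as in \eqref{eq:BCtw_linear}. In the region $1/N \ll \Psi \ll 1$ the profile obeys the linear equation, whose exponential modes $\e^{-\gamma z}$ satisfy the dispersion relation $v=v(\gamma)$ of \eqref{eq:v(gamma)}. For $v_N<v_0$ the two roots $\gamma_{1,2}$ become complex conjugate, $\gamma=\gamma_r\pm i\gamma_i$. Expanding $v(\gamma)$ about its minimum gives $v(\gamma)\approx v_0+\tfrac12 v''(\gamma_0)(\gamma-\gamma_0)^2$. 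Matching the real part of the speed then yields $v_0-v_N\approx \tfrac12 v''(\gamma_0)\gamma_i^2$ and $\gamma_r\approx\gamma_0$, up to higher-order corrections. The linear-region solution is therefore $\Psi(z)\approx A\,\e^{-\gamma_0 z}\sin(\gamma_i z)$.

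The second step is to fix $\gamma_i$ by boundary conditions at both ends of the linear region. Near the bulk, at $z\approx 0$, the solution must match the nonlinear front, which forces the sine to vanish there, so its phase is pinned. At the cutoff point $z=L$, where $\Psi(L)\approx 1/N$, the profile must vanish, or vanish to leading order. Positivity of $\Psi$ on $(0,L)$, the same positivity requirement used to select $\beta$ in the amplitude analysis, forces $\gamma_i L\approx\pi$. Since $\e^{-\gamma_0 L}\approx 1/N$, we get $L\approx(\log N)/\gamma_0$, and hence $\gamma_i\approx \pi\gamma_0/\log N$. Substituting gives
\[
v_0-v_N\approx \tfrac12 v''(\gamma_0)\,\frac{\pi^2\gamma_0^2}{(\log N)^2},
\]
which is \eqref{eq: finite miner logNcorrection}. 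The final check is that $v''(\gamma_0)>0$, which holds by the strict convexity noted in the proof of the minimum-speed proposition, so the correction slows the front as expected.

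The main obstacle is the matching in the second step. One has to argue that the discrete first-order difference in \eqref{eq:BC}, rather than a Laplacian, changes only the constant through $v''(\gamma_0)$ and not the $\pi^2$ structure. One must also check that the boundary layers at $z\approx0$ and $z\approx L$ contribute only $\order(1)$ shifts to $L$, which affect the speed only at order $(\log N)^{-3}$. To handle this I would write the discrete linear operator's symbol exactly, $\lambda(1-\e^{\gamma})-\omega+v\gamma$, expand it to second order about $\gamma_0$, and observe that only $v''(\gamma_0)$ enters at leading order. This is exactly the perturbation computation of \cite{BrunetDerrida_1997}, with their $v_0=2,\gamma_0=1$ replaced by the values defined in \eqref{eq:v0=lambdaExpgamma0} and \eqref{eq:gamma_0equation}.
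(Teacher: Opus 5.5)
Your proposal is correct and follows essentially the same route as the paper: the Brunet--Derrida cutoff with $\varepsilon = N^{-1}$, and a linear region in which the characteristic roots become complex, so that $1-\Phi \approx (A/\beta)\,\e^{-\gamma_0 z}\sin(\beta z)$ with the phase pinned by matching to $1-Az\e^{-\gamma_0 z}$ and $v_0-v_\varepsilon \simeq \tfrac12 v''(\gamma_0)\beta^2$, followed by $\beta z_\varepsilon \simeq \pi$ with $z_\varepsilon \simeq \gamma_0^{-1}\log N$. The only difference is that the paper justifies the near-vanishing of the sine at the cutoff by matching value and derivative to a third region beyond the cutoff (decay rate $\varrho_0$), which gives $\tan(\beta z_\varepsilon) = -\beta/(\varrho_0-\gamma_0)$, whereas you impose that vanishing condition directly; both yield the same leading order.
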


The remainder of this section is devoted to a formal matched asymptotics proof of \Cref{thm:Ncorrection}. Our analysis closely follows \cite{BrunetDerrida_1997} and the final result is supported with numerical simulations of $v_N$ by simulating the $\theta^N$-process defined in \Cref{sec: height dynamics}.

\begin{remark}
    The right-hand-side of \eqref{eq: finite miner logNcorrection} is known in the literature as Brunet–Derrida behaviour. It describes the slow  $\order((\log N)^{-2})$-convergence of the propagation speed of stochastic, finite $N$-population travelling fronts toward the deterministic speed of its limiting $N=\infty$ counterpart. Its origins date back to Brunet-Derrida \cite{BrunetDerrida_1997}, and their later works \cite{BrunetDerrida_1999, BrunetDerrida_2001}, whereby they used formal matched asymptotics to show that imposing a small cutoff $\varepsilon$, on the leading edge of a Fisher-KPP travelling wave, produces a shift in speed proportional to $(\log \varepsilon)^{-2}$. This result was later proven rigorously for a broad class of cut-off functions \cite{DumortierPopovicKaper_2007}; and rigorous upper/lower bounds on the correction were obtained in \cite{BenguriaDepassier_2007, BenguriaDepassierLoss_2008}; and in \cite{MuellerMytnikQuastel_2011} they proved rigorously  the correction for a class of reaction-diffusion equations of KPP type perturbed by Fisher-Wright white noise. Brunet-Derrida \cite{BrunetDerrida_1997} also argued that for a particle population of size $N$, the correction scales as $(\log N)^{-2}$. Their physics interpretation is that the microscopic geometry of a finite population effectively cuts off the extremely low-density $\varepsilon = N^{-1}$ leading edge of the deterministic wave, and therefore reducing its speed. This connection became particularly influential in \cite{Berard_2008, BerardGourere_2010}, whereby they proved rigorously the $\order((\log N)^{-2})$-convergence for branching-selection particle systems on $\Z$ and $\R$. 
\end{remark}

\subsection{Proof of \Cref{thm:Ncorrection}}
To obtain the leading order correction \eqref{eq: finite miner logNcorrection}, we show
\begin{align}\label{eq: finite miner logeps correction}
    v_0-v_\varepsilon \simeq \frac{\pi^2 \gamma_0^2}{2} v''(\gamma_0) \frac{1}{(\log \varepsilon)^2} \quad \text{as } \varepsilon \to 0,
\end{align}
with $v_0$ and $\gamma_0$ defined in \eqref{eq:v0=lambdaExpgamma0} and \eqref{eq:gamma_0equation}, and $v_\varepsilon$ defined as the travelling wave speed of the $\varepsilon$-cutoff modified system of ODEs given by \eqref{eq:BC_eps_cutoff} below. The correction \eqref{eq: finite miner logNcorrection} then follows by an argument that $\varepsilon=N^{-1}$ which we now make precise.

\subsubsection{Cutoff due to discreteness $(\varepsilon=N^{-1})$}
Recall the finite miner distribution function $\varphi^N$, defined by \eqref{eq: finite N-miner speed and distribution function}, which has initial condition $\varphi_n^N(0) = 1$ for all $n \in \N_0$. Then at any time $t \geq 0$, there exists a unique height $K_t \in \N_0$ such that
\begin{align*}
    \varphi^N_k(t)  
    \begin{cases}
        = 1, \quad \text{if } k \geq K_t\ ,\\
        < 1, \quad \text{if } k < K_t\ ,
    \end{cases}
\end{align*}
which follows from the finite miner transitions \eqref{dd1} and \eqref{dd2} defined in \Cref{sec: height dynamics}. Hence we define $K_t := \min \{ k \in \N_0 \; | \; \varphi^N_k(t) = 1 \}$ which corresponds to the maximum height of all $N$ miners at time $t$. Due to the initial condition, $K_t$ is monotone non-decreasing.

At any time $t \geq 0$, at the leading edge of the front there is a group of miners of equal and maximum height $K_t$. Then there exists some unique later time $t+\delta$ such that one miner at this maximum height $K_t$ will mine a new block and be at height $K_{t+\delta} = K_t + 1$, i.e.
\begin{align*}
\varphi^N_{k}(t+\delta) = 
    \begin{cases}
        1 - \tfrac{1}{N}, \quad &\text{if $k = K_t$},\\
        1, \quad &\text{if $k \geq K_t + 1$}.
    \end{cases}
\end{align*}
Hence, in the language of \cite{BrunetDerrida_1997}, it makes sense that introducing a cutoff level of $\varphi^N_n = 1-\varepsilon$ corresponds exactly to $\varepsilon = N^{-1}$. Therefore, we define the $\varepsilon$-cutoff modified system of ODEs
\begin{align}\label{eq:BC_eps_cutoff}
    \dot{\varphi}_n = -\lambda(\varphi_n - \varphi_{n-1}) - \omega_\varepsilon \varphi_n (1 - \varphi_n) \Theta(1-\varepsilon-\varphi_n),
\end{align}
where $\omega_\varepsilon:=\omega(1+\varepsilon)$ due to \eqref{eq: avg drift ODEs}, and the cut-off function is defined by $\Theta(x):= \mathds{1}_{x > 0}$. We will use this cutoff system of ODEs \eqref{eq:BC_eps_cutoff} to derive \eqref{eq: finite miner logeps correction} which then, due to the correspondence $\varepsilon=N^{-1}$, will give the final result \eqref{eq: finite miner logNcorrection}.

\subsubsection{Approximate solutions in regions}
In what follows, we recall $v_\varepsilon$ as the travelling wave speed for the $\varepsilon$-cutoff system of ODEs \eqref{eq:BC_eps_cutoff}. A simple argument in \cite{BrunetDerrida_1997} gives 
\begin{align}\label{ineq: lambda<v_eps<=v_0}
    0 < \lambda < v_\varepsilon \leq v_0, \quad \text{for all } 0 \leq \varepsilon \ll 1.
\end{align}
If we convert the $\varepsilon$-cutoff system of ODEs \eqref{eq:BC_eps_cutoff} to travelling wave coordinates, the travelling wave shape $\Phi_{v_\varepsilon}(z) := \varphi_{z+v_\varepsilon t}(t)$ satisfies the boundary value problem
\begin{equation}
\label{eq:truncatedDDE}
    \left\{ 
    \begin{aligned}
        &v_\varepsilon \Phi_{v_\varepsilon}'(z) = \lambda(\Phi_{v_\varepsilon}(z)-\Phi_{v_\varepsilon}(z-1) + \omega_\varepsilon \Phi_{v_\varepsilon}(z)(1-\Phi_{v_\varepsilon}(z))\Theta(1-\varepsilon-\Phi_{v_\varepsilon}), \quad z \in \R,\\
        &\Phi_{v_\varepsilon}(-\infty)=0, \quad \Phi_{v_\varepsilon}(\infty) = 1.
    \end{aligned}
    \right.
\end{equation}
Following \cite{BrunetDerrida_1997}, we now consider the travelling wave shape $\Phi_{v_\varepsilon}$ in three regions:

\setlength{\itemsep}{6pt}
\begin{itemize}
    \item []\textbf{Region I}  where $0\le \Phi_{v_\varepsilon}(z) < 1-\varepsilon'$.
    \item []\textbf{Region II} where $1-\varepsilon' < \Phi_{v_\varepsilon}(z) < 1-\varepsilon$,\quad $\varepsilon<\varepsilon'$.
    \item []\textbf{Region III} where $1-\varepsilon < \Phi_{v_\varepsilon}(z) < 1$.
\end{itemize}

\begin{lm}[ Region I]
In Region I, the shape of the front $\Phi_{v_\varepsilon}$ coincides with $\Phi_{v_0}$, as defined in \eqref{eq:twsol_decay2}, so we have
\begin{align*}
    \Phi_{v_\varepsilon}(z) \simeq \Phi_{v_0}(z) \simeq 1 - Az\mathrm{e}^{-\gamma_0 z} \quad \text{as } z \to \infty.
\end{align*}
\end{lm}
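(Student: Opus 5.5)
This is a formal matched-asymptotics statement in the spirit of \cite{BrunetDerrida_1997}, so the plan is a perturbative argument in the small parameters $\varepsilon$ and $v_0-v_\varepsilon$.

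The first step is to compare \eqref{eq:truncatedDDE} with \eqref{eq:BCtw} in Region I. There $\Phi_{v_\varepsilon}<1-\varepsilon'<1-\varepsilon$, so the cutoff $\Theta(1-\varepsilon-\Phi_{v_\varepsilon})$ equals $1$ identically. The only differences from the uncut problem are therefore $\omega_\varepsilon=\omega(1+\varepsilon)$ versus $\omega$ and $v_\varepsilon$ versus $v_0$. By \eqref{ineq: lambda<v_eps<=v_0}, and anticipating that $v_0-v_\varepsilon\to0$ (only logarithmically, but to zero), both are perturbations of the coefficients of a delay ODE whose solution is determined up to translation by its boundary conditions. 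Fixing the translation by a normalisation such as $\Phi(0)=p$, I would argue by continuous dependence on parameters, on compact $z$-sets, that $\Phi_{v_\varepsilon}\to\Phi_{v_0}$ as $\varepsilon\to0$.

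The second step identifies the tail. Deep in Region I but with $1-\Phi$ still small, $1-\Phi_{v_\varepsilon}$ solves, to leading order, the linearisation \eqref{eq:BCtw_linear}. The minimal-speed analysis of \Cref{ss: min_speed_selection} shows that at $v=v_0$ the double root $\gamma_0$ of \eqref{eq:v(gamma)} gives the degenerate mode \eqref{eq:twsol_decay2}. Matching to this then yields $1-\Phi_{v_\varepsilon}\simeq Az\mathrm{e}^{-\gamma_0 z}$, with $A$ inherited from $\Phi_{v_0}$.

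The main obstacle is uniformity. For $v_\varepsilon<v_0$ the roots of $v(\gamma)=v_\varepsilon$ become complex, $\gamma_0\pm i\delta$ with $\delta\sim\sqrt{2(v_0-v_\varepsilon)/v''(\gamma_0)}$. The linear solution is then $A'\mathrm{e}^{-\gamma_0 z}\sin(\delta z)/\delta$, which agrees with $Az\mathrm{e}^{-\gamma_0 z}$ only while $\delta z\ll1$. I would therefore define Region I as the range where $\delta z\ll1$, equivalently choosing $\varepsilon'$ with $\log(1/\varepsilon')\ll\log(1/\varepsilon)$. Then $\sin(\delta z)/\delta=z(1+\order(\delta^2z^2))$ gives the claimed asymptotic uniformly, which also sets up the Region II matching.
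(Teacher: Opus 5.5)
This is essentially the paper's approach made explicit. The paper gives no separate proof: it simply takes, as in Brunet--Derrida, that the cutoff is inactive in Region I, so the profile is the uncut $v_0$ front with the degenerate tail \eqref{eq:twsol_decay2}. Your step 3 spells out the window $\delta z\ll 1$ that the paper uses only implicitly when it matches $D=0$, $C=A/\beta$ into Region II. Your translation of that window into $\log(1/\varepsilon')\ll\log(1/\varepsilon)$ is a self-consistency check rather than a derivation, because $\delta\asymp 1/\log(1/\varepsilon)$ only comes out of the later Region II--III matching.

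In step 1, the uncut boundary value problem has no monotone solution for $v_\varepsilon<v_0$, so it is not the right object to perturb. What you should perturb is the branch leaving $\Phi=0$. The equation is retarded in $z$ (only $\Phi(z-1)$ enters) and the cutoff is off in Region I, so there $\Phi_{v_\varepsilon}$ is exactly this branch, unique up to translation, for the uncut equation with parameters $(v_\varepsilon,\omega_\varepsilon)$. Continuous dependence of this branch on the parameters is what gives $\Phi_{v_\varepsilon}\to\Phi_{v_0}$ on compacts.
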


In Regions II and III, the front $\Phi_{v_\varepsilon}$ is close to $1$ and its leading order asymptotic behaviour for large $z$ is given by the solutions to the linearised problem \eqref{eq:truncatedDDE}, which become
\begin{align}
    v_\varepsilon \Phi_{v_\varepsilon}'(z) &= \lambda(\Phi_{v_\varepsilon}(z)-\Phi_{v_\varepsilon}(z-1)) + \omega(1+\varepsilon) (1-\Phi_{v_\varepsilon}(z)), &&\text{in Region II}, \label{eq: s_eps ode region2}\\
    v_\varepsilon \Phi_{v_\varepsilon}'(z) &= \lambda(\Phi_{v_\varepsilon}(z)-\Phi_{v_\varepsilon}(z-1)), &&\text{in Region III}, \label{eq: s_eps ode region3}
\end{align}
and are connected by the intermediate boundary condition $\Phi_{v_\varepsilon} (z_\varepsilon) := 1-\varepsilon$. In what follows we define the shift in speed as 
\begin{equation*}
\Delta:=v_0-v_\varepsilon \geq 0,
\end{equation*}
with nonnegativity due to \eqref{ineq: lambda<v_eps<=v_0}, and assume $\Delta \gg \varepsilon$ which we will justify {\it a posteriori} from our final result \eqref{eq: epsilon correction}.

\begin{lm}[Region II]
For sufficiently small $\Delta$, the solution in Region II is given to leading order by
\begin{align}\label{eq: R2 sol}
\Phi_{v_\varepsilon} (z) \simeq
         1 - C \mathrm{e}^{-\alpha z}  \sin (\beta z + D)  \quad &\text{as } z \to \infty,
\end{align}
where $C$ and $D$ are constants and $\alpha =\gamma_0 + \Theta{(\Delta)}$ and $\beta = \Theta{(\Delta^{1/2})}$.
\end{lm}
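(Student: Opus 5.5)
We work with the linear equation \eqref{eq: s_eps ode region2} for $\psi(z):=1-\Phi_{v_\varepsilon}(z)$:
\begin{equation*}
v_\varepsilon\psi'(z)=\lambda(\psi(z)-\psi(z-1))-\omega(1+\varepsilon)\psi(z).
\end{equation*}
This is a constant-coefficient linear differential-difference equation, so we look for exponential modes $\psi(z)=\mathrm{e}^{-\gamma z}$ with $\gamma\in\mathbb{C}$. Substituting gives the characteristic equation $v_\varepsilon\gamma=\lambda(\mathrm{e}^{\gamma}-1)+\omega(1+\varepsilon)$, that is,
\begin{equation*}
v_\varepsilon=v(\gamma)+\frac{\omega\varepsilon}{\gamma},
\end{equation*}
with $v(\gamma)$ as in \eqref{eq:v(gamma)}. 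Since we assume $\Delta\gg\varepsilon$, the term $\omega\varepsilon/\gamma$ is of lower order than $\Delta$ near $\gamma_0$. To leading order we therefore solve $v(\gamma)=v_0-\Delta$.

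\textbf{Step 1: expand near the double root.} Recall from \Cref{ss: min_speed_selection} that $v'(\gamma_0)=0$ and $v''(\gamma_0)>0$, the latter by strict convexity. Writing $\gamma=\gamma_0+\delta$ and Taylor expanding,
\begin{equation*}
v_0+\tfrac12 v''(\gamma_0)\delta^2+\order(\delta^3)=v_0-\Delta,
\end{equation*}
so $\delta^2\simeq-2\Delta/v''(\gamma_0)$. Because $\Delta>0$, the two roots are complex conjugates:
\begin{equation*}
\gamma_\pm=\gamma_0\pm i\beta+\order(\Delta),\qquad \beta=\sqrt{2\Delta/v''(\gamma_0)}=\Theta(\Delta^{1/2}).
\end{equation*}
The cubic term shifts the real part only at order $\delta^2=\order(\Delta)$, and the $\varepsilon$-term contributes $\order(\varepsilon/\Delta^{1/2})=o(\Delta^{1/2})$. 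Hence $\alpha=\mathrm{Re}\,\gamma_\pm=\gamma_0+\order(\Delta)$. A rigorous version would apply the implicit function theorem to $F(\gamma,\Delta,\varepsilon)=v(\gamma)-v_0+\Delta-\omega\varepsilon/\gamma$ in the variable $\delta/\sqrt{\Delta}$.

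\textbf{Step 2: select the relevant modes.} The characteristic equation has other complex roots. Since $\mathrm{e}^\gamma$ is entire, these roots accumulate far from $\gamma_0$ with large imaginary parts, and they are the main technical nuisance. Region II lies at large $z$ adjacent to Region I, whose solution is the near-critical form $1-Az\mathrm{e}^{-\gamma_0 z}$. We therefore argue, following \cite{BrunetDerrida_1997}, that only the pair $\gamma_\pm$ near $\gamma_0$ is excited at leading order. Roots with real part much different from $\gamma_0$ either decay much faster, and so are negligible at large $z$, or would be inconsistent with matching to Region I. To justify this I would use Rouché's theorem on a disk of radius $\order(\Delta^{1/2})$ around $\gamma_0$ to show exactly two roots lie there. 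For the remaining roots I would argue heuristically that they are not excited. This mode selection is the step I expect to be the main obstacle.

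\textbf{Step 3: form the real solution.} Since $\Phi_{v_\varepsilon}$ is real, we combine the conjugate pair:
\begin{equation*}
\psi(z)=c\,\mathrm{e}^{-\gamma_+ z}+\bar c\,\mathrm{e}^{-\gamma_- z}=C\mathrm{e}^{-\alpha z}\sin(\beta z+D),
\end{equation*}
with real constants $C,D$. This gives \eqref{eq: R2 sol}. As a consistency check, as $\beta\to0$ the expression reduces to $C\mathrm{e}^{-\gamma_0 z}(\beta z+D)$, which reproduces the degenerate Region I form $Az\mathrm{e}^{-\gamma_0 z}$ and fixes the matching of $C$ and $D$.
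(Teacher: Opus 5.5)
Your proposal is correct and follows essentially the paper's route: linearise, expand the characteristic equation about the double root $\gamma_0$ to obtain a complex-conjugate pair with imaginary part $\beta=\sqrt{2\Delta/v''(\gamma_0)}$, and combine the pair into the real sine form. Your $\beta$ is identical to the paper's $\sqrt{2\gamma_0\Delta/v_0}$ because $v''(\gamma_0)=v_0/\gamma_0$, and your mode-selection remarks go beyond what the paper attempts.

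You only assert $\alpha=\gamma_0+\order(\Delta)$ rather than the stated $\Theta(\Delta)$, but your remark that the cubic term enters the real part at order $\Delta$ is the more accurate one. The paper's $\gamma_0-\alpha\sim\Delta/v_0$ drops exactly that contribution: the true leading shift is $(1-\gamma_0/3)\Delta/v_0$, which vanishes when $\gamma_0=3$. Moreover, only $\alpha=\gamma_0+\order(\Delta)$ is used in the subsequent matching.
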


\begin{proof}
Consider an exponential solution $\Phi_{v_\varepsilon}(z) = 1 - B\mathrm{e}^{-\gamma z}$ for Re$(\rho)>0$, which for \eqref{eq: s_eps ode region2} yields the associated characteristic equation
\begin{align}\label{eq: chareq R2}
    \lambda \mathrm{e}^{\gamma} - (v_0-\Delta) \gamma = \lambda - \omega(1+\varepsilon).
\end{align}

In the limit $\Delta \to 0$ we will have $\gamma=\gamma(\Delta) \to \gamma_0$ so we may define, without loss of generality, that $\gamma = \gamma_0 + \delta$ such that $\lim_{\Delta \to 0} \delta(\Delta) = 0$. Expanding the left-hand-side of \eqref{eq: chareq R2} as a series in $\delta$ and $\Delta$ we obtain up to leading order that
\begin{align}\label{eq: chareq R2 delta expand}
    \gamma_0 \Delta + \tfrac{1}{2}v_0 \delta^2 + \delta \Delta +\omega\varepsilon + \mathcal{O}(\delta^3) = 0,
\end{align}
where we made use of the definitions of $v_0$ and $\gamma_0$ in \eqref{eq:v0=lambdaExpgamma0} and \eqref{eq:gamma_0equation}. Writing $\delta= a \pm i b$, for $a \in \R$ and $b \in \R_{\geq 0}$, we obtain from \eqref{eq: chareq R2 delta expand} that
\begin{equation*}
\begin{split}
    \tfrac{1}{2}v_0(a^2-b^2) &= -(a+\gamma_0)\Delta -\omega\varepsilon + \order(|\delta|^3) \\
    a &= -\tfrac{\Delta}{v_0} + \order(|\delta|^3).
\end{split}
\end{equation*}
Truncating the order $\delta^3$ terms and using $\Delta \gg \varepsilon$, we solve for $b \in \R_{\geq 0}$ and obtain
 \begin{align*}
    b = \sqrt{\tfrac{2}{v_0}\gamma_0\Delta - \tfrac{\Delta^2}{v_0^2}} \sim \sqrt{\tfrac{2\gamma_0}{v_0}}\Delta^{1/2} + \order(\Delta^{3/2}),
\end{align*}
and therefore, in the limit $\Delta \to 0$, we obtain to leading order that
\begin{align}\label{eq: rho(Delta) leading order}
    \gamma \sim \gamma_0 -\tfrac{\Delta}{v_0} \pm i \sqrt{\tfrac{2\gamma_0}{v_0}} \Delta^{1/2}.
\end{align}
Writing $\gamma = \alpha \pm i \beta$ for $\alpha \in \R$ and $\beta \in \R_{\geq 0}$, we assume from here on, in view of \eqref{eq: rho(Delta) leading order}, that $\Delta$ is sufficiently small such that $\alpha>0$. Hence, from \eqref{eq: rho(Delta) leading order}, we obtain
\begin{align}
\label{eq:alphabetadef}
    \gamma_0 - \alpha \sim \frac{\Delta}{v_0} \: \text{ and } \: \beta \sim \sqrt{\tfrac{2\gamma_0}{v_0}} \Delta^{1/2} \quad \text{as } \Delta \to 0,
\end{align}
so that the solution of \eqref{eq: s_eps ode region2} up to leading order in $\Delta^{1/2}$ is given as stated.
\end{proof}

\begin{lm}[Region III]
For sufficiently small $\Delta$ the solution in Region III is given in leading order by
\begin{align}\label{eq: R3 sol}
\Phi_{v_\varepsilon} (z) \simeq
         1 - \varepsilon\, \mathrm{e}^{-\rho(z-z_\varepsilon)} \quad &\text{as } z \to \infty,
\end{align}
with
\begin{equation*}
\varrho \sim \varrho_0  - \frac{\varrho_0 \Delta}{\lambda +v_0(\varrho_0-1)},
\end{equation*}
and $\varrho_0$ is the real positive root of
\begin{align}\label{eq: chareq varrho_0}
    \lambda \mathrm{e}^{\varrho_0} - v_0 \varrho_0 = \lambda.
\end{align}
\end{lm}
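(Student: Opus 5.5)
The proof will mirror the Region II lemma: substitute an exponential ansatz into the Region III linear equation \eqref{eq: s_eps ode region3} and perturb the resulting characteristic equation in $\Delta$. In Region III the reaction term is switched off by the cut-off $\Theta(1-\varepsilon-\Phi_{v_\varepsilon})$, so \eqref{eq: s_eps ode region3} is linear and exact there. Writing $\Phi_{v_\varepsilon}(z) = 1 - B\mathrm{e}^{-\varrho z}$ and inserting it gives
\begin{align*}
    -v_\varepsilon\varrho = \lambda(1-\mathrm{e}^{\varrho}),\qquad\text{i.e.}\qquad \lambda\mathrm{e}^{\varrho} - (v_0-\Delta)\varrho = \lambda .
\end{align*}
The constant $B$ is fixed by the matching condition $\Phi_{v_\varepsilon}(z_\varepsilon)=1-\varepsilon$, which gives $B=\varepsilon\mathrm{e}^{\varrho z_\varepsilon}$ and hence the form \eqref{eq: R3 sol}.

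Next I would analyse the roots at $\Delta=0$. The function $F(\varrho):=\lambda\mathrm{e}^{\varrho}-v_0\varrho-\lambda$ satisfies $F(0)=0$ and $F'(0)=\lambda-v_0<0$, because $v_0=\lambda\mathrm{e}^{\gamma_0}>\lambda$. It is also strictly convex and tends to $+\infty$. Hence $F$ has exactly one positive real root, and this is $\varrho_0$ in \eqref{eq: chareq varrho_0}. The root $\varrho=0$ is discarded because it does not decay, so it is incompatible with the right boundary condition $\Phi(\infty)=1$. Complex roots with positive real part would produce oscillations of $1-\Phi$ about zero, which violate $\Phi\le 1$, so they are discarded as well. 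This leaves $\varrho_0$ as the selected decay rate.

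Finally I would perturb in $\Delta$. Set $\varrho=\varrho_0+\eta$ and expand $G(\varrho,\Delta):=\lambda\mathrm{e}^{\varrho}-(v_0-\Delta)\varrho-\lambda$ to first order:
\begin{align*}
    (\lambda\mathrm{e}^{\varrho_0}-v_0)\,\eta + \varrho_0\Delta = 0 .
\end{align*}
The characteristic equation gives $\lambda\mathrm{e}^{\varrho_0}=\lambda+v_0\varrho_0$, so the coefficient of $\eta$ is $\lambda+v_0(\varrho_0-1)$. Therefore
\begin{align*}
    \eta\sim-\frac{\varrho_0\Delta}{\lambda+v_0(\varrho_0-1)},
\end{align*}
which is the stated expansion.

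The main obstacle is to justify this implicit-function step, namely to show that $\partial_\varrho G(\varrho_0,0)=F'(\varrho_0)\neq0$. Because $F$ is strictly convex, $F(0)=F(\varrho_0)=0$, and $\varrho_0>0$, we get $F'(\varrho_0)>0$. This means the denominator $\lambda+v_0(\varrho_0-1)$ is strictly positive. The implicit function theorem then gives a smooth real branch $\varrho(\Delta)$ for small $\Delta\ge0$ with the stated leading-order behaviour, and $\varrho(\Delta)$ stays positive.

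Unlike Region II, no double root arises here, so the correction is linear in $\Delta$ rather than of order $\Delta^{1/2}$, and $\varrho$ stays real.
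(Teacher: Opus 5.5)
Your proposal is correct and follows the paper's route: an exponential ansatz in the reaction-free Region III equation, the characteristic equation $\lambda\mathrm{e}^{\varrho}-(v_0-\Delta)\varrho=\lambda$, and a first-order expansion about $\varrho_0$ with coefficient $\lambda\mathrm{e}^{\varrho_0}-v_0=\lambda+v_0(\varrho_0-1)$. The paper shows the perturbed root stays real by splitting $\delta=a\pm ib$ and noting that the imaginary part forces $b=0$ because $\lambda\mathrm{e}^{\varrho_0}\neq v_0$ (that is, $\varrho_0\neq\gamma_0$); this is the same simple-root fact that you obtain via convexity and the implicit function theorem, which also gives you the sign of the denominator and the $z-z_\varepsilon$ normalisation.
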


\begin{proof}
Consider again an exponential solution $\Phi_{v_\varepsilon}(z) = 1 - \varepsilon\,\mathrm{e}^{-\varrho z}$ for Re$(\varrho)>0$, which for \eqref{eq: s_eps ode region3} yields the associated characteristic equation
\begin{align}\label{eq: chareq R3}
    \lambda \mathrm{e}^{\varrho} - (v_0-\Delta) \varrho = \lambda.
\end{align}
In the limit $\Delta \to 0$ we will have $\varrho\to \varrho_0$ where $\varrho_0$ is the real positive root of \eqref{eq: chareq varrho_0}, which exists since $0 < \lambda < v_0$ due to the inequality \eqref{ineq: lambda<v_eps<=v_0}. We thus write $\varrho = \varrho_0 + \delta$ such that $\lim_{\Delta \to 0} \delta(\Delta) = 0$, and expand the left-hand-side of \eqref{eq: chareq R3} as a series in $\delta$ and $\Delta$. Using \eqref{eq: chareq varrho_0} we obtain
\begin{align}\label{eq: chareq R3 delta expand}
    (\lambda\e^{\varrho_0}-v_0)\delta + \tfrac{1}{2}\lambda\e^{\varrho_0} \delta^2 + \varrho_0\Delta + \Delta \delta + \order(\delta^3) = 0.
\end{align}
Writing $\delta = a \pm ib$ for $a \in \R$ and $b \in \R_{\geq 0}$ such that $a(\Delta), b(\Delta) \to 0$ as $\Delta \to 0$ and ignoring terms of $\order(\delta^3)$, we obtain for the imaginary part of \eqref{eq: chareq R3 delta expand} that
\begin{align*}
    ib ((a+1)\lambda\e^{\varrho_0}-v_0+\Delta)  = 0.
\end{align*}
Because $a\to 0$ as $\Delta\to 0$ and $v_0=\lambda \e^{\gamma_0}$ with $\gamma_0 \neq \varrho_0$ for $\omega>0$, we must have $b=0$.

For the real part of \eqref{eq: chareq R3 delta expand} we find that
\begin{equation}
\label{eq:aIII}
a \sim - \frac{\varrho_0 \Delta}{\lambda +v_0(\varrho_0-1)} + \order{(\Delta^2)},
\end{equation}
so that the solution of \eqref{eq: s_eps ode region3} up to leading order in $\Delta$ is given as stated.
\end{proof}

\subsubsection{Matching conditions between regions} 
There is a transition point between Region I and Region II where $\Phi_{v_\varepsilon}$ is continuous. Since in Region I we have $\Phi_{v_\varepsilon}(z) \simeq \Phi_{v_0}(z) \simeq 1 - A z \mathrm{e}^{-\gamma_0 z}$ we obtain, to leading order in $\Delta^{1/2}$ that $D=0$ and $C = A/\beta$ when matching with the solution \eqref{eq: R2 sol} in Region II.

Between region II and II we impose continuity of $\Phi_{v_\varepsilon}$ and its first derivative at the point $z = z_\varepsilon$, and obtain from \eqref{eq: R2 sol} and \eqref{eq: R3 sol} that
\begin{equation}
\begin{split}
    A \mathrm{e}^{-\alpha z_\varepsilon} \sin(\beta z_\varepsilon) &= \varepsilon \beta, \\
    A \mathrm{e}^{-\alpha z_\varepsilon} \left( -\alpha\sin(\beta z_\varepsilon) + \beta \cos(\beta z_\varepsilon) \right) &= -(\varrho_0+a) \varepsilon \beta,
\end{split}
\label{eq: R2&3 cond: continuity}
\end{equation}
with $\alpha=\gamma_0 + \order(\Delta)$ and $a$ given by \eqref{eq:aIII}, leading to the relation
\begin{align}\label{eq: eps_correction_tan_relation_1}
    \gamma_0 - \frac{\beta}{\tan(\beta z_\varepsilon)} - \varrho_0 = \order(\Delta).
\end{align}
Recall from \eqref{eq:alphabetadef} that $\beta = \order (\Delta^{1/2})$ and so up to leading order in $\Delta^{1/2}$ \eqref{eq: eps_correction_tan_relation_1} becomes
\begin{align}\label{eq: eps_correction_tan_relation_2}
    \tan(\beta z_\varepsilon) = \frac{-\beta}{\varrho_0 - \gamma_0} < 0,
\end{align}
since $\beta > 0$ and one can show that $\varrho_0 > \gamma_0$ from their definitions \eqref{eq:gamma_0equation} and \eqref{eq: chareq varrho_0}.

In order to satisfy \eqref{eq: eps_correction_tan_relation_2} we require 
\begin{align}\label{eq: tan_beta asymp}
    \beta z_\varepsilon \simeq \pi \quad \text{and} \quad \pi - \beta z_\varepsilon \simeq \tfrac{\beta}{\varrho_0 - \gamma_0} \quad \text{as } \beta \to 0.
\end{align}
Then recall \eqref{eq: R2&3 cond: continuity} and expanding around $\beta z_\varepsilon \simeq \pi$ and using \eqref{eq: tan_beta asymp} gives $A \mathrm{e}^{-\gamma_0 z_\varepsilon}\tfrac{\beta}{\varrho_0 - \gamma_0} \simeq \varepsilon \beta$ as $\beta \to 0$ which, after solving for $z_\varepsilon$, corresponds to
\begin{align*}
    z_\varepsilon \simeq - \tfrac{1}{\gamma_0} \log \varepsilon -\tfrac{1}{\gamma_0}  \left( \tfrac{\varrho_0-\gamma_0}{A} \right)  \quad \text{as } \beta \to 0.
\end{align*}
For small $\varepsilon$ the constant is irrelevant and the condition $\beta z_\varepsilon \simeq \pi$ in \eqref{eq: tan_beta asymp} gives
\begin{align*}
    \beta \simeq \frac{\pi \gamma_0}{\log \tfrac{1}{\varepsilon}} \quad \text{as } \beta \to 0.
\end{align*}
Then, as $\beta$ is small, the shift in speed $\Delta = v_0 - v_\varepsilon$ is obtained via series expansion, up to leading order, and given by
\begin{align}\label{eq: epsilon correction}
    v_0-v_\varepsilon \simeq \frac{1}{2} v''(\gamma_0) \beta^2 \simeq \frac{\pi^2 \gamma_0^2}{2} v''(\gamma_0) \frac{1}{(\log \varepsilon)^2} \quad \text{as } \varepsilon \to 0,
\end{align}
which is the result \eqref{eq: finite miner logeps correction} and combining with the identification $\varepsilon = N^{-1}$ concludes the proof of \Cref{thm:Ncorrection}. Note that the analysis above shows that $\Delta = \Theta(1/(\log \varepsilon)^2)$ for sufficiently small $\varepsilon$, which justifies the earlier assumption that $\Delta \gg \varepsilon$ in the derivation of \eqref{eq: rho(Delta) leading order}.


\begin{figure}[H]
    \centering
    \includegraphics[width=0.6\textwidth]{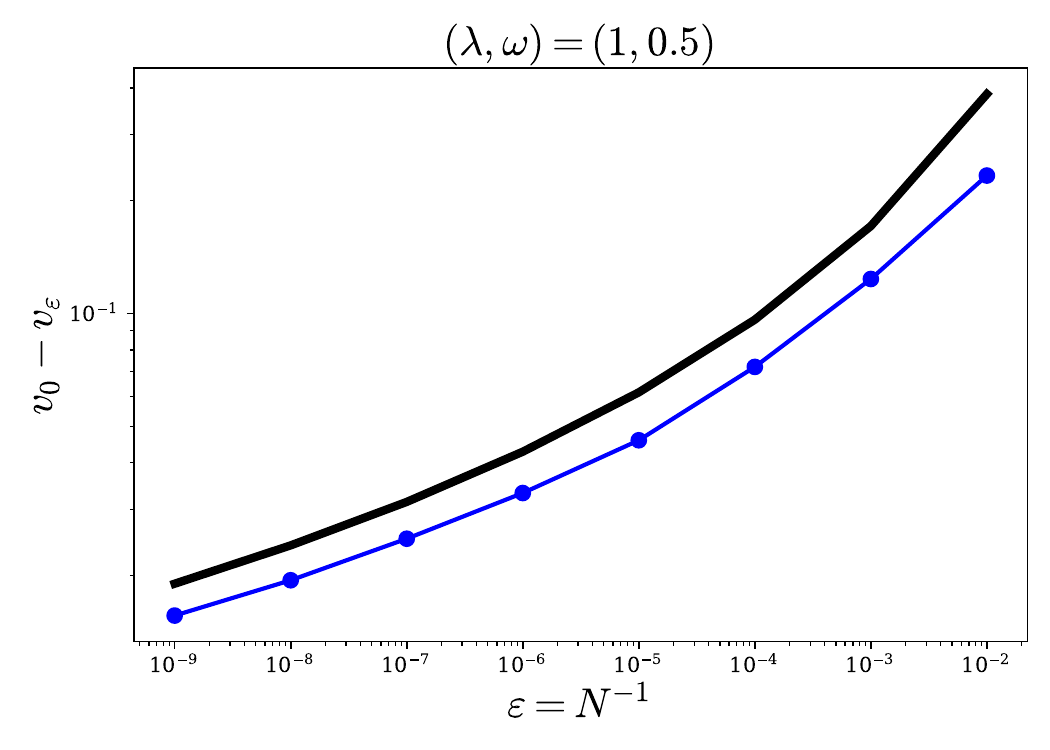}
    \caption{
    Numerical simulations demonstrating the logarithmic correction \eqref{eq: epsilon correction} with $\varepsilon = N^{-1}$. In blue dotted: the numerical simulation of $v_0 - v_{N^{-1}}$ with $v_0$ defined by \eqref{eq:v0=lambdaExpgamma0}. In black: the correction $\frac{\pi^2 \gamma_0^2}{2} v''(\gamma_0) \frac{1}{(\log \varepsilon)^2}$ with $\gamma_0$ defined by \eqref{eq:gamma_0equation}.
    }
    \label{fig: epsilon correction}
\end{figure}

\begin{appendix}

\section{Proofs}

\subsection{Proof of \Cref{prop:BC}}\label{proof: BC} The initial condition $\varphi_n(0)=1$ follows from \eqref{eq:BCcdf} and the initial condition $\theta_i(0) = \delta_{i,0}$ for $i \geq 0$. For $n=0$ we have $\varphi_0(t) = \theta_0(t)$ and so the result follows from \eqref{eq:theta1} and the boundary condition $\varphi_{-1} \equiv 0$. For $n \geq 1$, we use \eqref{eq:theta2}, \eqref{eq:BCcdf}, the normalisation $\sum_{i \geq 0} \theta_i(t) = 1, \: t \geq 0$, and compute
\begin{equation*}
\begin{split}
    \frac{d}{dt}\varphi_n(t) 
    &= -\lambda \left( \sum_{i=0}^n \theta_i(t) - \sum_{i=0}^n \theta_{i-1}(t) \right) - \omega \sum_{i=0}^n \theta_i(t) \left( \sum_{j=i+1}^\infty \theta_j(t) - \sum_{j=0}^{i-1} \theta_j(t) \right)\\
    &= -\lambda (\varphi_n(t) - \varphi_{n-1}(t)) - \omega \sum_{i=0}^n \theta_i(t) \left( 1 - \theta_i(t) - 2\sum_{j=0}^{i-1} \theta_j(t) \right)\\
    &= -\lambda (\varphi_n(t) - \varphi_{n-1}(t)) - \omega \varphi_n(t) + \omega \left( \sum_{i=0}^n \theta_i(t)^2 + 2\sum_{i=1}^{n} \theta_i(t) \left( \sum_{j=0}^{i-1} \theta_j(t)  \right) \right)\\
    &= -\lambda (\varphi_n(t) - \varphi_{n-1}(t)) - \omega \varphi_n(t) + \omega \left( \sum_{i=0}^n \theta_i(t) \right)^2\\
    &= -\lambda (\varphi_n(t) - \varphi_{n-1}(t)) - \omega \varphi_n(t)(1 - \varphi_n(t)), \quad t \geq 0.
\end{split}
\end{equation*}
The $n=0$ solution \eqref{eq:BCinitsol} is obtained by separation of variables using $\varphi_{-1} \equiv 0$.
\qed

\subsection{Proof of \Cref{lm: varphi_n properties}}\label{proof: BCsolproperties}

\subsubsection{Proof of $\mathrm{(i)}$} We proceed by induction on $n \geq 0$. The case $n=0$ is clear from the explicit solution $\varphi_0$. For the inductive step, suppose $\varphi_{n-1}(t) \in [0,1]$ for all $t\geq0$ for some $n \geq 1$. We show  $\varphi_{n}(t) \in [0,1]$ for all $t\geq0$. This can be done by looking at the derivative $\dot \varphi_n$ on the boundary of $[0,1]$. From \eqref{eq:BC}, it follows that
\begin{align}
    &\dot \varphi_n  = -\lambda (1-\varphi_{n-1}) \in [-\lambda, 0] \; \text{if $\varphi_n = 1$},\quad \text{and} \label{der1}\\
    &  \quad \dot \varphi_n  = \lambda \varphi_{n-1} \in [0,\lambda]  \; \text{if $\varphi_n = 0$},\label{der2}
\end{align}
since $\varphi_{n-1}(t) \in [0,1], \: t \geq 0$ by induction hypothesis. From (\ref{der1}), if $\varphi_n(t) = 1$ then $\varphi_n(t+\varepsilon) \leq 1$ for some small $\varepsilon$. This gives $\varphi_n(t) \leq 1$ for all $t\geq 0$. From (\ref{der2}), if $\varphi_n(t) = 0$ then $\varphi_n(t+\varepsilon) \geq 0$ for some small $\varepsilon$. This gives $\varphi_n(t) \geq 0$ for all $t\geq 0$. We conclude $\varphi_n(t) \in [0,1]$ for all $t\geq 0$. The result extends to all $n \geq 0$ by induction.

\subsubsection{Proof of $\mathrm{(ii)}$} Define $\delta_n(t) := \varphi_n(t) - \varphi_{n-1}(t)$. We prove  $\delta_n(t) \geq 0$ for all $t\geq0$ by induction on $n \geq 0$. The case $n=0$ is clear from the explicit solution $\varphi_0$ and $\varphi_{-1} \equiv 0$. For the inductive step, suppose $\delta_{n-1}(t) \geq 0$ for all $t\geq0$ for some $n \geq 1$. We show $\delta_{n}(t) \geq 0$ for all $t\geq0$. From \eqref{eq:BC}, it follows that
\begin{equation*}
    \dot{\delta}_n(t) = -(\lambda+\omega)\delta_n(t) + \lambda \delta_{n-1}(t) + \omega\delta_n(t)(\varphi_n(t) + \varphi_{n-1}(t)),
\end{equation*}
and therefore $\dot{\delta}_n = \lambda \delta_{n-1}$ if $\delta_n = 0$. We know $\delta_n(0) = 0$ for all $n\geq0$ by the initial condition $\varphi_n(0) = 1$. Suppose $\delta_n(t) = 0$, then $\dot{\delta}_n(t) = \lambda \delta_{n-1}(t) \geq 0$ since $\delta_{n-1}(t) \geq 0$ by induction hypothesis. Hence $\delta_n(t) \geq 0$ for all $t\geq 0$. The result extends to all $n \geq 0$ by induction.

\subsubsection{Proof of $\mathrm{(iii)}$} Follows from (i), (ii), and observing the right-hand-side of \eqref{eq:BC}.

\subsubsection{Proof of smoothness and uniqueness}  We proceed by induction on $n \geq 0$. The result holds for $n=0$ due to the explicit solution $\varphi_0$. For the induction step, assume the existence of a unique smooth solution $\varphi_{n-1}$ of \eqref{eq:BC} on $t \geq 0$ for some $n \geq 1$. Let $y := \varphi_n$ and denote the right-hand-side of \eqref{eq:BC} by
\begin{align*}
    F(t,y) = -\lambda(y - \varphi_{n-1}(t)) - \omega y (1 - y).
\end{align*}
By Property (i), we infer $-\lambda-\omega < F(t,y) \leq 0$ for all $(t,y) \in \R_{\geq 0} \times [0,1]$. Then $F$ is smooth and bounded in $y \in [0,1]$  so is globally Lipschitz continuous in $y$. Secondly, $F$ is smooth in $t \geq 0$ by the induction hypothesis. The result follows from the Picard-Lindel\"of theorem. 

\subsubsection{Proof of $\mathrm{(iv)}$} The initial solution \eqref{eq:BCinitsol} is asymptotically
    \begin{align*}
        \varphi_0(t) \sim \frac{\lambda+\omega}{\lambda} \mathrm{e}^{-(\lambda+\omega)t} \quad \text{as $t \to \infty$}.
    \end{align*}
    We proceed by induction on $n \geq 1$ and assume that $\varphi_{n-1}$ is asymptotically $\varphi_{n-1}(t) \sim p_{n-1}(t) \mathrm{e}^{-(\lambda+\omega)t}$ with $p_{n-1}(t) \in \mathbb{P}_{n-1}[t]$, the set of polynomials in $t$ of degree $n-1$. The large-time asymptotic leading order term for $\varphi_n$ can be obtained by linearising \eqref{eq:BC} around the $(0,0)$ fixed point in the $(\dot{\varphi}_n, \varphi_n)$-phase plane, which gives the linear initial value problem
    \begin{align*}
        \dot{\varphi_n}(t) + (\lambda+\omega) \varphi_n(t) = \lambda \varphi_{n-1}(t), \quad \varphi_n(0)=1.
    \end{align*}
     Solving and enforcing the initial condition, we obtain
    \begin{align*}
        \varphi_n(t) &= \left(1+ \lambda \int_0^t \varphi_{n-1}(s) \mathrm{e}^{(\lambda+\omega)s} \: ds \right) \hspace{0.1cm} \mathrm{e}^{-(\lambda+\omega)t},
    \end{align*}
    which, when using the inductive hypothesis, shows $\varphi_n(t) \sim p_n(t) \mathrm{e}^{-(\lambda+\omega)t}$. Substituting into the linear initial value problem gives an initial value problem for the polynomial factor
    \begin{align*}
        \dot{p}_n(t) = \lambda p_{n-1}(t), \quad p_n(0)=1, \quad \text{with solution} \quad p_n(t) = 1+\lambda \int_0^t p_{n-1}(s) \: ds.
    \end{align*}
    Then using $p_0(t) = \frac{\lambda+\omega}{\lambda}$ the result follows by recursion.

\subsection{Proof of \Cref{lm: uniform estimates}}\label{proof: uniform estimates}

We note that, from property (iv) of \Cref{lm: u_n properties}, $u_n \in \Cf^1(\R)$ if $n \geq 1$. Hence we restrict ourselves to $n \geq 1$ when considering continuity of $\dot{u}_n$ on $\R$.

    \subsubsection{Proof of \eqref{ineq: uniform bounds}} The first inequality follows from \eqref{eq: u_n definition} and property (i) of \Cref{lm: u_n properties}. For the second inequality we use \eqref{eq: u_n definition}, \eqref{eq: u_n odes}, and properties (i),(ii) of \Cref{lm: u_n properties}, then for $n \geq 1$ we compute
    \begin{align*}
         |\dot{u}_n(t)| = 0 \text{ if } t \leq -t_n, \quad \text{and} \quad |\dot{u}_n(t)| < \lambda+\frac{\omega}{4} \text{ if } t>-t_n,
    \end{align*}
    and the second inequality of \eqref{ineq: uniform bounds} then follows.
    \smallskip
    
    \subsubsection{Proof of \eqref{eq: u_n tail estimate t=+infty} and \eqref{eq: u_n tail estimate t=-infty}}
    From \eqref{eq: u_n odes} and property (ii) of \Cref{lm: u_n properties}, we compute
    \begin{align}\label{eq: dotu_n <= logistic}
         \dot{u}_n(t)
        &\leq - \omega u_n(t)(1-u_n(t)), \quad t \geq -t_n, \: n\geq 0.
    \end{align}
    Consider the initial value problem 
    \begin{align*}
         \dot{f}(t) = -\omega f(t) (1 - f(t)), \quad f(0)=p, 
         \end{align*}
       and its solution  
         \begin{align*}
         \quad f(t) = \frac{p}{p+(1-p)\e^{\omega t}}, \: t \in \R.
    \end{align*}
    It follows that
    \begin{align}\label{ineq: u_n comparison ode logistic}
        u_n(0)=f(0)=p, \quad \dot{u}_n(t) \leq \dot{f}(t), \quad t \geq -t_n, \: n \geq 0.
    \end{align}
    By the comparison principle for ODEs it follows from \eqref{ineq: u_n comparison ode logistic} that
    \begin{align}
        &0 \leq u_n(t) \leq f(t), \quad t \geq 0, \: n \geq 0, \label{eq: 0<=u_n<=f t>=0}\\
        &1 \geq u_n(t) \geq f(t), \quad t \leq 0, \: n \geq 0, \label{eq: 1>=u_n>=f t<=0}
    \end{align}
    where we used \eqref{eq: u_n definition} to extend the domain of the inequality \eqref{eq: 1>=u_n>=f t<=0} from $-t_n \leq t \leq 0$ to $t \leq 0$. 
    Taking the supremum over $n \geq 0$ of \eqref{eq: 0<=u_n<=f t>=0} and using $f(t) = \order(\e^{-\omega t})$ as $t \to +\infty$ gives the result \eqref{eq: u_n tail estimate t=+infty}. The inequality \eqref{eq: 1>=u_n>=f t<=0} can be rewritten as
    \[
    0 \leq 1-u_n(t) \leq 1 - f(t), \quad t \leq 0, \: n \geq 0,
    \]
    then taking the supremum over $n \geq 0$ and using $1 - f(t) = \order(\e^{\omega t})$ as $t \to -\infty$ gives the result \eqref{eq: u_n tail estimate t=-infty}.
    \smallskip

    \subsubsection{Proof of \eqref{eq: u_n^(j) tail estimate t=+-infty}}
    From \eqref{eq: u_n odes} and properties (i), (ii) of \Cref{lm: u_n properties}, we compute
    \begin{align}
        |\dot{u}_n(t)| &\leq (\lambda+\omega)|u_n(t)|, \quad t \in \R, \: n \geq 1, \label{ineq: sup|dtu_n| t+infty}\\
        |\dot{u}_n(t)| &\leq \lambda |1-u_n(t)| + \lambda |1 - u_{n-1}(t+\tau_n)| + \omega |1-u_n(t)|, \quad t \in \R, \: n \geq 1, \label{ineq: sup|dtu_n| t-infty}
    \end{align}
    where we used \eqref{eq: u_n definition} to extend the domain of definition of \eqref{ineq: sup|dtu_n| t+infty} and \eqref{ineq: sup|dtu_n| t-infty} from $t \geq -t_n$ to $t \in \R$.
    Combining \eqref{ineq: sup|dtu_n| t+infty} with the uniform estimate \eqref{eq: u_n tail estimate t=+infty} gives the result for $t \to \infty$. Using the bound in \Cref{lm: hittingtimes rel compact} and property (iii) of \Cref{lm: u_n properties}, we compute
    \begin{align*}
        1 - u_{n-1}(t+\tau_n) \leq 1 - u_{n-1}(t+T_p), \quad t \in \R, \: n \geq 1,
    \end{align*}
    where $T_p>0$ is constant. Using this inequality, we extend the inequality \eqref{ineq: sup|dtu_n| t-infty}, and then combining with
    the uniform estimate \eqref{eq: u_n tail estimate t=-infty} gives the result for $t \to -\infty$.
    \smallskip
    
    \subsubsection{Proof of \eqref{ineq: uniform dt upper bound}} For $r>0$, there exists $N_r>0$ such that $-t_n<-r$ for all $n \geq N_r$, which follows from \Cref{lm: hittingtimes rel compact}. From property (iii) of \Cref{lm: u_n properties}, we infer that $u_n$ is one-to-one on the closed interval $|t| \leq r$ for all $n \geq N_r$. From property (ii) of \Cref{lm: u_n properties}, we infer that $u_n(t)-u_{n-1}(t+\tau_n)>0$ on $|t| \leq r$ for all $n \geq N_r$. Hence, from \eqref{eq: u_n odes}, we compute
    \begin{align*}
        \sup_{n \geq N_r} \sup_{|t|\leq r} \dot{u}_n(t)
        &\leq \sup_{n \geq N_r} \sup_{|t|\leq r} - \omega u_n(t) (1 - u_n(t)) \\
        &=  \sup_{n \geq N_r} \sup_{u_n(r)\leq f\leq u_n(-r)} - \omega f (1 - f) \\
        &= \sup_{u_{N_r}(r)\leq f \leq u_{N_r}(-r)} - \omega f (1 - f)\\
        &= \max_{f \in \{ u_{N_r}(r), \: u_{N_r}(-r)\}} -\omega f(1-f) := -C_r,
    \end{align*}
    where we used that fact that $u_n$ is one-to-one on $|t| \leq r$ for all $n \geq N_r$ to justify the second line; \Cref{lm: u_n pointwise monotone} for the third line; and the maximum principle for a convex function for the last line. From \Cref{lm: u_n pointwise monotone} and the definition of $N_r$, it follows that $0<u_{N_r}(r) < p < u_{N_r}(-r)<1$,  and therefore $C_r>0$.

\end{appendix}

\begin{acks}[Acknowledgments]
We warmly thank Bernard Derrida for inspiring discussions. 
\end{acks}

\begin{funding}
The authors are grateful for generous supported by the Australian Research Council. CG is supported by the Fay Marles scholarship.
\end{funding}

\bibliographystyle{imsart-number}
\bibliography{Library}

@article{peter_blockchains_2018,
	title = {Block arrivals in the Bitcoin blockchain},
	author = {R. Bowden and H.P. Keeler and A.E. Krzesinski and P.G. Taylor},
    journal = {Stochastic Models},
    volume = {36},
	issue = {},
    pages = {602--637},
    numpages = {},
    year = {2020},
    month = {},
    publisher = {},
    doi = {},
    url = {},
}

@article{peter_blockchains_2015,
	title = {Bitcoin Blockchain Dynamics: the Selfish-Mine Strategy in the Presence of Propagation Delay},
	author = {Johannes Göbel and Paul Keeler and Anthony E. Krzesinski and Peter G. Taylor},
    journal = {Performance Evaluation},
    volume = {104},
	issue = {},
    pages = {23--41},
    numpages = {},
    year = {2016},
    month = {},
    publisher = {},
    doi = {},
    url = {},
}

@article{BarbourLuczak_2008,
	title = {Laws of Large Numbers for Epidemic Models with Countably Many Types},
	author = {A. D. Barbour and M. J. Luczak},
    journal = {The Annals of Applied Probability},
    volume = {18},
	issue = {6},
    pages = {2208--2238},
    numpages = {},
    year = {2008},
    month = {Dec},
    publisher = {Institute of Mathematical Statistics},
    doi = {},
    url = {http://www.jstor.org/stable/25442711}
}

@article{Kurtz_1980,
	title = { Relationships between stochastic and deterministic population models},
    author = {Kurtz, T},
    journal = {Biological Growth and Spread. Lecture Notes in Biomathematics},
    volume = {38},
	issue = {},
    pages = {449–467},
    numpages = {19},
    year = {1980},
    month = {},
    publisher = {Springer, Berlin, Heidelberg},
    doi = {https://doi.org/10.1007/978-3-642-61850-5_39},
    url = {https://link.springer.com/chapter/10.1007/978-3-642-61850-5_39#citeas}
}

@article{Stolyar_1996,
	title = {Asynchronous updates in large parallel systems},
	author = {Greenberg, Albert G. and Shenker, S. and Stolyar, Alexander L.},
	journal = {SIGMETRICS Perform. Eval. Rev.},
    volume = {24},
	issue = {1},
    pages = {91–103},
    numpages = {13},
    year = {1996},
    month = {May},
    publisher = {Association for Computing Machinery},
    doi = {10.1145/233013.233028},
    url = {https://doi.org/10.1145/233013.233028}
}

@article{Stolyar_2023_a,
    title={Large-scale behavior of a particle system with mean-field interaction: traveling wave solutions},
    author={Stolyar, Alexander L},
    journal={Advances in Applied Probability},
    volume={55},
    number={1},
    pages={151--186},
    year={2023},
    publisher={Cambridge University Press},
    doi={10.1017/apr.2022.24},
    url={https://doi.org}
}

@article{Stolyar_2023_b,
    title={A Particle System with Mean-Field Interaction: Large-Scale Limit of Stationary Distributions},
    author={Stolyar, Alexander L.},
    journal={Stochastic Systems},
    volume={13},
    number={3},
    pages={343--359},
    year={2023},
    publisher={INFORMS},
    doi={10.1287/stsy.2023.0108},
    url={https://pubsonline.informs.org/doi/abs/10.1287/stsy.2023.0108}
}

@article{Stolyar_2025,
	title = {A large-scale particle system with independent jumps and distributed synchronization},
	author = {Baryshnikov, Y and Stolyar, A},
	journal = {Advances in Applied Probability},
    volume = {57},
	issue = {2},
    pages = {677-707},
    numpages = {31},
    year = {2025},
    month = {June},
    publisher = {Cambridge University Press},
    doi = {10.1017/apr.2024.53},
    url = {https://www.cambridge.org/core/journals/advances-in-applied-probability/article/3AB39594D50EDC2B1B4DB685A9076877}
}

@article{LambertWfunction_1996,
	title = {On the {Lambert W} function},
    author = {Corless, Robert M and Gonnet, Gaston H and Hare, David EG and Jeffrey, David J and Knuth, Donald E},
	journal = {Advances in Computational Mathematics},
    volume = {5},
	issue = {1},
    pages = {329--359},
    numpages = {},
    year = {1996},
    month = {},
    publisher = {Springer},
    doi = {https://doi.org/10.1007/BF02124750},
    url = {https://link.springer.com/article/10.1007/BF02124750#citeas}
}

@article{MathewsEsrick_2022,
	title = {A physicist’s guide to the solution of Kummer’s equation and confluent hypergeometric functions},
	author = {W. N. MathewsJr and M. A. Esrick and Z. Y. Teoh and J. K. Freericks},
	journal = {Condensed Matter Physics},
    volume = {25},
	issue = {3},
    pages = {1-23},
    numpages = {23},
    year = {2022},
    month = {Oct},
    publisher = {Creative Commons Attribution 4.0 International License.},
    doi = {10.5488/CMP.25.33203 },
    url = {https://icmp.lviv.ua/journal/zbirnyk.111/33203/abstract.html}
}

@article{AbramowitzStegun_1970,
	title = {Handbook of Mathematical Functions with Formulas, Graphs, and Mathematical Tables},
    author = {Abramowitz, M and Stegun, I},
    journal = {},
    volume = {},
	issue = {},
    pages = {},
    numpages = {},
    year = {1970},
    month = {},
    publisher = {United States Department of Commerce, National Bureau of Standards (NBS)},
    doi = {},
    url = {},
    isbn = {0-486-61272-4}
}

@article{Rudin_1976,
	title = {Principles of Mathematical Analysis},
    author = {Rudin, Walter},
    journal = {},
    volume = {},
	issue = {},
    pages = {},
    numpages = {},
    year = {1976},
    month = {},
    publisher = {McGraw-Hill},
    doi = {},
    url = {},
    isbn = {}
}

@article{Bramson_1983,
	title = {Convergence of solutions of the Kolmogorov equation to travelling waves},
	author = {Bramson, M},
    journal = {Memoirs of the American Mathematical Society},
    volume = {44},
	issue = {},
    pages = {0-0},
    numpages = {},
    year = {1983},
    month = {},
    publisher = {},
    doi = {10.1090/MEMO/0285},
    url = {https://api.semanticscholar.org/CorpusID:123038077},
}

@article{Bramson_1978,
	title = {Maximal displacement of branching brownian motion},
	author = {Bramson, M},
    journal = {Communications on Pure and Applied Mathematics},
    volume = {31},
	issue = {},
    pages = {531-581},
    numpages = {50},
    year = {1978},
    month = {},
    publisher = {},
    doi = {10.1002/CPA.3160310502},
    url = {https://api.semanticscholar.org/CorpusID:123176293},
}

@article{DumortierPopovicKaper_2007,
      title={The critical wave speed for the Fisher–Kolmogorov–Petrowskii–Piscounov equation with cut-off},
      author={Freddy Dumortier, Nikola Popović, Tasso J. Kaper},
      journal={Nonlinearity},
      volume={20},
      number={4},
      pages={855–877},
      year={2007},
      publisher={},
      doi={10.1088/0951-7715/20/4/004}
}

@article{MuellerMytnikQuastel_2011,
      title={Effect of noise on front propagation in reaction-diffusion equations of KPP type},
      author={Mueller, C and Mytnik, L and Quastel, J},
      journal={Inventiones Mathematicae},
      volume={184},
      number={},
      pages={405–453},
      year={2011},
      publisher={Springer},
      doi={https://doi.org/10.1007/s00222-010-0292-5}
}

@article{Berard_2008,
      title={An example of {B}runet--{D}errida behavior for a branching-selection particle system on $\mathbb{Z}$},
      author={B{\'e}rard, Jean},
      journal={},
      volume={},
      number={},
      pages={},
      year={2008},
      publisher={},
      doi={https://doi.org/10.48550/arXiv.0810.5567}
}

@article{BerardGourere_2010,
      title={Brunet-Derrida Behavior of Branching-Selection Particle Systems on the Line},
      author={B{\'e}rard, Jean and Baptiste-Gour{\'e}r{\'e}, Jean},
      journal={Commun. Math. Phys.},
      volume={298},
      number={},
      pages={323-342},
      year={2010},
      publisher={Springer},
      doi={https://doi.org/10.1007/s00220-010-1067-y}
}

@article{BenguriaDepassier_2007,
      title={On the speed of pulled fronts with a cutoﬀ.},
      author={Benguria, R. D. and Depassier, M. C.},
      journal={Phys. Rev. E},
      volume={75},
      number={5},
      pages={},
      year={2007},
      publisher={American Physical Society},
      doi={https://doi.org/10.1103/PhysRevE.75.051106}
}

@article{BenguriaDepassierLoss_2008,
      title={Upper and lower bounds for the speed of pulled fronts with a cut-off},
      author={Benguria, Rafael and Depassier, M.Cristina and Loss, M.},
      journal={European Physical Journal B},
      volume={61},
      number={},
      pages={331–334},
      year={2008},
      publisher={},
      doi={https://doi.org/10.1140/epjb/e2008-00069-1}
}

@article{discreteFKPP_BramsonCorrection_2023,
      title={The logarithmic Bramson correction for
Fisher-KPP equations on the lattice $\mathbb{Z}$},
      author={C. Besse and G. Faye and J.-M. Roquejoffre
and M. Zhang},
      journal={Transactions of the
American Mathematical Society},
      volume={376},
      number={12},
      pages={8553–8619},
      year={2023},
      publisher={American Mathematical Society},
      doi={https://doi.org/10.1090/tran/9007}
}

@article{BRW_BramsonCorrection_2009,
      title={Minima in branching random walks},
      author={L. Addario-Berry and B. Reed},
      journal={The Annals of Probability},
      volume={37},
      number={3},
      pages={1044-1079},
      year={2009},
      publisher={Institute of Mathematical Statistics},
      doi={https://doi.org/10.1214/08-AOP428}
}

@article{BRW_BramsonCorrection_2013,
      title={Convergence in Law of the Minimum of a Branching Random Walk},
      author={E. A{\"i}d{\'e}kon},
      journal={The Annals of Probability},
      volume={41},
      number={3A},
      pages={1362–1426},
      year={2013},
      publisher={Institute of Mathematical Statistics},
      doi={https://doi.org/10.1214/12-AOP750}
}

@article{HamelNolen_2013,
      title={A short proof of the logarithmic Bramson correction in Fisher-KPP equations},
      author={Hamel, Fran{\c{c}}ois and Nolen, James and Roquejoffre, Jean-Michel and Ryzhik, Lenya},
      journal={Networks \& Heterogeneous Media},
      volume={8},
      number={1},
      pages={275--289},
      year={2013},
      publisher={American Institute of Mathematical Sciences},
      doi={10.3934/nhm.2013.8.275}
}

@article{BrunetDerrida_1997,
  title = {Shift in the velocity of a front due to a cutoff},
  author = {Brunet, Eric and Derrida, Bernard},
  journal = {Phys. Rev. E},
  volume = {56},
  issue = {3},
  pages = {2597--2604},
  numpages = {0},
  year = {1997},
  month = {Sep},
  publisher = {American Physical Society},
  doi = {10.1103/PhysRevE.56.2597},
  url = {https://link.aps.org/doi/10.1103/PhysRevE.56.2597}
}

@article{BrunetDerrida_1999,
  title = {Microscopic models of traveling wave equations},
  author = {Brunet, Eric and Derrida, Bernard},
  journal = {Computer Physics Communications},
  volume = {121--122},
  issue = {},
  pages = {376--381},
  numpages = {},
  year = {1999},
  month = {},
  publisher = {Elsevier},
  doi = {https://doi.org/10.1016/S0010-4655(99)00358-6},
  url = {https://doi.org/10.1016/S0010-4655(99)00358-6}
}

@article{BrunetDerrida_2001,
  title = {Effect of Microscopic Noise on Front Propagation},
  author = {Brunet, Eric and Derrida, Bernard},
  journal = {Journal of Statistical Physics},
  volume = {103},
  issue = {},
  pages = {269–-282},
  numpages = {},
  year = {2001},
  month = {},
  publisher = {Springer},
  doi = {https://doi.org/10.1023/A:1004875804376},
  url = {https://doi.org/10.1023/A:1004875804376}
}

@article{AblowitzZeppetella_1979,
	title = {Explicit solutions of Fisher's equation for a special wave speed},
	author = {Ablowitz, M and Zeppetella, A},
    journal = {Bulletin of Mathematical Biology},
    volume = {41},
	issue = {},
    pages = {835-840},
    numpages = {6},
    year = {1979},
    month = {Nov},
    publisher = {Springer},
    doi = {10.1007/BF02462380},
    url = {https://link.springer.com/article/10.1007/BF02462380},
}

@article{AronsonWeinberger_1978,
	title = {Multidimensional nonlinear diffusion arising in population genetics},
	author = {Ablowitz, M and Zeppetella, A},
    journal = {Advances in Mathematics},
    volume = {30},
	issue = {1},
    pages = {33-76},
    numpages = {6},
    year = {1978},
    month = {},
    publisher = {},
    doi = {10.1016/0001-8708(78)90130-5},
    url = {https://www.sciencedirect.com/science/article/pii/0001870878901305},
}

@article{DeeLanger_1983,
  title = {Propagating Pattern Selection},
  author = {Dee, G. and Langer, J. S.},
  journal = {Phys. Rev. Lett.},
  volume = {50},
  issue = {6},
  pages = {383--386},
  numpages = {0},
  year = {1983},
  month = {Feb},
  publisher = {American Physical Society},
  doi = {10.1103/PhysRevLett.50.383},
  url = {https://link.aps.org/doi/10.1103/PhysRevLett.50.383}
}

@article{ColletEckmann_1990,
  title = {Instabilities and Fronts in Extended Systems},
  author = {P. Collet and J.-P. Eckmann},
  journal = {Princeton Series in Physics},
  volume = {},
  issue = {},
  pages = {},
  numpages = {0},
  year = {1990},
  month = {Jun},
  publisher = {Princeton University Press, Princeton},
  doi = {},
  url = {https://press.princeton.edu/books/hardcover/9780691636177/instabilities-and-fronts-in-extended-systems?srsltid=AfmBOop9KjZGahQjovdCBmnx-MhGnS-iGz1Zaaw-C5I7EKspN6lpfJJN}
}

@article{vanSaarloosWim_1987,
  title = {Dynamical Velocity Selection:
Marginal Stability},
  author = {van Saarloos, Wim},
  journal = {Phys Rev Lett.},
  volume = {58},
  issue = {12},
  pages = {2571–2574},
  numpages = {0},
  year = {1987},
  month = {Jun},
  publisher = {},
  doi = {10.1103/PhysRevLett.58.2571},
  url = {https://pubmed.ncbi.nlm.nih.gov/10034786/}
}

@article{vanSaarloosWim_1988,
  title = {Front propagation into unstable states: Marginal stability as a dynamical mechanism for velocity selection},
  author = {van Saarloos, Wim},
  journal = {Phys. Rev. A},
  volume = {37},
  issue = {1},
  pages = {211--229},
  numpages = {0},
  year = {1988},
  month = {Jan},
  publisher = {American Physical Society},
  doi = {10.1103/PhysRevA.37.211},
  url = {https://link.aps.org/doi/10.1103/PhysRevA.37.211}
}

@article{vanSaarloosWim_1989,
  title = {Front propagation into unstable states. II. Linear versus nonlinear marginal stability and rate of convergence},
  author = {van Saarloos, Wim},
  journal = {Phys. Rev. A},
  volume = {39},
  issue = {12},
  pages = {6367--6390},
  numpages = {0},
  year = {1989},
  month = {Jun},
  publisher = {American Physical Society},
  doi = {10.1103/PhysRevA.39.6367},
  url = {https://link.aps.org/doi/10.1103/PhysRevA.39.6367}
}

\end{document}